\documentclass[12pt]{amsart}
\usepackage{graphicx} 
\usepackage[top=3cm, bottom=5cm, left=3cm, right=3cm]{geometry}

\usepackage[english]{babel}
\usepackage{amsmath}
\usepackage{amsthm}
\usepackage{amsfonts}
\usepackage{graphicx}
\usepackage{tikz-cd}
\usepackage[colorlinks=true, allcolors=black]{hyperref}
\usepackage{comment}
\usepackage{mathtools}
\usepackage[normalem]{ulem}
\newtheorem{theorem}{Theorem}[section]
\newtheorem{lemma}[theorem]{Lemma}
\newtheorem{prop}[theorem]{Proposition}

\newtheorem{cor}[theorem]{Corollary}
\newtheorem{conj}[theorem]{Conjecture}
\theoremstyle{definition}
\newtheorem{definition}[theorem]{Definition}
\newtheorem{remark}[theorem]{Remark}
\newtheorem{remarks}[theorem]{Remarks}

\newtheorem{examples}[theorem]{Examples}

\numberwithin{equation}{section}

\newcommand{\bZ}{{\mathbb Z}}
\newcommand{\bR}{\mathbb R}
\newcommand{\bC}{{\mathbb C}}

\newcommand{\ra}{\rightarrow}
\newcommand{\lr}{{\langle.,. \rangle}}

\title{Polarization complete left invariant  connections}
\author{Balázs Forman}
\address{ELTE - Eötvös L. Univ., Dept. of Anal.,
 P\'azm\'any P\'eter s\'et\'any 1/C, 1117, Budapest, Hungary}
\email{formanbalazsattila@gmail.com}

\author{Róbert Szőke}
\address{ELTE - Eötvös L. Univ., Dept. of Anal.,
 P\'azm\'any P\'eter s\'et\'any 1/C, 1117, Budapest, Hungary}
\email{rszoke2009@gmail.com}
\subjclass[2020]{32Q99,22E99,53C22,53C05,34M99}
\thanks{The second author was supported by ERC Advanced Grant KnotSurf4d}
\keywords{Adapted complex structures, Grauert tubes, polarizations, left invariant connections}
\subjclass[2020]{32Q99, 22E99, 53C22, 53C05, 34M99}

\begin{document}
\raggedbottom
\begin{abstract}
Let $(M,\nabla)$ be a real analytic Koszul manifold. An 
 adapted complex structure (ac-structure)
on a neighborhood $N$ of the zero section in $TM$ is a complex structure on $N$ such that the leaves of the Levi-Civita foliation are holomorphic curves. More generally, a complex polarization $P$ on $N$  is called an ac-polarization if the leaves of the Levi-Civita foliation are tangential to $P$. The bundle of (1,0) tangent vectors of an ac-structure is an ac-polarization. 
The connection is called entire (resp. polarization complete or simply  $\mathcal P$-complete) if the ac-structure (resp. the ac-polarization) exists on $TM$. 
Although on a small enough $N$ 
an ac-structure always exists, entire connections are rear and if a maximal domain of definition $N_{max}$ of an ac-structure exists (different from $TM$), $N_{max}$ is a complicated  domain. On the other hand in many cases the associated  ac-polarization can be extended to the whole $TM$. The main purpose of the paper is to gain better understanding of this phenomenon using a generalized version of the polar map. As special cases we show that many of those  metrics studied by Aslam-Burns-Irvine and Halverscheid-Iannuzzi although  are not entire but are  $\mathcal P$-complete.
\end{abstract}

\maketitle
\section{Introduction}
Every real-analytic manifold $M$ admits a complexification, i.e. a complex manifold $Z$ into which   $M$ embeds as a   maximally totally real submanifold  (\cite{WB59}, \cite{Sh58}. $Z$ is far from being unique, complexifications exist with  quite different topology. Only the germ of $Z$ along $M$  is unique.
 If a real analytic metric $g$ is given on $M$, one gets a canonical complex structure, called {\it adapted complex structure} (or ac-structure for short) on some open neighborhood $N$ of the zero section in $TM$.
 The compatibility condition imposed on $g$ is that the leaves of the Riemann foliation of $TM\setminus M$ should be holomorphic curves (more precisely the intersection of each leaf with $N$). 
 Uniqueness of such complex structures was shown in \cite{LSz91} and existence in \cite{Sz91}.
 Important  properties of the ac-structure
 are (in fact these properties were the main motivations ): the energy function $E(v)=g(v,v)/2$ is strictly plurisubharmonic, $u=\sqrt{E}$ is plurisubharmonic and satisfies the homogeneous complex Monge-Amp\`ere (HCMA) equation on $N\setminus M$.  \cite{LSz91}. 
 A different but equivalent construction was given in \cite{GS91}  where $N$ was not in $TM$ but rather in $T^*M$. 
 
  The metric $g$ is said to be $\it entire$, if the ac-structure can be defined on $TM$. Compact manifolds with entire $g$ are rare, and notoriously difficult to construct. This is a very strong property of the metric.
  A necessary condition (\cite{LSz91} is that the sectional curvatures of $g$ must be nonnegative (for this to hold in fact completeness of $g$ suffices), but it is far from sufficient. A 2-parameter family of entire metrics was constructed on $S^2$ in \cite{Sz91}, but it is still not known if they are all. Other properties showing that entire metrics are very special: the geodesic flow has zero entropy ($M$ compact, orientable) \cite{SS24}, $M$ is rationally elliptic, hence  $\chi(M)\ge0$ ($M$ compact, simply connected) \cite{Ch24}.
   A related conjecture of Aguilar-Burns says that for $(M,g)$ compact entire, $(TM,J_{ac})$ should be affine algebraic.
  The first examples  of entire metrics were given by Patrizio and Wong in \cite{PW91}
  (although their terminology was different), where they showed that 
  every compact rank-1 symmetric space- metric is entire. This was generalized in \cite{Sz91}
  to compact symmetric spaces, and later in \cite{Sz98} to all compact, normal Riemannian homogeneous metrics.  A further generalization was given by
   Aguilar \cite{Ag01}, who proved  that the quotient of a compact $M$ with entire metric w.r.t. a compact Lie subgroup of the isometry group is again entire (assuming the quotient is smooth).
   Finally Lempert in \cite{L19b} showed that the submersion of an entire metric is again entire. Products of entire metrics are of course again entire.
 It is striking that most of the constructions that produces nonnegatively curved metrics also produces entire metrics. Therefore it is tempting to ask if the reverse is true. Namely if a compact manifold admits a metric with nonnegative curvature, does it follow that it also admits an entire metric? We are not aware of any result in this direction. There is a related conjecture of Totaro \cite{T03} on good complexifications.
   
  A necessary and sufficient condition for $g$ to be entire, was given in  \cite{Sz91} in terms of certain matrices obtained from Jacobi fields along an arbitrary geodesic (in the surface case it says: the quotient of two normal Jacobi field should extend meromorphically to $\bC$ with positive imaginary part in the upper half plane). This was  enough to conclude that compact symmetric spaces are entire, but the condition is very difficult to check in concrete examples.
  A different method was used in \cite{Sz95}, \cite{Sz98}, where the existence of the universal complexification $G_\bC$ of a compact Lie group was exploited and shown
  that the pull back of the complex structure of $G_\bC$ (resp. of $G_\bC/K_\bC$) by the  polar map $TG\ra G_\bC$ (resp. $T(G/K)\ra G_\bC/K_\bC$) (known to be a diffeomorphism in these cases)  is adapted to the biinvariant (resp. normal homogeneous) metric.

  Although in \cite{LSz91}, \cite{Sz91} 
 $M$ was compact and $N$  a disk subbundle in $TM$ with fixed  radius, it was clear at the beginning that
 uniqueness of ac-structures
could be proved in more general situations and that they extend (in certain cases) to more general domains.
What was not (and it is still not) clear, how to define some kind of maximal domain of definition where the ac-structure lives. A domain $N\subset TM$ is called a $*$-domain, if for all $v\in N$ and $0\le s<1$, $sv\in N$.
Intersection and union of $*$-domains are again $*$-domains and  uniqueness of ac-structure holds for $*$-domains. Hence there always exists a maximal $*$-domain $N_*\subset TM$ where the ac-structure exists, but in principle an ac-structure could live on an even larger domain.  
   It could even happen (although we have no explicit example of this kind) that analytic continuation of $J^{ac}:T_vN_*\ra T_vN_*$
along different curves starting at $v$ and ending
at $w\in TM\setminus N_*$ yields different complex structures in a neighborhood of $w$.
In any case, $N_*$ can have very complicated boundary and can behave  quite badly from a complex geometric point of view. It can be determined explicitly only for very special metrics.
For a Riemannian symmetric space of noncompact type $N_*$ is Stein and it is biholomorphic to the Akhiezer-Gindikin domain \cite{BHH03}. On the other hand
 for  a   left invariant Riemannian metric on the 3-dimensional Heisenberg group and on generalized Heisenberg groups $N_*$ is neither holomorphically separable nor holomorphically convex \cite{HI03}. 
 
 After some time it became clear that  positivity of the metric is not important,
  one could take pseudo-Riemannian metrics or simply any real analytic Koszul connections,  the notion of ac-structure naturally generalizes  \cite{Bi03}, \cite{Sz04}. We no longer have a Monge-Amp\`ere solution though, only the characteristic foliation survives.
  For $*$-domains we still have uniqueness and  existence (on a small enough domain). 
  In \cite{HI06} and \cite{HI09} the ac-structure of a 1-parameter family of left invariant pseudo-Riemannian metrics 
  on semisimple Lie groups  was studied. Their  metrics were  deformations of the Killing form. They gave a group theoretical description of $N_*$ in each case, again using a version of the polar map.
In the case of $SL(2,\bR)$ they demonstrated the highly complicated and parameter dependent complex geometric  behavior of $N_*$  in great details.  

 The $T^{1,0}N$ tangent bundle of an ac-structure that originally is defined on a maximal possible domain $N\subset TM$ in certain cases can be extended across the boundary of $N$ as an involutive bundle.  This phenomenon was observed in \cite{Sz01} for  Riemannian symmetric spaces of noncompact type, and later for affine locally symmetric Koszul connections  \cite{Sz04}. In fact in these cases the extended bundle will be defined over $TM$. We shall say that a Koszul connection is $\mathcal P$-complete if its ac-structure behaves this way. 

 One of the main purpose of this paper to demonstrate that the examples mentioned above in \cite{HI03}, \cite{HI06}, \cite{HI09},
 some of the metrics on $SU(2)$ investigated in \cite{ABI18} and many more are although not entire but are $\mathcal P$-complete. 
 In this paper we study  ac-structures with the help of a generalized version of polar maps, inspired by the paper \cite{L19a} of Lempert.  For a Koszul manifold $(M,\nabla)$, complex manifold $Z$ and smooth map $f:M\ra Z$, denote by $N_i(f,Z)\subset TM$ the set of those tangent vectors $v$, for which the $\nabla$-geodesic $\gamma_v$, with initial condition $\gamma'_v(0)=v$ extends holomorphically to some neighborhood of $[0,i]\subset\bC$. Denoting by $\gamma^\bC_v$ this extension, the polar map is defined as
 $\psi_f:N_i(f,Z)\ra Z$, $v\mapsto \gamma^\bC_v(i)$.
As it is explained in Subsection~\ref{Ss:motiv}, these are intimately related to an ac-structure (or ac-polarization) on $N_i(f,Z)$. We would like to choose $Z$ and $f$, so that $N_i(f,Z)$ is as large as possible. 

 The organization of the paper as follows.
 In Sect.2 we present the main definitions.
 In Sect.3 Theorem~\ref{T:pol} we show that a holomorphic foliation on a complex manifold induces a natural involutive bundle on any maximally totally real submanifold. As an application in Theorem~\ref{T:subm} we 
 prove that a holomorphic submersion $X\ra Z$
 and a maximallly totally real immersion $\varphi:N\ra X$ induces an involutive bundle on $N$. These results will be used to prove one of the main result,  Theorem~\ref{T:polarization}.
 In Sect.4 we collect some known and less known facts on flows of real or holomorphic vector fields, in particular on holomorphic geodesic flows of holomorphic Koszul connections. 
 
 In Sect.5 we introduce the notion of complexification of a real-analytic Koszul manifold $(M,\nabla)$, as a triple $(Z,\mathfrak D,\iota)$, where $(Z,\mathfrak D)$ is a holomorphic Koszul manifold, $\iota:M\ra Z$ a maximally totally real immersion with $\nabla=\iota^*{\widehat\nabla}$, where $\widehat\nabla$ is the induced real connection (by $\mathfrak D$) on $Z$. 
 Here our main result is Theorem~\ref{T:polarization}, which says that with the help of the polar map $\psi_\iota:N_i(Z,\iota)\ra Z$ of a complexification,  $P:=\psi_\iota^*(T^{1,0}Z)$ yields an ac-polarization 
 on $N_i(Z,\iota)$.
As a corollary we obtain Theorem~\ref{T:icompl}: if $(M,\nabla)$  admits an $i\bR$-complete complexification, then $\nabla$ is $\mathcal P$-complete. $i\bR$-completeness means that the curves $\iota\circ\gamma_v$ in $Z$ extends to a holomorphic map defined in some neighborhood of $i\bR$ for $\forall v\in TM$.

In Sect.6 we study left invariant Koszul connections both for real, as well as complex Lie groups. We extend Arnold's method of reducing the geodesic equation to the problem of finding integral curves of a vector field defined on the Lie algebra.

In Sect.7 we prove another important theorem (Theorem~\ref{T:eta-polarization}):  for every Lie group $G$ and left invariant Koszul connection $\nabla$, $N_i(\eta,G_\bC)$ is open in $TG$, where
$\eta:G\ra G_\bC$ is the universal complexification. Let  $\psi_\eta$ be the polar map. Then   $P:=(\psi_\eta)_*^{-1}(T^{1,0}G_\bC)$ defines an ac-polarization. In particular if the Euler-Arnold vector field is $i\bR$-complete, $\nabla$ will be $\mathcal P$-complete (Theorem~\ref{T:nabla-i-complete}). A corollary  of the latter theorem, is that the canonical connection of every Lie group is $\mathcal P$-complete, although not always entire. Another corollary is the fact, that a certain 1-parameter family of  left invariant (pseudo)-Riemannian metrics, in the Riemannian case  studied in \cite{ABI18}, are all $\mathcal P$-complete, (see Theorem~\ref{T:SU(2)} and Remarks~\ref{r:SU(2)} for  more precise statements).

 Sect.8 is a short reminder of biinvariant connections. In Sect.9 we give an explicit formula for all biinvariant connections  on an $\mathcal F$-type Lie group and show that they are all $\mathcal P$-complete. We also prove that on such groups all left invariant pseudo-Riemann metrics are $\mathcal P$-complete, but their Euler-Arnold vector field is not always $i\bR$-complete.
 Sect.10 is on 2-step nilpotent Lie groups. Here we give  a large family of $\mathcal P$-complete biinvariant connections. For certain 2-step Lie groups we can prove that these are all the biinvariant connections. We also prove that on a complex 2-step nilpotent Lie group every leftinvariant holomorphic Riemann metric is $\bC$-complete. As a corollary of this is:  on a real 2-step nilpotent Lie group all left invariant pseudo-Riemannian metric is $\mathcal P$-complete. As an application  we get that  the metrics studied by Halverscheid-Iannuzzi in \cite{HI03} on the 3-dimensional Heisenberg group and on generalized Heisenberg groups are not entire but all are $\mathcal P$-complete.
 In Sect. 11 we study Lie groups whose Lie algebra admits a reductive decomposition.
 We give a family of left invariant connections, a 1-parameter deformation of the canonical connection, whose polar maps can be given very explicitly and are all $\mathcal P$-complete (Theorem~\ref{T:reddef}). As a corollary we obtain that the 1-parameter metric deformation of the Killing form on semisimple Lie groups studied by Halverscheid-Iannuzzi in \cite{HI06}, \cite{HI09} are all $\mathcal P$-complete, although not all are entire (Theorem~\ref{T:Cheegerdeformmetriccompl} and Subsect.~\ref{Ss:semisimple}).
 In Sect.12 we collected certain calculations on vector fields that we used throughout the paper. 
 
\noindent \textbf{Acknowledgement.} Parts of this work was done while RSz
was visiting the Rényi Institute and Erdős Center. He would like to thank for these institutions for their financial support, hospitality and very inspiring scientific atmosphere. 
\section{Adapted complex and involutive structures}
\subsection{Notations}\label{Ss:notations}
For  $I=(a,b)$, $a<0<b$ let
\begin{equation}\label{E:ueps}
\bC I=\{z=t+is\in\mathbb C \mid \Re z=t\in I\}.
\end{equation}
Let $M$ be a smooth manifold.
We shall denote by  $N\subset TM$  an open, fiberwise $*$-domain (i.e. $v\in N$ and $0\le s\le1$ implies $sv\in N$)  with $M\subset N$.
Let $\nabla$ be a  Koszul connection  on $TM$. 
For $v\in TM$ denote by 
$\gamma_v:I\ra M$  the unique geodesic
with initial condition $\dot\gamma_v(0)=v$. Define
\begin{equation}\label{F:Omegav}
\begin{aligned}
\Omega_v:\bC I&\ra TM\\
t+is&\mapsto s\dot{\gamma}(t)
\end{aligned}
\end{equation}
The images $\Omega_v(\bC I\setminus I)$, $v\in TM\setminus M$ gives a foliation of $TM\setminus M$, called the {\it Levi-Civita foliation} (and when $\nabla$ is the Levi-Civita connection of the metric $g$, it is called the {\it Riemann foliation}).
\begin{definition} 
  An {\it adapted complex structure} (ac-structure for short) on $N$ means a  complex manifold structure so that for 
$\forall v\in TM$, the map $\Omega_v$ is holomorphic on $\Omega_v^{-1}(N)$.
$\nabla$ is called {\it entire}, if an ac-structure exists on $TM$.
A  pseudo-Riemannian manifold $(M,g)$    is called {\it entire} if its Levi-Civita connection  is entire.
\end{definition}
For compact $M$ existence and uniqueness  questions were treated in \cite{LSz91},  \cite{Sz91} when $\nabla$ is the Levi-Civita connection of a Riemannian metric and $N$ is a ball subbundle in $TM$ with fixed radius. It was shown  that  uniqueness always holds  and for real-analytic $g$   and small enough radius ac-structure   exists on $N$.  
 These  notions and results were later extended
to   pseudo-Riemannian metrics and general Koszul connections in  \cite{Sz04}, \cite{Bi03}, showing that uniqueness and existence still holds without the compactness of $M$. Other type of generalizations of ac-structures were  proposed for Finsler metrics \cite{DK03}, magnetic flows \cite{HK15} and sub-Riemannian structures \cite{U18}.
For Riemannian metrics Lempert \cite{L92} proved  that  existence of an ac-structure implies    real-analyticity of the metric.
It seems  still to be open
whether such regularity result holds   for general Koszul connections.

Ac-structures were further generalized in \cite{LSz12} and \cite{L19a}. Before we can recall these notions we need to discuss some generalizations of complex manifolds and holomorphic maps. 
 \begin{definition}
 An {\it involutive structure}  on a smooth manifold $X$
 is a smooth, involutive, complex subbundle $P$ of $\mathbb CTX$.  Involutivity means that the Lie bracket of local smooth sections of  $P$  is again
 a section of $P$. The pair $(X,P)$ is called an {\it involutive manifold}. If $(S,Q)$ is another involutive manifold, a smooth map $\phi:S\ra X$
 is involutive if $\phi_*Q\subset P$.
 An involutive structure 
 is called a {\it polarization}   if $\dim X$
 is even and
 rank$P=\frac1{2}\dim X$.  
\end{definition}
 \begin{definition}
 An  {\it adapted involutive  structure} (shortly an ai-structure) on $N$ is an involutive structure $P$  with
 ${\Omega_v}_*(T^{1,0}(\Omega_v^{-1}(N))\subset P$ for  $\forall v\in TM$. The ai-structure $P$ is an  {\it adapted complex polarization} (or ac-polarization) if $P$ is a 
 polarization.    
 $\nabla$ is said to be {\it polarization complete} or simply $\mathcal P$-{\it complete} if its ac-polarization  exists on $TM$.   
  A pseudo-Riemannian manifold $(M,g)$ is said to be $\mathcal P$-{\it complete}  if its Levi-Civita connection is $\mathcal P$-{\it complete}.
\end{definition}
For an ac-structure  on $N$, $P=T^{1,0}N$ will be an ac-polarization.
\subsection{Motivations}\label{Ss:motiv}
In this Subsection let  $(M,\nabla)$, $N$ and $\gamma_v$ be as in Subsection~\ref{Ss:notations}  and denote by $Z$  a complex manifold.
We would like to understand 
an ac-structure (resp. ac-polarization) on $N$  in terms of 
 holomorphic $\psi:N\ra Z$, (resp. involutive
 $\psi:(N,P)\ra T^{1,0}Z$) maps. 
 For a smooth map $f:M\ra Z$, $v\in T_pM$ and $\nabla$-geodesic $\gamma_v$, let $\chi_v:=f\circ\gamma_v$. Define
 \begin{equation}\label{E:Ni} 
N_i(f,Z):=\{v\in TM : \chi_v\text{ extends holomorpically to a neighborhood of } [0,i]\},
 \end{equation}
 and  denote by $\chi^\bC_v $  the holomorphic extension of $\chi_v$ (the actual  domain where $\chi_v^\bC$ lives plays no role here).
\begin{prop}\label{P:ac-hol-map}
 Suppose that an
  ac-structure  (resp. an ac-polarization $P$) exists on $N$ and
    let $\psi:N\ra Z$ be a holomorphic (resp. involutive) map. Denote by $f:=\left.\psi\right|_M$ its restriction. Then 
    $N\subset N_i(f,Z)$ and
    \begin{equation}
        \psi(v)=\chi^\bC_v(i)\qquad v\in N.
    \end{equation}
  In particular $f$ determines $\psi$ uniquely.  
\end{prop}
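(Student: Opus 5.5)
The plan is to show that for each $v\in N$ the composite $\psi\circ\Omega_v$ is the holomorphic extension $\chi^\bC_v$, and then evaluate it at $z=i$. Fix $v\in N$. Since $N$ is open and a fiberwise $*$-domain, the set $U_v:=\Omega_v^{-1}(N)\subset\bC I$ is open. It contains $I$, because $\Omega_v(t)=0_{\gamma_v(t)}\in M\subset N$. It also contains the segment $[0,i]$, because $\Omega_v(is)=sv\in N$ for $0\le s\le 1$. Let $W$ be the connected component of $U_v$ containing $[0,i]$. This $W$ is an open neighborhood of $[0,i]$ and also contains a neighborhood of $0$ in $I$.

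Next I show that $h:=\psi\circ\Omega_v:W\to Z$ is holomorphic. In the ac-structure case this is immediate: $\Omega_v$ is holomorphic on $U_v$ by definition and $\psi$ is holomorphic. In the polarization case, adaptedness gives ${\Omega_v}_*(T^{1,0}W)\subset P$, and involutivity of $\psi$ gives $\psi_*P\subset T^{1,0}Z$. Hence $h_*(T^{1,0}W)\subset T^{1,0}Z$. This means $h_*(\partial/\partial\bar z)$ lies in $T^{0,1}Z$, since it is the conjugate of $h_*(\partial/\partial z)$ for a real map. So $h_*(\partial/\partial \bar z)\in T^{1,0}Z\cap T^{0,1}Z=0$, and $h$ is a smooth map satisfying the Cauchy--Riemann equations, hence holomorphic.

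On the real points $t\in W\cap I$ we have $h(t)=\psi(0_{\gamma_v(t)})=f(\gamma_v(t))=\chi_v(t)$. Thus $h$ is a holomorphic extension of $\chi_v$, restricted to a neighborhood of $0$ in $I$, to a neighborhood of $[0,i]$. Therefore $v\in N_i(f,Z)$, and $\psi(v)=\psi(\Omega_v(i))=h(i)$. By the identity theorem, any two holomorphic extensions of $\chi_v$ to a connected neighborhood of $[0,i]$ agree, because they agree on a real interval around $0$. So $h(i)=\chi^\bC_v(i)$ is well defined and independent of choices. This gives the formula, and uniqueness of $\psi$ given $f$ follows since the right-hand side depends only on $f$ and $\nabla$.

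The main obstacle is the well-definedness of $\chi^\bC_v(i)$: the statement leaves the domain of the extension vague. I would handle this by working on a connected open neighborhood of $[0,i]$. The intersection of two such neighborhoods need not be connected, so I would take the component of the intersection containing $[0,i]$ and apply the identity theorem there. A minor additional point is the polarization case, where one must check that the smooth map $h$ is genuinely holomorphic; this is the type argument above.
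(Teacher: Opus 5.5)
Your argument is the paper's proof. You compose $\psi\circ\Omega_v$, note that it is holomorphic on $\Omega_v^{-1}(N)$, and observe that it restricts to $\chi_v=f\circ\gamma_v$ on the real axis because $\Omega_v(t)=0_{\gamma_v(t)}$. You then evaluate at $i$ using $\Omega_v(i)=v$. Your use of the component containing $[0,i]$ and of the identity theorem only makes explicit the well-definedness of $\chi^\bC_v(i)$, which the paper leaves implicit.

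One sentence in your polarization-case check is wrong and should be removed. You cannot have $h_*(\partial/\partial\bar z)\in T^{1,0}Z\cap T^{0,1}Z=0$: since $h$ is a real map, conjugation would then give $h_*(\partial/\partial z)=0$ too, so $dh=0$ and $h$ would be locally constant. Nothing in your argument puts $h_*(\partial/\partial\bar z)$ into $T^{1,0}Z$.

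What $h_*(T^{1,0}W)\subset T^{1,0}Z$ actually gives is that $h_*(\partial/\partial z)$ has no $(0,1)$-component. Equivalently, by conjugation, $h_*(\partial/\partial\bar z)\in T^{0,1}Z$. In local holomorphic coordinates $w_j$ on $Z$ this is exactly $\partial(w_j\circ h)/\partial\bar z=0$, the Cauchy--Riemann system you want. With that correction the proof is complete.
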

\begin{proof}
 Since $\Omega_v$ is holomorphic (involutive) on $\Omega_v^{-1}(N)$, so $\psi\circ\Omega_v$
is holomorphic as well. For every $\sigma\in I_p$
$$
\psi\circ\Omega_v(\sigma)=\psi(0_p\cdot\dot\gamma_v(\sigma))=f(\gamma_v(\sigma))=\chi_v(\sigma).
$$
Thus $\psi\circ\Omega_v$ is the holomorphic extension  $\chi^\bC_v$.  Since $\Omega_v(i)=\dot\gamma_v(0)=v$ and $N$ is a fiberwise $*$-domain in $TM$, $v\in N$ implies 
$[0,i]\subset \Omega_v^{-1}(N)$ and
\begin{equation}\label{E:psiomega}
\psi(v)=\psi(1\cdot\dot\gamma_v(0))=\psi\circ\Omega_v(i)=\chi^\bC_v(i).
\end{equation}
\end{proof}
This inspires the following definition.
\begin{definition}\label{D:polarmap}
  Let  $f:M\ra Z$ be  smooth. The corresponding {\it polar map} is defined as
  \begin{equation}\label{E:polarmap}
      \psi_f:N_i(f,Z)\ra Z,\qquad v\mapsto
      \chi^\bC_v(i).
  \end{equation}
\end{definition}
\begin{prop}\label{P:adap} 
Let $(M,\nabla)$ and $f:M\ra Z$ be real-analytic. 
Then 
\begin{enumerate}
    \item[a)]    $\psi_f\circ\Omega_v\in\mathcal O(\Omega_v^{-1}(N),Z)$ for  $\forall v\in TM$.
\item[b)] 
 If $\dim_\bR N=2\dim_\bC Z$ and $\psi_f:N\ra \psi_f(N)$ is a (local) diffeomorphism, then ac-structure exists on $N$.
\item[c)] 
 Let $P:=(\psi_f)_*^{-1}(T^{1,0}(Z))\subset\bC TN$. If $P$ is a smooth, involutive subbundle, then $P$ is an ai-structure on $N$. 
\end{enumerate}
\end{prop}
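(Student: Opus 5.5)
We take $N=N_i(f,Z)$ throughout. The key identity to establish is
\[
\psi_f\circ\Omega_v(t+is)=\chi_v^\bC(t+is)\qquad\text{for } t+is\in\Omega_v^{-1}(N),\ s\ge 0 .
\]
Part a) follows at once from this identity, and parts b) and c) are then formal consequences.

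For a), fix $v$ and a point $t+is\in\Omega_v^{-1}(N)$, so that $w:=s\dot\gamma_v(t)\in N$. The geodesic with initial velocity $w$ is the reparametrization
\[
\gamma_w(\sigma)=\gamma_v(t+s\sigma).
\]
Hence $\chi_w(\sigma)=\chi_v(t+s\sigma)$.

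By definition of $N$, $\chi_w$ extends holomorphically to a neighborhood of $[0,i]$. We want to conclude that $\chi_v$ extends holomorphically to a neighborhood of the segment $[t,t+is]$, with
\[
\chi_v^\bC(t+s\zeta)=\chi_w^\bC(\zeta).
\]
This is where real-analyticity of $f$ and $\nabla$ is used. Both $\chi_v$ and $\chi_w$ are real-analytic, so their holomorphic extensions are unique germs near the real axis. Analytic continuation along the segment then gives a well-defined holomorphic map $\Phi(z)$ on the union of the neighborhoods. We would make this precise by defining
\[
\Phi(t'+is')=\psi_f\bigl(s'\dot\gamma_v(t')\bigr)=\chi^\bC_{s'\dot\gamma_v(t')}(i)
\]
for each $t'+is'$ in the open set $\Omega_v^{-1}(N)$ (openness requires $N$ open, which we assume or prove).

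Next, for nearby points of $\Omega_v^{-1}(N)$ we check that $\Phi$ is locally given by a single holomorphic function. Near $z_0=t_0+is_0$ (with $s_0\neq0$), take $w_0=s_0\dot\gamma_v(t_0)$ and the extension $\chi^\bC_{w_0}$ on a neighborhood $U$ of $[0,i]$. For $t'+is'$ near $z_0$, the vector $s'\dot\gamma_v(t')$ equals $\tfrac{s'}{s_0}\dot\gamma_{w_0}\bigl(\tfrac{t'-t_0}{s_0}\bigr)$. By the reparametrization identity, its geodesic composed with $f$ is $\sigma\mapsto\chi_{w_0}\bigl(\tfrac{t'-t_0}{s_0}+\tfrac{s'}{s_0}\sigma\bigr)$. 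By uniqueness of holomorphic extension (the identity theorem on the connected neighborhood of the real segment), this extends as $\zeta\mapsto\chi^\bC_{w_0}\bigl(\tfrac{t'-t_0+s'\zeta}{s_0}\bigr)$. Evaluating at $\zeta=i$ gives
\[
\Phi(t'+is')=\chi^\bC_{w_0}\bigl((z-t_0)/s_0\bigr),\qquad z=t'+is',
\]
which is holomorphic in $z$. The case $s_0=0$ is handled similarly: $\chi_v$ itself is real-analytic, so it extends to a neighborhood of $t_0$.

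The main obstacle is exactly this connectedness and uniqueness bookkeeping. One must ensure that the holomorphic extension of the rescaled curve restricted to a neighborhood of the relevant segment agrees with $\chi_{w_0}^\bC$. For this we use that the domain containing both segments $[0,i]$ and the image of the shifted segment is connected and meets $\bR$, together with real-analyticity of $\chi_{w_0}$.

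For b), the identity from a) shows that the pullback of the complex structure of $Z$ by the local diffeomorphism $\psi_f$ is a complex structure on $N$ in which each $\Omega_v$ is holomorphic. Indeed, $\psi_f\circ\Omega_v$ is holomorphic and $\psi_f$ is a local biholomorphism onto its image, so this structure is an ac-structure.

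For c), by a) we have
\[
(\psi_f)_*(\Omega_v)_*T^{1,0}=(\psi_f\circ\Omega_v)_*T^{1,0}\subset T^{1,0}Z.
\]
Hence $(\Omega_v)_*T^{1,0}\subset P$ by the definition of $P$. Together with the assumed smoothness and involutivity of $P$, this is precisely the definition of an ai-structure.
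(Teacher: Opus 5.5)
Your proposal is correct and follows essentially the same route as the paper. The reparametrization $\gamma_w(\sigma)=\gamma_v(t+s\sigma)$ for $w=s\dot\gamma_v(t)$, the affine map $\zeta\mapsto s\zeta+t$ carrying $[0,i]$ onto $[t,t+is]$, and uniqueness of holomorphic extensions give (a), and then (b) and (c) follow formally from the definitions.

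Your organization is a bit tidier than the paper's. You define $\psi_f\circ\Omega_v$ pointwise and show it equals $\chi^\bC_{w_0}\bigl((z-t_0)/s_0\bigr)$ near each $z_0$. The paper instead builds a global extension $\chi_v^\bC$ on $\Omega_v^{-1}(N)$ without spelling out how the local extensions are glued.

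One small adjustment: let $N$ be an arbitrary open fiberwise $*$-domain contained in $N_i(f,Z)$, as the paper implicitly does, rather than $N_i(f,Z)$ itself. Openness of $N_i(f,Z)$ is not established for a general real-analytic $f$; the paper proves it only later, for complexifications and for the universal complexification of a Lie group.
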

\begin{proof} (a)
  Let $v\in TM$ and $z:=\sigma+i\tau\in\Omega_v^{-1}(N)$ be arbitrary but fixed  and $w:=\Omega_v(z)\in N$.
First we show that $\chi_v$ has a holomorphic extension  $\chi^\bC_v:\Omega_v^{-1}(N)\ra Z$. Since $N$ is  a fiberwise $*$-domain in $TM$, consequently
$\Omega_v^{-1}(N)$ will be a fiberwise $*$-domain in $\bC I_p$,
it suffices to show that $\chi_v$ extends holomorphically to a neigborhood of  $[\sigma,\sigma+i\tau]$. For $\tau=0$ this  is indeed true, since the connection and $f$ are real-analytic. Now  assume $\tau\not=0$. The map $\varphi(s):=\tau s+\sigma$, $s\in\bC$ maps $[0,i]$ onto $[\sigma,\sigma+i\tau]$, so the same holds for small open neighborhoods. By our assumption $f\circ\gamma_w$ extends holomorphically to a neighborhood of $[0,i]$. On the other hand $\gamma_w(t)=\gamma_v(\tau t+\sigma)=\gamma_v(\varphi(t))$. Therefore $f\circ\gamma_w\circ\varphi^{-1}=f\circ\gamma_v$ extends holomorphically to a neighborhood of $[\sigma,\sigma+i\tau]$. That proves the holomorphic extendability of $\chi^\bC_v$ to  $\Omega_v^{-1}(N)$.

  Now $\gamma_w=\gamma_v\circ\varphi$ implies $\chi_w=\chi_v\circ\varphi$ and so $\chi^\bC_w(\zeta)=\chi^\bC_v(\varphi(\zeta))$, where $\zeta$ is in some open neighborhood of $[0,i]$. In particular

$\psi_f(\Omega_v(z))=\psi_f(\tau\dot\gamma_v(\sigma))=\chi^\bC_w(i)=\chi^\bC_v(\varphi(i))=\chi^\bC_v(\tau i+\sigma)=\chi^\bC_v(z)$.
This finishes the proof of part (a). Part (b) and (c) follows from (a) and the definitions.
\end{proof}
\begin{remarks}
Proposition \ref{P:adap} gives a hint how to construct an ac- or ai-structure on a given $N\subset TM$: choose  an appropriate complex manifold and real-analytic map $f:M\ra Z$ so that $N\subset N_i(f,Z)$.
Essentially part (a) and (b) has been  the method to prove the existence of an ac-structure
in a variety of situations: \cite{Sz91}, \cite{Sz95}, \cite{Sz98}, \cite{Bi03},   \cite{HI03}, \cite{Sz04},  \cite{HI06}, \cite{HI09}, \cite{ABI18}.

The method of part (c) is the  new idea of Lempert (\cite{L19a}) that is: even if  the polar map  has  singular points (i.e. not a local diffeomorphism), or the dimension of $Z$ is not the right one, the set theoretical inverse image of the bundle $T^{1,0}Z$ can produce (in good cases) an ai-structure.  
    In part (c) if we assume only
that $P$ is a genuine smooth vector  bundle, this  already implies the  involutivity of $P$.
This is a special case of  (\cite{L19a}, Lemma 3.1, (cf. also Theorem 3.4 in \cite{L19a}), since the bundle 
$T^{1,0}Z)$ is involutive. To find an appropriate $Z$ and  $f$ however is a nontrivial task.
 With the help of the polar map,  $P$
can always be defined for arbitrary $Z$ and $f$. But in general such $P$   will    be  a "bundle" only in some weak sense. It may not be locally trivial. Although each fiber $P_v$  is  a linear subspace
in $\mathbb CT_v(N)$, $v\in N$,  its dimension may vary if the rank of $\psi_*$  depends on $v$.  
Nevertheless such "generalized bundles" are studied in the literature, see for example \cite{Le11}.
Assuming that a notion of involutivity exists for such generalized objects, it seems reasonable to expect that our $P$ will have it and then  $P$ would be an ai-structure in some weak sense. We do not pursue this line of investigation in this paper.

In the paper  \cite{L19a}  Lempert was working in a more general situation then ours. He was interested in uniqueness and local existence of ai- (and more general) structures and worked with certain families of left-invariant involutive structures on monoids and their corresponding families of ai-structures on some open sets $N$ of trajectories of a vector field. The size of $N$ was not important for him, only its existence. $Z$ and $f$ were fixed. He considered sets of trajectories whose  complexifications    exist in a neighborhood
of a fixed compact set $0\in C\subset\bC $  with nonempty interior that is invariant w.r.t. the monoid action. We are interested in choosing $Z$ and $f$ so that  $N$ is as large as possible. Instead of a whole family, we work with one fixed polarization, the canonical complex structure on the complex plane and no monoid action. So for us it suffices to consider geodesics with weaker holomorphic extension, namely extension  to 
a neighborhood of $[0,i]$ as in \eqref{E:Ni}. But the basic idea is the same: with a good choice of
$Z$ and $f:M\ra Z$ to guarantee that the $P$ obtained  is a genuine smooth vector bundle.
 This is achieved in two situations: for general complexifications in Theorem~\ref{T:polarization} and  Theorem~\ref{T:icompl} and for left invariant Koszul connections on Lie groups in Theorem~\ref{T:eta-polarization} and Theorem~\ref{T:nabla-i-complete}. Before these investigations however we also need a general procedure that produces involutive structures. This is the content of the next Section. 
\end{remarks}
\section{Involutive structures on totally real submanifolds}
 Let $X$ be a complex manifold and  $N\subset X$ a smooth, 
    maximally totally real submanifold. Denote by $\mathbb CTN$ the complexified tangent bundle
  and $J:TX\ra TX$  the almost complex tensor. Let $p\in N$.
  The map
  $$v\mapsto v\qquad iw\mapsto -Jw,\qquad v,w\in T_pN$$
defines a $\bC$-linear isomorphism $l:\mathbb CTN\ra\left.(TX\right|_N,-J)$ between these 
  complex vector bundles.
Suppose  $E\ra X$ is an involutive, holomorphic
  subbundle of $T^{1,0}X$. Then $E$ defines a holomorphic foliation on $X$.
Denote by $L_p$ the leaf through a point $p\in X$. The tangent spaces $T_pL_p$ define the $J$ invariant involutive subbundle $V\ra X$ of $TX$ and $V^{1,0}=E$. 
\begin{theorem}\label{T:pol} 
Let $P:=l^{-1}(\left.V\right|_N)$. Then

    (a) $P=\mathbb CTN\cap(\left. T^{1,0}X\right|_N\oplus \left.V^{0,1}\right|_N)$,

    (b) $P\ra N$ is a complex, involutive subbundle of $\mathbb CTN$ and rank$P$= rank$E$,

    (c) $P$ defines a CR structure  on the open  subset
    $G=\{p\in N\mid T_pN\cap V_p=\{0\}\}\subset N$ ($G$ could be empty). When 
    $\dim_{\mathbb C}X=2$rank$E$,  $P$ yields a complex manifold structure on $G$.
  \end{theorem}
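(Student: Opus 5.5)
The plan is to work pointwise with the isomorphism $l$, reduce (a) to linear algebra, and then use holomorphic foliation charts to get smoothness and involutivity in (b). Part (c) will follow from (a) by a dimension count.

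For (a), I identify $(T_pX,-J)$ with $T^{0,1}_pX$ via $w\mapsto \frac12(w+iJw)$. Under this identification, $l$ becomes the restriction to $\mathbb CT_pN\subset \mathbb CT_pX$ of the projection $\pi^{0,1}:\mathbb CT_pX\to T^{0,1}_pX$. Indeed, for $v,w\in T_pN$,
\[
\pi^{0,1}(v+iw)=\tfrac12\bigl((v-Jw)+iJ(v-Jw)\bigr),
\]
and this corresponds to $v-Jw=l(v+iw)$. Since $V$ is $J$-invariant, $V_p$ corresponds to $V^{0,1}_p$. Hence $\zeta\in\mathbb CT_pN$ lies in $P_p$ iff $\pi^{0,1}\zeta\in V^{0,1}_p$, i.e. iff $\zeta\in T^{1,0}_pX\oplus V^{0,1}_p$. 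This is (a).

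For (b), rank and smoothness are immediate: $l$ is a smooth $\bC$-linear bundle isomorphism and $V|_N$ is a smooth $J$-complex subbundle of complex rank $\mathrm{rank}\,E$. So $P$ is a smooth complex subbundle of $\mathbb CTN$ with $\mathrm{rank}_\bC P=\mathrm{rank}\,E$.

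Involutivity is the main step. I take local holomorphic foliation coordinates $(z,w)$ on $X$, with $z\in\bC^k$, $k=\mathrm{rank}\,E$, in which the leaves are $\{w=\text{const}\}$. Then $V^{0,1}$ is spanned by the $\partial/\partial\bar z_j$, so
\[
T^{1,0}X\oplus V^{0,1}=\{\zeta : \zeta(\bar w_m)=0 \ \text{for all } m\}
\]
is the annihilator of $d\bar w_1,\dots,d\bar w_{n-k}$. By (a), $P$ is then the annihilator in $\mathbb CTN$ of the pulled-back forms $\iota^*d\bar w_m=d(\bar w_m|_N)$. A subbundle defined as the common kernel of exact $1$-forms is involutive: if $\zeta,\xi\in P$ then
\[
d\bar w_m|_N([\zeta,\xi])=\zeta(\xi\bar w_m)-\xi(\zeta\bar w_m)=0.
\]
Alternatively, involutivity could be checked directly from the involutivity of $T^{1,0}X\oplus V^{0,1}$ (a sum of involutive bundles whose brackets stay inside), but the coordinate description is cleaner. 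I expect this step to carry the real content. It also requires knowing independently that $P$ has constant rank, which is exactly what the $l$-description in the first paragraph of (b) gives; without it the annihilator description alone could drop rank.

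For (c), I first note that $G$ is open because transversality $T_pN\cap V_p=0$ is an open condition. On $G$ I want $P\cap\overline P=0$. Write $Q=T^{1,0}X\oplus V^{0,1}$. Then $\overline Q=V^{1,0}\oplus T^{0,1}X$, so
\[
Q\cap\overline Q=V^{1,0}\oplus V^{0,1}=\mathbb CV .
\]
Hence $P\cap\overline P=\mathbb CT_pN\cap\mathbb CV_p$, which is the complexification of $T_pN\cap V_p$ because both spaces are real forms. This vanishes on $G$, so $P$ is a CR structure there. When $\dim_\bC X=2k$, we have $\dim_\bR N=2k$ and $\mathrm{rank}\,P=k$, so $P\oplus\overline P=\mathbb CTN$ on $G$. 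By Newlander–Nirenberg, $P$ then defines a complex structure on $G$ with $T^{1,0}G=P$; smoothness suffices here. In fact the functions $\bar w_m|_N$ are conjugates of holomorphic coordinates giving local charts, so Newlander–Nirenberg could even be avoided.
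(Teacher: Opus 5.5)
Your proof is correct and follows the same route as the paper. In (a) and (c), your identification of $l$ with the restriction of $\pi^{0,1}$ to $\mathbb CTN$ and the identity $(T^{1,0}X\oplus V^{0,1})\cap\overline{(T^{1,0}X\oplus V^{0,1})}=\mathbb CV$ repackage the paper's explicit computations of $P_p$ and of $P_p\cap\overline{P_p}=\mathbb C(T_pN\cap V_p)$. The only real variation is in (b): you pull the defining exact forms $d\bar w_m$ back to $N$, where the paper extends sections of $P$ to the involutive bundle $T^{1,0}X\oplus V^{0,1}$ and restricts; this is the same fact seen dually. Your remark that $w|_N$ gives holomorphic charts on $G$ makes explicit the complex structure that the paper only asserts.
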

  \begin{proof}
 (a)  Denote the right hand side at $p$ by $H_p$. First we show that $V_p\subset l(H_p)$. 
Let $\xi\in V_p$.  Since $T_pN\subset T_pX$ is maximally totally real, $\exists! v,w\in T_pN$ with $\xi=v-Jw$. Then 
$l(v+iw)=\xi$ and
$$
v+iw=\frac{v+Jw}2-i\frac{J(v+Jw)}{2}+
\frac{v-Jw}{2}+i\frac{J(v-Jw)}{2}=\eta^{1,0}+\xi^{0,1}\in T^{1,0}_pX\oplus V^{0,1}_p,
$$
 proving 
$V_p\subset l(H_p)$.

Let now 
$v+iw\in H_p$. Thus $v,w\in T_pN$ and there exists 
$\eta\in T_pX$ and  $\xi\in V_p$ with
$$
v+iw=\eta^{1,0}+\xi^{0,1}=\frac{\eta-iJ\eta}{2}+\frac{\xi+iJ\xi}{2}.
$$
But then
$$
l(v+iw)=v-Jw=\frac{\eta+\xi}{2}-\frac{\eta-\xi}{2}=\xi\in V_p,
$$
 showing $l(H_p)\subset V_p$.

(b) That $P\ra N$ is a smooth complex subbundle of
$\mathbb CTN$ is obvious from its definition.
 To show its involutive as well, observe that the bundle $T^{1,0}X\oplus V^{0,1}$ is
 involutive. This can be seen using local trivializations, the integrability of the complex structure of $X$ and the involutivity of $E$.
  Now let  $\xi,\eta$ be local smooth sections of $P$. Extend them smoothly in a
  small neighborhood to get sections of the bundle $T^{1,0}X\oplus V^{0,1}$ and denote the extensions with the same symbols. $[\xi,\eta]$ restricted
  to $N$ will be a section of $\mathbb CTN$, since $N$ is a submanifold in $X$.
  But it is also a section of $T^{1,0}X\oplus V^{0,1}$ as well, since the latter is involutive.
  Hence $\left.[\xi,\eta]\right|_N$ will be a section of $P$ showing the involutivity of $P$.
  
 (c) It is enough to prove 
$$
P_p\cap\overline{P_p}=\mathbb C(T_pN\cap V_p),\qquad p\in N.
$$
Let $\eta=v+iw\in P_p\cap\overline{P_p}$. From the definition of $P$ we have $l(v+iw)=v-Jw\in V_p$ and $l(v-iw)=v+Jw\in V_p$. Hence $v, Jw\in V_p$. But $V_p$ is $J$-invariant, so this implies $w\in V_p$. Hence
  $P_p\cap\overline{P_p}\subset\mathbb C(T_pN\cap C_p)$.

Now let $v+iw\in \mathbb C(T_pN\cap V_p)$. Then $v,w\in T_pN\cap V_p$ and  using again the fact that $V_p$ is $J$ invariant,   $Jv, Jw\in V_p$ as well. Therefore
  $l(v+iw)=v-Jw\in V_p$ and $l(v-iw)=v+Jw\in V_p$. Now from the definition of $P$
  we get $v+iw, v-iw\in P_p$, showing 
  $P_p\cap\overline{P_p}\supset\mathbb C(T_pN\cap T_p(L_p))$.
\end{proof}  
\begin{definition}Let $X$ be a complex manifold, $N$ a smooth manifold and $\varphi:N\ra X$  an immersion. We shall say that $\varphi$ is {\it maximally totally real}, if for every $p\in N$, the subspace $\varphi_*(T_pN)$ is totally real in $T_{\varphi(p)}X$ and $\dim_\bR N=\dim_\bC X$.    
\end{definition}
\begin{theorem}\label{T:subm}
    Let $X^{m+k}, Z^k$ be  complex manifolds of dimension $m+k$ (resp. $k$), $\Psi:X\ra Z$ a holomorphic submersion, $N$  a  smooth manifold and $\varphi:N\ra X$ a maximally totally real immersion. Let $\psi:=\Psi\circ\varphi$ and $\psi_*:\bC TN\ra \bC TZ$ the induced map. Then
  $P:= \psi_*^{-1}(T^{1,0}Z)\subset \bC TN$ defines an involutive structure on $N$ of rank $m$.  
\end{theorem}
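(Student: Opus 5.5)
The plan is to reduce the statement to Theorem~\ref{T:pol}. The fibers of the holomorphic submersion $\Psi:X\ra Z$ form a holomorphic foliation of $X$, with tangent bundle $V:=\ker\Psi_*\subset TX$. This $V$ is $J$-invariant because $\Psi$ is holomorphic. Its $(1,0)$-part $E:=V^{1,0}=\ker(\Psi_*|_{T^{1,0}X})$ is an involutive holomorphic subbundle of $T^{1,0}X$ of rank $m$, since $\Psi$ is a submersion. The immersion $\varphi$ is only a local embedding, so we work locally on $N$. Each $p\in N$ has a neighborhood $U$ such that $\varphi(U)$ is an embedded maximally totally real submanifold of $X$. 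Because involutivity and being a smooth subbundle of given rank are local properties, it suffices to prove the claim on each such $U$. There we identify $U$ with $\varphi(U)$, and Theorem~\ref{T:pol} yields an involutive subbundle $P'=l^{-1}(V|_U)$ of rank $m$.

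The core step is to show $P'=P$, where $P=\psi_*^{-1}(T^{1,0}Z)$. By Theorem~\ref{T:pol}(a),
$$P'_p=\mathbb CT_pN\cap\bigl(T^{1,0}_pX\oplus V^{0,1}_p\bigr).$$
For $\zeta\in\mathbb CT_pN$ (viewed inside $\mathbb CT_pX$), we decompose $\zeta=\zeta^{1,0}+\zeta^{0,1}$. Since $\Psi_*$ commutes with the type decomposition,
$$\psi_*\zeta=\Psi_*\zeta^{1,0}+\Psi_*\zeta^{0,1},$$
and $\Psi_*\zeta^{0,1}\in T^{0,1}Z$. Hence $\psi_*\zeta\in T^{1,0}Z$ if and only if $\Psi_*\zeta^{0,1}=0$, that is, $\zeta^{0,1}\in(\ker\Psi_*)^{0,1}=V^{0,1}$. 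This is exactly the condition $\zeta\in T^{1,0}X\oplus V^{0,1}$, so $P=P'$ pointwise.

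Parts (b) of Theorem~\ref{T:pol} then give that $P$ is a smooth, involutive complex subbundle of rank $\operatorname{rank}E=m$. This is the content of the statement.

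The main obstacle is bookkeeping rather than substance. Theorem~\ref{T:pol} is stated for an embedded submanifold, and $\varphi$ is only an immersion, so the localization above is needed. We also have to check that the identification $\mathbb CT_pN\cong\mathbb C\varphi_*T_pN$ via $\varphi_*$ is compatible with the definition of $P$ through $\psi_*=\Psi_*\circ\varphi_*$; it is, since $\varphi_*$ is injective on $\mathbb CT_pN$. The rank count can be confirmed independently. The map $\varphi_*:\mathbb CT_pN\ra T_{\varphi(p)}X$, composed with the projection to $T^{0,1}$, is an isomorphism because $\varphi$ is maximally totally real. So $\zeta\mapsto\zeta^{0,1}$ identifies $P_p$ with $V^{0,1}_p$, which has complex dimension $m$.
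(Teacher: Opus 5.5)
Your proposal is correct and follows the paper's proof: you localize so that $\varphi$ is an embedding, apply Theorem~\ref{T:pol} to the fiber foliation $V=\ker\Psi_*$, and identify $P$ with $l^{-1}(V|_N)$ using the characterization in part (a). Your single equivalence $\psi_*\zeta\in T^{1,0}Z \iff \zeta^{0,1}\in V^{0,1}$ is a more compact form of the paper's two-inclusion computation, which the paper writes out in terms of $v+iw$ and $J_X$.
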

\begin{proof}
  Let $\Psi_*:TX\ra TZ$ be the induced map and $V:=\ker\Psi_*$.  The rank-theorem 
yields that $V\ra X$ is an involutive  vector subbundle of $TX$. Since $\Psi$ is holomorphic, $V$ is $J$-invariant and $E:=V^{1,0}$ is an involutive holomorphic subbundle of $T^{1,0}X$. For any $p\in N$, the map $\varphi$ will be a maximally totally real  embedding of a small open neighborhood of $p$ into a small open neighborhood of $\varphi(p)$ in $X$. Restricting everything to these neighborhoods we can assume that in fact $N$ is a submanifold of $X$. So we are in the situtation of Theorem \ref{T:pol}. 
Let $\widetilde P:=l^{-1}(\left.V\right|_N)$.  We need to show that $\widetilde P=P$. 
Let $v+iw\in \widetilde P_p$, $p\in N$. From Theorem \ref{T:pol} part (a) we know that there exists $\xi\in T_pX$ and $\eta\in V_p$ so that $v+iw=\xi^{1,0}+\eta^{0,1}$. Then 
$\psi_*(v+iw)=\Psi_*(\xi^{1,0})+\Psi_*(\eta^{0,1})=\Psi_*(\xi^{1,0})\in T^{1,0}Z$ showing $\widetilde P_p\subset P_p$.
Now let $v+iv\in P_p$. Then $\psi_*(v+iw)=\Psi_*v+i\Psi_*w\in T^{1,0}Z$ implies
$J_Z\Psi_*v=-\Psi_*w$. Since $\Psi$ is holomorphic we get $\xi:=w+J_Xv\in \ker \left.\Psi_*\right|_p=V_p$. Therefore
$$
v+iw=v-iJ_Xv+i(J_Xv+w)=2v^{1,0}+i\xi^{1,0}+i\xi^{0,1}\in \bC T_pN\cap(T^{1,0}_pX\oplus V^{0,1}_p)=P_p
$$ 
\end{proof} 
\begin{remark}
 Similar ideas, that were used in the proof above, already appeared in \cite{L19a} Proposition 4.3.  
\end{remark}
 \section{Flows}
 \subsection{Flow of a vector field}\label{Ss:flow}
 Here we fix some notation and collect some well known facts. As a reference see for example \cite{Bo86}.
 Let $X$ be a smooth (i.e. $C^\infty$) or real-analytic manifold and $V$ a vector field on $X$ with the same smoothness. For $p\in X$, let $I_p=(\alpha_p,\beta_p)$, where $\alpha_p<0<\beta_p$, be the maximal interval where an integral curve $c_p:I_p\ra X$ of $V$ with initial condition $c_p(0)=p$ exists. $c_p$ is called {\it complete}, if $I_p=\bR$. $V$ is said to be $\bR$-{\it complete} if all of its  integral curves are complete.
 The set
 $$
 \mathcal D=\bigcup_{p\in X} I_p\times\{p\}
 $$
 is an open subset in $\bR\times X$.
 For a  $t\in\bR$ let
 $$
 D_t:=\{p\in X : t\in I_p\}.
 $$
Then $\mathcal D\cap(\{t\}\times X)=\{t\}\times D_t$, $D_t$ is open (possible empty) in $X$. The (local) flow of $V$ is the smooth (resp. real-analytic) map
$$
\Phi:\mathcal D\ra X\qquad (t,p)\mapsto    \Phi(t,p):=c_p(t).
$$
For fixed $t\in\bR$ and $D_t\not=\emptyset$, the {\it time-t} map
is $\phi^V_t:=\Phi(t,.):D_t\ra X$. If it is clear from the content we just write $\phi_t$. Then
$\phi_t(D_t)=D_{-t}$ and $\phi_{-t}\circ\phi_t(p)=p$ for all $p\in D_t$.
Hence $\phi_t(D_t)$ is also open in $X$ and $\phi_t$ is a diffeomorphism between $D_t$ and its image.

Let now $X$ be a complex manifold, $\mathcal V$ a holomorphic vector field on $X$ and $V=2$Re$\mathcal V$. Then for $t\in \bR$, the time-t map $\phi^V_t$ will be biholomorphic between $D_t$ and its image. 
\begin{definition}
      $D^{\mathcal V}_i$ denotes the set of those $p\in X$ for which the holomorphic trajectory $\chi_p$ of $\mathcal V$ with $\chi_p(0)=p$ exists in an open neighborhood of $[0,i]$ (the neighborhood can depend on $p)$. We call the map $\phi^{\mathcal V}_i:D^{\mathcal V}_i\ra X$ defined by
$\phi^{\mathcal V}_i(p):=\chi_p(i)$ the {\it time-i} map of $\mathcal V$.
\end{definition}
\begin{prop}\label{P:flowdomain} Let $X$ be a complex manifold, $\mathcal V$ a holomorphic vector field on $X$ and  $V:=2\Re\mathcal V$.  Then 
$D^{\mathcal V}_i=D^{JV}_1$ and $\phi^\mathcal V_i(p)=\phi^{JV}_1(p)$, $p\in D^{\mathcal V}_i$. In particular $D^{\mathcal V}_i$ and $\phi^\mathcal V_i(D^{\mathcal V}_i)$ are  open in $X$ and $\phi^\mathcal V_i:D^{\mathcal V}_i\ra
\phi^\mathcal V_i(D^{\mathcal V}_i)$ is a biholomorphism.    
\end{prop}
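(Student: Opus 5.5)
The plan is to compare the holomorphic trajectory of $\mathcal V$ along the imaginary axis with the real integral curve of $JV$. We fix the conventions first. In a local chart write $\mathcal V=\sum a_j\partial_{z_j}$, so that $V=2\Re\mathcal V$ is the real vector field with $V(z_j)=a_j$. For a holomorphic curve $\chi$ we have $\chi'(\zeta)=\mathcal V(\chi(\zeta))$, meaning that $\frac{\partial}{\partial t}\chi(t+is)=V(\chi)$ as a real tangent vector. By the Cauchy--Riemann equations,
$$\frac{\partial}{\partial s}\chi(t+is)=J\,\frac{\partial}{\partial t}\chi(t+is)=JV(\chi(t+is)).$$
The main step is to prove this identity cleanly. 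Holomorphicity of $\chi$ gives $\chi_*(\partial_s)=J\chi_*(\partial_t)$, and $\chi_*(\partial_t)=V$ because $2\Re$ of a $(1,0)$ vector corresponds to the real vector with the same holomorphic components. Consequently $s\mapsto\chi_p(is)$ is an integral curve of $JV$ starting at $p$.

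With this in hand we prove $D^{\mathcal V}_i\subset D^{JV}_1$ together with the equality of the two time maps. If $p\in D^{\mathcal V}_i$, then $\chi_p$ is defined on a neighborhood of $[0,i]$, so $c(s):=\chi_p(is)$ is an integral curve of $JV$ on an open interval containing $[0,1]$. By uniqueness of integral curves, $1\in I_p$ for $JV$ and $\phi^{JV}_1(p)=\chi_p(i)=\phi^{\mathcal V}_i(p)$.

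The converse inclusion $D^{JV}_1\subset D^{\mathcal V}_i$ is where the real work lies. Take $p\in D^{JV}_1$ and let $c$ be the $JV$-integral curve on $[0,1]$ (indeed on a slightly larger interval). Because $JV=2\Re(i\mathcal V)$ and $i\mathcal V$ is holomorphic, the flow of $JV$ is holomorphic in the initial point. Cover the compact set $c([0,1])$ by finitely many charts. In each chart, solve the holomorphic ODE $\chi'=\mathcal V(\chi)$ with initial point $c(s_0)$; the holomorphic Cauchy existence theorem gives a solution on a small disc around $is_0$. By the first part, the restriction of this solution to the imaginary direction agrees with $c$, so solutions belonging to overlapping discs agree on the intersection of the discs, by uniqueness of solutions to the holomorphic ODE, or equivalently by the identity theorem. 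Gluing them gives $\chi_p$ on a neighborhood of $[0,i]$. A Lebesgue-number/compactness argument supplies a uniform radius for the discs, which is the one point needing care.

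Finally, $D^{JV}_1$ is open and $\phi^{JV}_1$ is a diffeomorphism onto the open set $D^{JV}_{-1}$, by the facts recalled in Subsection~\ref{Ss:flow}. It is holomorphic because it coincides with the time-$1$ map of $2\Re(i\mathcal V)$, which the remark preceding the definition shows to be biholomorphic onto its image. This yields all the claims of Proposition~\ref{P:flowdomain}.
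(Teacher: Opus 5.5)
Your proposal is correct. The inclusion $D^{\mathcal V}_i\subset D^{JV}_1$ (restrict $\chi_p$ to the imaginary direction and use Cauchy--Riemann) is the same as in the paper. So is the conclusion, where you identify $\phi^{JV}_1$ with the time-$1$ map of $2\Re(i\mathcal V)=JV$ to get biholomorphicity.

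The converse inclusion follows a different route. The paper takes the $JV$-integral curve $d$ and uses real-analyticity to extend it holomorphically to a neighbourhood of $[0,1]$. It then simply sets $\chi(\zeta):=d(-i\zeta)$. This tacitly requires two things. First, a patching argument to obtain the extension on a neighbourhood of the whole compact segment. Second, the identity theorem, to see that the complexified equation $\frac{d}{d\sigma}z_j(d)=ia_j(d)$ becomes $\chi'=\mathcal V(\chi)$ after the rotation.

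You never complexify the real curve. You solve $\chi'=\mathcal V(\chi)$ with data $c(s_0)$ at $is_0$ by the holomorphic Cauchy theorem. You identify each local solution on $i\bR$ with $c$, using your first step plus uniqueness of real integral curves. Then you glue by the identity theorem. This makes the patching explicit and uses only ODE existence and uniqueness. The paper's version is shorter but leaves those two points to the reader.

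Two small remarks on your write-up. No uniform radius is needed: finitely many discs centred on $[0,i]$ that cover the segment already form an open neighbourhood of it. What the gluing actually uses is that two such discs are convex and symmetric about $i\bR$. Hence they meet, if at all, in a connected set containing a segment of $i\bR$, on which both local solutions equal $c$. Also, holomorphic dependence of the $JV$-flow on the initial point plays no role in the converse inclusion; it is only needed for the final biholomorphicity claim.
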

\begin{proof} Let $p\in D^{\mathcal V}_i$
    and $\chi_p$ the corresponding holomorphic integral curve of $\mathcal V$.
    Then  $d(t):=\chi_p(it)$ will be an integral curve of $JV$ (where $J$ is the almost complex tensor on $X$) defined in a neighborhood of $[0,1]$ with $d(0)=p$. Hence $p\in D^{JV}_1$ and $\phi^{JV}_1(p)=d(1)=\chi_p(i)=\phi^{\mathcal V}_i(p)$. 
    
    Now let $p\in D^{JV}_1$ and $d:I_p\ra X$  the integral curve of $JV$ with $d(0)=p$ and $1\in I_p$. Since $V$ is real-analytic, $d$  extends holomorphically to a neighborhood of $[0,1]$ and   
    $\chi(\zeta):=d(-i\zeta)$ will be a holomorphic trajectory of $\mathcal V$ defined in some neighborhood of $[0,i]$. 
    Thus $D^{\mathcal V}_i=D^{JV}_1$ and $\phi^\mathcal V_i(p)=\phi^{JV}_1(p)$. Hence $D^{\mathcal V}_i$ and $\phi^\mathcal V_i(D^{\mathcal V}_i)$ are open in $X$ and $\phi^\mathcal V_i=\phi^{JV}_1:D^{JV}_1\ra \phi^{JV}_1(D^{JV}_1)$ is a diffeomorphism
     (c.f. \cite{Bo86}).
    Since $\mathcal V$ is holomorphic, $\phi^\mathcal V_i=\phi^{JV}_1$ will be  biholomorphic (c.f. \cite{Hi97}).
\end{proof}
\begin{definition}\label{D-Ccomplete}
The holomorphic vector field $\mathcal V$ on the complex manifold $X$
is called {\it $\bC$-complete } if every holomorphic trajectory is defined over $\bC$.
\end{definition}
 \subsection{Geodesic flows}\label{Ss:geodflow}
 Let $Z$ be a complex manifold, $T^{1,0}Z$ its holomorphic tangent bundle and $\pi:T^{1,0}Z\ra Z$
 the bundle projection. Let $\mathfrak D$ be a holomorphic Koszul connection on $T^{1,0}Z$, i.e. a $\bC$-linear map (of sheaves) $\mathfrak D:T^{1,0}Z\ra\Omega_Z\otimes T^{1,0}Z$ which satisfies the Leibniz rule
 \begin{equation}
     \mathfrak D(f\cdot s)=\partial f\otimes s+f\cdot\mathfrak D s,
 \end{equation}
 where $\Omega_Z$ denotes the holomorphic cotangent sheaf, $f$ any local holomorphic function on $Z$ and $s$ any local holomorphic vector field on $Z$.
 For a $w\in T^{1,0}Z$, let $\Gamma_w$ be the holomorphic $\mathfrak D$-geodesic defined in some open neighborhood $G$ of $0$ with initial condition $\dot\Gamma^{1,0}_w(0)=w$ and define $\chi_w:G\ra T^{1,0}Z$, with $\zeta\mapsto\dot\Gamma^{1,0}_w(\zeta)$ and finally let $\mathcal V(w):=\dot\chi^{1,0}_w(0)$.  Then $\mathcal V :X\ra T^{1,0}X$ is a holomorphic (1,0) vector field on the complex manifold $X:=T^{1,0}Z$, the total space of the holomorphic tangent bundle.
 \begin{definition}
     $\mathcal V$ is called the {\it geodesic spray} of $\mathfrak D$ 
     (c.f. \cite{La95} for the real version). The holomorphic Koszul manifold $(Z,\mathfrak D)$ is called {\it $\bC$-complete} if all the holomorphic geodesics are defined over $\bC$ or equivalently if $\mathcal V$ is $\bC$-complete.
 \end{definition}
By its definition the  holomorphic integral curves of $\mathcal V$ are the tangential curves of holomorphic $\mathfrak D$-geodesics. Let $V:=2\Re \mathcal V$ and define
 \begin{equation}\label{E:Gi}
 G_i:=\{w\in T^{1,0}Z : \Gamma_w\text{ exists in a small open neighborhood of } [0,i]
 \}.
 \end{equation}
 \begin{prop}\label{P:Giopen}
 The set $ G_i\subset T^{1,0}Z$ is open, $Z\subset G_i$ and for every $p\in Z$, $G_i\cap T^{1,0}_pZ$ is a  $*$-domain in $T^{1,0}_pZ$ w.r.t. $0_p$. The map $\Psi:G_i\ra Z$, $w\mapsto \Gamma_w(i)=\pi\circ\phi^{JV}_1(w)$ is a holomorphic submersion.     
 \end{prop}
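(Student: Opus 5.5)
The plan is to identify $G_i$ with the domain of the time-$1$ map of the real vector field $JV$ on $X=T^{1,0}Z$. Then openness and holomorphy follow from Proposition~\ref{P:flowdomain}, and the submersion property comes from a scaling argument.

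First, $G_i=D^{\mathcal V}_i$. If $\Gamma_w$ exists near $[0,i]$, then $\zeta\mapsto\dot\Gamma^{1,0}_w(\zeta)$ is a holomorphic trajectory of $\mathcal V$ starting at $w$ and defined near $[0,i]$. Conversely, projecting such a trajectory by $\pi$ gives a holomorphic $\mathfrak D$-geodesic $\Gamma_w$, since the trajectories of the spray are exactly the tangent curves of geodesics. Proposition~\ref{P:flowdomain} then gives $G_i=D^{\mathcal V}_i=D^{JV}_1$, which is open. It also shows that $\phi^{\mathcal V}_i=\phi^{JV}_1$ is a biholomorphism onto an open set. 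Therefore $\Psi=\pi\circ\phi^{JV}_1$ is holomorphic, and $\Gamma_w(i)=\pi(\dot\Gamma^{1,0}_w(i))$ gives the formula in the statement.

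Next comes the $*$-property, with $Z$ identified with the zero section. The constant curve at $0_p$ is a geodesic on all of $\bC$, so $Z\subset G_i$. For $w\in G_i\cap T^{1,0}_pZ$ and $0\le s\le1$, uniqueness of holomorphic geodesics gives $\Gamma_{sw}(\zeta)=\Gamma_w(s\zeta)$ near $0$, because the geodesic equation is invariant under affine reparametrization. The right-hand side is defined on $s^{-1}U$, where $U\supset[0,i]$ is a neighborhood on which $\Gamma_w$ exists. Since $s\,[0,i]\subset[0,i]\subset U$, the set $s^{-1}U$ is a neighborhood of $[0,i]$, so $sw\in G_i$. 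The case $s=0$ is the zero vector, already covered.

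The main obstacle is the submersion property. Since $\phi^{JV}_1$ is a local biholomorphism, $\Psi_*$ at $w$ is surjective if and only if $\pi_*$ restricted to $(\phi^{JV}_1)_*(T_wX)=T_{\phi^{JV}_1(w)}X$ is surjective. This holds because $\pi:X\to Z$ is a bundle projection, hence a submersion, and a local diffeomorphism composed with a submersion is a submersion. So the surjectivity is in fact immediate. I expect the only care needed is to check carefully that $\phi^{\mathcal V}_i$ is a local diffeomorphism on all of $D^{\mathcal V}_i$, and Proposition~\ref{P:flowdomain} supplies exactly that. If one wants a more concrete check, one can restrict to a fiber $T^{1,0}_pZ$: there $\Psi$ is the holomorphic exponential map at $i$, with differential $i\cdot\mathrm{id}$ at $0_p$. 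This fiber restriction is not needed for the general statement.
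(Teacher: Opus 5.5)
Your proposal is correct and follows essentially the same route as the paper. You identify $G_i=D^{\mathcal V}_i=D^{JV}_1$ via Proposition~\ref{P:flowdomain} to get openness and biholomorphicity of the time-$i$ map, use constant geodesics and the rescaling $\Gamma_{\tau w}(\zeta)=\Gamma_w(\tau\zeta)$ for the $*$-property, and deduce from $\Psi=\pi\circ\phi^{\mathcal V}_i$ that $\Psi$ is a holomorphic submersion. You also make explicit some details the paper leaves implicit, namely both inclusions in $G_i=D^{\mathcal V}_i$ and why the rescaled domain still contains $[0,i]$.
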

    \begin{proof}
        What was said above together with Proposition \ref{P:flowdomain} yields
        $G_i=D^\mathcal V_i=D^{JV}_1\subset X=T^{1,0}Z$, hence $G_i$ is open. The points of $Z$ correspond to the constant geodesics, so $Z\subset G_i$. If $w\in G_i$ and $0\le \tau\le 1$, then $c(\zeta):=\Gamma_w(\tau\zeta)$ is the holomorphic geodesic with $\dot c^{1,0}(0)=\tau w$, showing $\tau w\in G_i$. Also from Proposition \ref{P:flowdomain} we  get, that 
        the map 
        $\phi^\mathcal V_i:G_i\ra T^{1,0}Z$, $\phi^\mathcal V_i(w)=\chi_w(i)=\dot\Gamma^{1,0}_w(i)$ is a biholomorphism to its image and
$\pi\circ\phi^\mathcal V_i(w)=\pi(\dot\Gamma^{1,0}_w(i))=\Gamma_w(i)=\Psi(w)$. Therefore $\psi$ is a holomorphic submersion. 
    \end{proof}
The connection $\mathfrak D$ induces an ordinary  Koszul connection $\widehat\nabla$ on $TZ$ as follows. The $\bC$-bundle isomorphism  $\xi:TZ\ra T^{1,0}Z, u\mapsto u^{1,0}$ yields a complex manifold structure on the smooth manifold $TZ$ by pulling back the complex manifold structure of the total space $T^{1,0}Z$. The pulled back vector field $\xi^*(V)$ will be a real vector field on the total space  $TZ$, the geodesic spray of $\widehat\nabla$. Looking at  this from a different angle:  $\mathfrak D+\bar{\partial}$ 
will be an ordinary Koszul connection on the complex vector bundle $T^{1,0}Z$ that yields via the  $\bC$-bundle isomorphism $\xi$ the Koszul connection $\widehat \nabla$ on $TZ$.
One can show that for  any local holomorphic (1,0) vector field $u$ in $Z$, 
$\widehat\nabla (u+\bar u)=\mathfrak Du+\overline{\mathfrak Du}$ and in fact   this formula could be used to define $\widehat \nabla$ . What is important for us is the following property of $\widehat\nabla$ (c.f. \cite{LB80, LB83}):  
\begin{equation}\label{E:geod}
\widehat\nabla\text{-geodesics are precisely the restrictions of holomorphic } \mathfrak D\text{-geodesics to } \mathbb R.
\end{equation}
\section{Complexifications}\label{S:complexification}
\begin{definition}\label{D:complexification}
 Let $M$ be a real-analytic manifold  with a  real-analytic  Koszul connection $\nabla$. A {\it complexification} of $(M,\nabla)$ is a triple $(Z,\mathfrak D, \iota)$, where $Z$ is a complex manifold, $\mathfrak D$  a holomorphic Koszul connection on $T^{1,0}Z$, $\widehat\nabla$ the corresponding real Koszul connection on $TZ$ and 
 $\iota:M\ra Z$ a maximally totally real, real-analytic immersion with 
 $\nabla=\iota^*(\widehat\nabla)$.  
\end{definition}
For  $v\in T_pM$ let $\gamma_v:I_p\ra M$ be the $\nabla$-geodesic with initial condition $\dot\gamma_v(0)=v$  and $\chi_v:=\iota\circ\gamma_v$.
 Recall from Section~\ref{Ss:motiv} the definition of the set $N_i:=N_i(\iota, Z)\subset TM$ (see \eqref{E:Ni}) and the corresponding polar map $\psi_\iota:N_i\ra Z$ (see \eqref{E:polarmap}).
\begin{theorem}\label{T:polarization} Let
$(M,\nabla)$ be a real-analytic Koszul manifold 
with complexification 
 $(Z,\mathfrak D,\iota)$. Then $N_i\subset TM$  is an open, fiberwise $*$-domain  (w.r.t. $o_p \in T_pM$).   The polar map  $\psi_\iota:N_i\ra Z$ is real-analytic and with the induced map
 $(\psi_\iota)_*:\bC TN_i\ra \bC TZ$, 
$P:=(\psi_\iota)^{-1}_*(T^{1,0}Z)$
defines an ac-polarization on $N_i$. Let $\mathcal S\subset N_i$ be the set of those points where $\psi_\iota$ is not a local diffeomorphism. Then $\mathcal S$ is 
a real-analytic subset in $N_i$ with empty interior and 
the polar map defines  a complex manifold structure on $D:=N_i\setminus\mathcal S$. Denote by $D_0$ the connected component of $D$ that contains $M$. Then the complex structure on $D_0$  will be an ac-structure.
\end{theorem}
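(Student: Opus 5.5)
The plan is to realize $\psi_\iota$ as $\Psi\circ\varphi$, with $\Psi:G_i\ra Z$ the holomorphic submersion of Proposition~\ref{P:Giopen} and $\varphi:TM\ra T^{1,0}Z$ the map $v\mapsto(\iota_*v)^{1,0}$. Then Theorem~\ref{T:subm} produces $P$, and Proposition~\ref{P:adap} upgrades it to an ac-polarization.

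First, by \eqref{E:geod} and $\nabla=\iota^*\widehat\nabla$, the curve $\chi_v=\iota\circ\gamma_v$ is a $\widehat\nabla$-geodesic. It is therefore the restriction to $\bR$ of the holomorphic $\mathfrak D$-geodesic $\Gamma_{\varphi(v)}$. By the identity theorem, $\chi_v$ extends holomorphically to a neighborhood of $[0,i]$ exactly when $\Gamma_{\varphi(v)}$ does. This gives $N_i=\varphi^{-1}(G_i)$ and $\psi_\iota=\Psi\circ\varphi$ on $N_i$.

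Next I check the properties of $\varphi$ and read off the first claims. The map $\varphi$ is real-analytic and $\bR$-linear on fibers. It is a maximally totally real immersion of $TM$ into the $2n$-dimensional complex manifold $T^{1,0}Z$. Indeed, in local holomorphic coordinates in which $\iota(M)$ is locally $\bR^n\subset\bC^n$, $\varphi$ is locally the standard inclusion $T\bR^n=\bR^n\times\bR^n\subset\bC^n\times\bC^n$, which is totally real of real dimension $2n$. Openness of $N_i$ follows from openness of $G_i$. The $*$-property follows from fiberwise linearity of $\varphi$ and the $*$-property of $G_i$. Real-analyticity of $\psi_\iota$ is then clear. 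Theorem~\ref{T:subm}, applied with $X=G_i$ of dimension $2n$ and $k=n$, gives that $P$ is an involutive subbundle of rank $n=\frac12\dim N_i$, i.e.\ a polarization. Proposition~\ref{P:adap}(a) says $\psi_\iota\circ\Omega_v$ is holomorphic, so $\psi_{\iota*}$ maps $(\Omega_v)_*T^{1,0}\bC I$ into $T^{1,0}Z$; hence $(\Omega_v)_*T^{1,0}\subset P$ and $P$ is adapted.

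For the singular set, let $\mathcal S=\{\det d\psi_\iota=0\}$, computed in local real-analytic charts. This is a real-analytic subset. The substantive point is that it has empty interior. Near the zero section $\psi_\iota$ is a local diffeomorphism: at $0_p$ the differential is the identity on the horizontal part and $v\mapsto J\iota_*v$ on the vertical part, since $\frac{d}{ds}\Gamma_{s\varphi(v)}(i)|_{s=0}=J\iota_*v$. Combined with total reality, this gives invertibility at $0_p$. So $\det d\psi_\iota\not\equiv0$ on each connected component of $N_i$ meeting $M$. Every component of $N_i$ meets the zero section because $N_i$ is a fiberwise $*$-domain. Real-analyticity then forces $\mathcal S$ to be nowhere dense.

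On $D=N_i\setminus\mathcal S$, $\psi_\iota$ is a local diffeomorphism, and pulling back the complex structure of $Z$ gives a complex structure whose $(1,0)$ bundle is $P$. Proposition~\ref{P:adap}(b) would give adaptedness, but $D_0$ need not be a $*$-domain, and $\Omega_v^{-1}(D_0)$ is open in $\bC I$. Holomorphy of $\Omega_v$ there follows pointwise, since $(\Omega_v)_*T^{1,0}\subset P=T^{1,0}D$. So $D_0$ carries an ac-structure in the sense of the definition, with $M\subset D_0$ because $M\cap\mathcal S=\emptyset$.

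The main obstacle I expect is the careful verification that $\varphi$ is maximally totally real and the computation of $d\psi_\iota$ along the zero section.
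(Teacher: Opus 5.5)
Your proposal is correct and follows the paper's proof essentially step for step: $N_i=(\xi\circ\iota_*)^{-1}(G_i)$, $\psi_\iota=\Psi\circ\xi\circ\iota_*$, Theorem~\ref{T:subm} for the polarization, Proposition~\ref{P:adap} for adaptedness, and real-analyticity of the Jacobian determinant for $\mathcal S$. The differences are refinements rather than a different route: the paper cites Lemma 3.1 of \cite{Sz04} for nondegeneracy along $M$, whereas you compute $d\psi_\iota$ at $0_p$ directly. You also make explicit two points the paper leaves implicit: every component of $N_i$ meets $M$ by the $*$-property, and adaptedness on $D_0$ has to be checked pointwise because $D_0$ need not be a $*$-domain.
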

\begin{proof}
   For $v=0_p\in T_pM$, the constant map $\gamma(t)=p$ shows $M\subset N_i$. Since $\nabla=\iota^*(\widehat\nabla)$,  for a $v\in TM$ the curve $\chi_v=\iota\circ\gamma_v$ will be a $\widehat\nabla$-geodesic with initial condition $\dot\chi_v(0)=\iota_*v$. From \eqref{E:geod} we know that the holomorphic $\mathfrak D$-geodesic $\Gamma_w$ with $w=(\iota_*v)^{1,0}$ extends $\chi_v$  holomorphically  to  a neighborhood of $0$. That is
   $v\in N_i$ iff $(\iota_*v)^{1,0}\in G_i$ (where $G_i$ is from \eqref{E:Gi}). 
   Using  $\xi:TZ\ra T^{1,0}Z$, $w\mapsto w^{1,0}$, this means $N_i=(\xi\circ\iota_*)^{-1}(G_i)$ and 
   $N_i\cap T_pM=(\xi\circ\iota_*)^{-1}(G_i\cap T^{1,0}_{\iota(p)}Z)$.
   Now Proposition \ref{P:Giopen} implies  $G_i$ is open in $T^{1,0}Z$, hence $N_i$ will be open in $TM$ and that $N_i\cap T_pM$ is a $*$-domain in $T_pM$.

    Since  $(Z,\nabla^{\mathbb C},\iota)$ is a complexification of $(M,\nabla)$, the map $\xi\circ\iota_*:N_i\ra G_i$ will  be  a maximally totally real immersion. From Proposition \ref{P:Giopen} the map  $\Psi:G_i\ra Z$, $w\mapsto\Gamma_w(0)$ is a holomorphic submersion and $\psi=\Psi\circ\xi\circ\iota_*$. Then Theorem \ref{T:subm} shows that $P$ is a smooth involutive bundle.
    In light of Proposition \ref{P:adap}, $P$ is an ac-polarization.

$\mathcal S$ is the zero set of the  section $\Lambda^{2n}(\psi_\iota)_*$ of the line bundle
$Hom(\Lambda^{2n} TN_i,\Lambda^{2n} TZ)$, where
$n=\dim_\bR M$. 
     This section is real-analytic, since the polar map is real-analytic. Also the polar map is known to be locally biholomorphic in the points of $M$, see for example Lemma 3.1 in \cite{Sz04}. Therefore 
     $\Lambda^{2n}(\psi_\iota)_*$ is not identically zero. Hence $\mathcal S$ is a proper subset of $TM$, so by real-analyticity, its interior must be empty. Since $\psi_\iota$ is a local diffeomorphism on $D_0$, the ac-polarization $P$ is actually a genuine  ac-structure on $D_0$.
   \end{proof}
    \begin{definition}\label{D:icompl-complex}
    The   complexification 
    $(Z,\mathfrak D,\iota)$ of $(M,\nabla)$
    is called {\it $\bC$-complete}, if 
    for every $v\in TM$, $\chi_v$ extends holomorphically to $\bC$; it is called {\it $i\bR$-complete} if for each $v$, $\chi_v$ extends holomorphically to a neighborhood of $i\bR$ or equivalently if $N_i(\iota,Z)=TM$.
 \end{definition} 
As a corollary of Theorem \ref{T:polarization} we get.
\begin{theorem}\label{T:icompl}
    Let $(M,\nabla)$ be a real-analytic Koszul manifold and  $(Z,\mathfrak D,\iota)$  an $i\bR$-complete  (or $\bC$-complete) complexification. Then 
    $P:=(\psi_\iota)^{-1}_*(T^{1,0}Z)$
defines an ac-polarization on $TM$ i.e.
    $\nabla$ is $\mathcal P$-complete.
\end{theorem}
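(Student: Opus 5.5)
This is essentially a direct corollary of Theorem~\ref{T:polarization}. The whole task is to check that the hypothesis of $i\bR$-completeness (or $\bC$-completeness) forces $N_i(\iota,Z)=TM$. Once that holds, Theorem~\ref{T:polarization} gives that $P=(\psi_\iota)^{-1}_*(T^{1,0}Z)$ is an ac-polarization on $N_i=TM$. By definition, this is exactly $\mathcal P$-completeness of $\nabla$.

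First, reduce the $\bC$-complete case to the $i\bR$-complete case. If $\chi_v$ extends holomorphically to all of $\bC$, it in particular extends to a neighborhood of $i\bR$. So a $\bC$-complete complexification is $i\bR$-complete.

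Next, show that $i\bR$-completeness implies $N_i(\iota,Z)=TM$. Let $v\in TM$. By hypothesis $\chi_v=\iota\circ\gamma_v$ extends holomorphically to an open set $U$ containing $i\bR$. There is a subtlety: $\chi_v$ is originally defined on the real interval $I_p$, and the extension must agree with it near $0$. Since $U$ is a neighborhood of $0$, $U\cap I_p$ contains a small interval around $0$, so the extension agrees with $\chi_v$ near $0$.

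To match the definition \eqref{E:Ni} literally, we need a single connected open set containing $[0,i]$ on which one holomorphic function extends $\chi_v$. I would take the connected component of $U$ containing $i\bR$; it contains $[0,i]\subset i\bR$. If the extension is given on a neighborhood of $i\bR$ together with the original domain, one can instead shrink to a connected neighborhood of the segment $[0,i]$. Either way, $v\in N_i(\iota,Z)$. This is precisely the ``equivalently'' clause of Definition~\ref{D:icompl-complex}, and I would make it explicit in this way.

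Finally, apply Theorem~\ref{T:polarization}. It yields that $N_i=TM$ is open (trivially here), that $\psi_\iota:TM\ra Z$ is real-analytic, and that $P$ is a smooth involutive polarization on $TM$ satisfying the adaptedness condition ${\Omega_v}_*(T^{1,0})\subset P$. The adaptedness comes from Proposition~\ref{P:adap}(a), since $\psi_\iota\circ\Omega_v$ is holomorphic. Hence the ac-polarization exists on all of $TM$, and $\nabla$ is $\mathcal P$-complete.

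I expect no real obstacle. The only point needing care is the identification of ``extends to a neighborhood of $i\bR$'' with membership in $N_i$ for every $v$, namely that the extension is compatible with $\chi_v$ near $0$ on a connected domain. This follows from uniqueness of holomorphic continuation along the segment $[0,i]$.
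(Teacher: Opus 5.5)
Your proposal is correct and follows the paper's route: the paper states this as an immediate corollary of Theorem~\ref{T:polarization}, since $i\bR$-completeness (and a fortiori $\bC$-completeness) is by definition equivalent to $N_i(\iota,Z)=TM$. Your remarks about connected domains and agreement with $\chi_v$ near $0$ only make the ``equivalently'' in Definition~\ref{D:icompl-complex} explicit.
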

\section{Left invariant connections}
\subsection{Left invariant connections and bilinear maps}\label{Ss:canonical}
Let $G$ be a real Lie group with unit element $e$ and Lie algebra  $\mathfrak g=T_eG$.
 Left invariant Koszul connections $\nabla$ on $TG$ and   $\mathbb R$-bilinear maps 
$\alpha:\mathfrak g\times\mathfrak g\ra\mathfrak g$ 
bijectively correspond to each other 
 by  
$\alpha(x,y):=(\nabla_xy)_e$, where
$x,  y\in T_eG$  (the same letters denote the left invariant vector field extensions as well). For a given $\alpha$,  $\overset{\alpha}\nabla$ denotes the corresponding connection. The vector field $\mathcal W$ on $\mathfrak g$, defined by the quadratic homogeneous polynomial map
 $\mathfrak g\ra \mathfrak g$, $x\mapsto -\alpha(x,x)$ 
 is called the {\it Euler-Arnold vector field}  of $\nabla$.

For a connection $\nabla$, let $\alpha=\alpha'+\beta$ be the decomposition to antisymmetric and symmetric parts.  $\nabla$ is torsion free iff $\alpha'(x,y)=\frac1{2}[x,y]$. 
The unique torsion free connection  with $\beta=0$ is  the {\it canonical (or Cartan-0) connection}, denoted by $\overset{c}  {\nabla} $. 
For a given $\alpha=\alpha'+\beta$, let $\widetilde{\alpha}(x,y):=\frac1{2}[x,y]+\beta(x,y)$. Then the connection $\overset{\widetilde{\alpha}}\nabla$ will be torsion free and has the same geodesics with the same parametrization as $\overset{\alpha}\nabla$
(see \cite{Sp79}, Ch6. Add.1). Since the ac-structure only depends on the geodesics, we can always assume that the connection we study is torsion free.

Let now $H$ be a complex Lie group, $J:\mathfrak h\ra\mathfrak h$ the almost complex structure and
$T^{1,0}_eH=\mathfrak h^{1,0}:=\{x-iJx\mid x\in\mathfrak h\}$.  Similarly to the real case, left invariant, holomorphic Koszul connections $\mathfrak D$ on $T^{1,0}H$ and      complex bilinear maps
$
\alpha^{1,0}:\mathfrak h^{1,0}\times\mathfrak h^{1,0}\ra\mathfrak h^{1,0},
$
bijectively correspond to each other,
where $\alpha^{1,0}(u,v):=(\mathfrak D_ uv)_e$ with $u, v\in \mathfrak h^{1,0}$.
For a holomorphic connection $\mathfrak D$,  the induced  left invariant real connection $\widehat\nabla$  will correspond to the bilinear map $\alpha:\mathfrak h\times\mathfrak h\ra\mathfrak h$, 
$\alpha(x,y):=2\Re\alpha^{1,0}(x^{1,0},y^{1,0})$. The  holomorphic 
vector field $\mathcal E$  on 
$\mathfrak h^{1,0} $, defined by the quadratic complex homogeneous polynomial map  $u\mapsto -\alpha^{1,0}(u,u)$, $u\in\mathfrak h^{1,0}$ is called the  {\it holomorphic   Euler-Arnold vector field}.
Using the complex vector space isomorphism
$\xi:(\mathfrak h,J)\ra \mathfrak h^{1,0}$, $x\mapsto x^{1,0}$, we shall also call $\mathcal V:=\xi^*\mathcal E$ the holomorphic Euler-Arnold vector field of $\mathfrak D$ defined on 
the vector space $(\mathfrak h,J)$ as a 
 complex manifold. The Euler-Arnold  vector field of the real connection $\widehat\nabla$  will be 2$\Re\xi^*\mathcal V$.

For a connection $\mathfrak D$,  $\alpha^{1,0}=\alpha'_\bC+\beta_\bC$ denotes the decomposition to complex bilinear antisymmetric and complex bilinear symmetric parts.  $\mathfrak D$ is torsion free iff 
$\alpha'_\bC(u,v)=\frac1{2}[u,v]$. 
As it was said in the real case is valid here as well: we can assume that $\mathfrak D$ is torsion free. 
The unique holomorphic left invariant torsion free connection with $\beta_\bC=0$ is  the {\it canonical  holomorphic (or Cartan) connection} $\overset{c}{\mathfrak D}$. Its corresponding bilinear map is $\alpha^{1,0}(u,v)=\frac1{2}[u,v]$, $u,v\in \mathfrak h^{1,0}$. Since
$[x^{1,0},y^{1,0}]=[x,y]^{1,0}$, for $x,y\in\mathfrak h$, the real left invariant Koszul connection induced by  the holomorphic connection $\overset{c}{\mathfrak D}$ will be $\overset{c}\nabla_H$, the  canonical  connection of $H$ considered as a real Lie group.
\subsection{Geodesics of left invariant connections on real Lie groups}\label{Ss:leftinvreal}
In this Subsection $G$ denotes a real Lie group with Lie algebra $\mathfrak g$, $\nabla$  a left invariant Koszul connection on $TG$ and $\alpha:\mathfrak g\times\mathfrak g\ra\mathfrak g$ the 
 corresponding bilinear map.
\begin{prop}\label{P:leftinvconn}   
Let $\gamma:(a,b)\ra G$ be a smooth curve and define $x:(a,b)\ra\mathfrak g$ by 
$x(t):=(L_{\gamma(t)}^{-1})_*(\dot\gamma(t))$. Then $\gamma$ is a $\nabla$-geodesic iff
$$\dot x+\alpha(x,x)=0,$$
i.e. iff $x$ is an integral curve of the Euler-Arnold vector field.
\end{prop}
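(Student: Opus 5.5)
The plan is to compute the covariant derivative of $\dot\gamma$ along $\gamma$ in a global left invariant frame, and compare it with the derivative of $x$. Choose a basis $e_1,\dots,e_n$ of $\mathfrak g$ and denote by the same letters the left invariant vector fields on $G$. Since $(L_{\gamma(t)})_*$ maps $T_eG$ onto $T_{\gamma(t)}G$ and carries $e_j$ to $e_j(\gamma(t))$, we get
\[
\dot\gamma(t)=\sum_j x^j(t)\,e_j(\gamma(t)),
\]
where $x(t)=\sum_j x^j(t)e_j$. The coefficients $x^j$ are smooth real functions on $(a,b)$.

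Next we apply the Leibniz rule for the induced covariant derivative $D_t$ along $\gamma$:
\[
D_t\dot\gamma=\sum_j \dot x^j(t)\,e_j(\gamma(t))+\sum_{j,k}x^j(t)x^k(t)\,(\nabla_{e_k}e_j)(\gamma(t)).
\]
Here we used that $D_t(e_j\circ\gamma)=\nabla_{\dot\gamma}e_j=\sum_k x^k\nabla_{e_k}e_j$, by tensoriality of $\nabla$ in the lower slot.

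The key step is to show that $\nabla_{e_k}e_j$ is again left invariant. This holds because $\nabla$ is left invariant, i.e. $(L_g)_*\nabla_XY=\nabla_{(L_g)_*X}(L_g)_*Y$, and $e_k,e_j$ are invariant. Its value at $e$ is $\alpha(e_k,e_j)$, so $(\nabla_{e_k}e_j)(\gamma(t))=(L_{\gamma(t)})_*\alpha(e_k,e_j)$. Using bilinearity of $\alpha$, the double sum becomes $(L_{\gamma(t)})_*\alpha(x(t),x(t))$. Altogether
\[
(L_{\gamma(t)}^{-1})_*\,D_t\dot\gamma=\dot x(t)+\alpha(x(t),x(t)).
\]

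Since $(L_{\gamma(t)}^{-1})_*$ is an isomorphism, $\gamma$ is a geodesic, i.e. $D_t\dot\gamma=0$, if and only if $\dot x+\alpha(x,x)=0$. By definition of $\mathcal W$ (the vector field $x\mapsto-\alpha(x,x)$), this is exactly the statement that $x$ is an integral curve of the Euler-Arnold vector field.

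The only point needing care is the order of arguments. With the convention $\alpha(x,y)=(\nabla_xy)_e$, the term is $\alpha(x,x)$ anyway, so the order causes no ambiguity here. I expect no real obstacle; the main point to justify is the left invariance of $\nabla_{e_k}e_j$.
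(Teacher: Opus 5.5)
Your proof is correct and follows essentially the same route as the paper: expand $\dot\gamma$ in a left invariant frame, apply the Leibniz rule for the covariant derivative along $\gamma$, and use left invariance of $\nabla$ to identify the connection term with $(L_{\gamma(t)})_*\alpha(x,x)$. The only cosmetic difference is that you expand $\nabla_{\dot\gamma}e_j$ fully in the frame and invoke left invariance of each $\nabla_{e_k}e_j$, whereas the paper writes $\nabla_{\dot\gamma(t)}E_j=(L_{\gamma(t)})_*(\nabla_{x(t)}E_j)$ directly.
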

\begin{proof}
    Let $e_1,\dots,e_n$ be  a basis of $T_eG=\mathfrak g$ and $E_j$ their left invariant extensions. Then
    $$   x(t)=\sum\limits^n_{j=1}f_j(t)e_j,\quad\text{and}\quad \dot\gamma(t)=(L_{\gamma(t)})_*(x(t))=\sum\limits^n_{j=1}\left.f_j(t)E_j\right|_{\gamma(t)}.
    $$
    with smooth functions $f_j:(a,b)\ra \mathbb R$.
    Denote by $\frac{D}{dt}$ the covariant derivative operator acting on vector fields defined along $\gamma$.
    Then
    $$
    \frac{D\dot\gamma}{dt}(t)=\sum\limits^n_{j=1}f'_j(t)E_j+\sum\limits^n_{j=1}f_j(t)\nabla_{\dot\gamma(t)}E_j.
    $$
    Since $\nabla$ and $E_j$ are left invariant, we have  
    $$    
    \nabla_{\dot\gamma(t)}E_j=
    \nabla_{(L_{\gamma(t)})_*(x(t))}E_j
    =(L_{\gamma(t)})_*(\nabla_{x(t)}E_j)
    $$
    Hence
$$\frac{D\dot\gamma}{dt}(t)=(L_{\gamma(t)})_*\left(\sum\limits^n_{j=1}f'_j(t) e_j\right)+
    \sum\limits^n_{j=1}f_j(t)(L_{\gamma(t)})_*\nabla_{x(t)}E_j\\
    =
    (L_{\gamma(t)})_*(\dot x(t)+\alpha(x(t),x(t)).
$$    
\end{proof}
As a corollary of Proposition~\ref{P:leftinvconn} we get the following well known statement (c.f. \cite{A66},\cite{A78}).
\begin{cor}\label{C:EAequation} The Levi-Civita connection  of a left invariant pseudo-Riemannian metric  $g$ is
 \begin{equation}\label{E:LeviCivita}
  \nabla_XY=\frac1{2}\{[X,Y]-ad^*_XY-ad^*_YX\}, \quad X, Y\in\mathfrak g.   
 \end{equation}      
       Let $\gamma:(a,b)\ra G$ be a smooth curve and  define $x:(a,b)\ra\mathfrak g$  by 
$x(t):=(L_{\gamma(t)}^{-1})_*(\dot\gamma(t))$. Then
     $\gamma$ is a geodesic iff
$$
\dot x=ad^*_xx.
$$ 
 The Euler-Arnold vector field of $\nabla$ is $\mathcal W(x)=-\alpha(x,x)=ad^*_xx$.
\end{cor}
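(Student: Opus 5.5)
Since Proposition~\ref{P:leftinvconn} already identifies geodesics with integral curves of the Euler-Arnold field $\mathcal W(x)=-\alpha(x,x)$, the plan is to compute $\alpha$ for the Levi-Civita connection and read off $\mathcal W(x)$. Formula \eqref{E:LeviCivita} gives $\alpha(X,Y)=(\nabla_XY)_e$ for left invariant $X,Y$. Here $ad^*_X$ denotes the $g$-adjoint of $ad_X$ on $\mathfrak g$, i.e. $g(ad^*_XY,Z)=g(Y,[X,Z])$, which is well defined because $g$ is nondegenerate.

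\textbf{Step 1: the Koszul formula.} For left invariant vector fields $X,Y,Z$ the functions $g(Y,Z)$ are constant, so the three derivative terms in the Koszul formula vanish. What remains is
\begin{equation*}
2g(\nabla_XY,Z)=g([X,Y],Z)-g([Y,Z],X)+g([Z,X],Y).
\end{equation*}
Rewrite the last two terms with the adjoint:
\begin{equation*}
-g([Y,Z],X)=-g(ad^*_YX,Z),\qquad g([Z,X],Y)=-g([X,Z],Y)=-g(ad^*_XY,Z).
\end{equation*}
Nondegeneracy of $g$ then yields $\nabla_XY=\frac12\{[X,Y]-ad^*_XY-ad^*_YX\}$, which is \eqref{E:LeviCivita}. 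This connection is left invariant and torsion free, its antisymmetric part being $\frac12[X,Y]$. Its bilinear map is therefore $\alpha(x,y)=\frac12\{[x,y]-ad^*_xy-ad^*_yx\}$.

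\textbf{Step 2: the Euler-Arnold field and the geodesic equation.} Setting $y=x$ kills the bracket term, so
\begin{equation*}
\alpha(x,x)=-ad^*_xx, \qquad \mathcal W(x)=-\alpha(x,x)=ad^*_xx.
\end{equation*}
Proposition~\ref{P:leftinvconn} then says that $\gamma$ is a geodesic iff $\dot x+\alpha(x,x)=0$, which is exactly $\dot x=ad^*_xx$.

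\textbf{Main point of care.} There is no real obstacle; the only thing to watch is the sign convention for $ad^*$, so that the two symmetric terms come out with the stated signs. I would fix $g(ad^*_XY,Z)=g(Y,ad_XZ)$ at the outset and verify it against the Koszul computation in Step 1.
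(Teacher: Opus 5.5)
Your proposal is correct and follows essentially the same route as the paper: apply the Koszul formula to left invariant fields so the derivative terms vanish, rewrite the bracket terms using the $g$-adjoint (your convention $g(ad^*_XY,Z)=g(Y,[X,Z])$ is the one the paper's formula implicitly uses), then set $y=x$ and invoke Proposition~\ref{P:leftinvconn}. Your sign checks in Step 1 are right and give exactly \eqref{E:LeviCivita}.
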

\begin{proof}
Let $X,Y,Z$ be arbitrary smooth vector fields on $G$.
Then Koszul's formula gives
\begin{equation}\label{E:nab}
\begin{split}
g(\nabla_XY,Z)=\frac1{2}\{Xg(Y,Z)-Zg(X,Y)+Yg(Z,X)+\\+g(Z,[X,Y])+g([Z,X],Y)+g(X,[Z,Y])\}.
\end{split}
\end{equation}
If $X,Y,Z$ are left invariant, the quantities $g(Y,Z), g(X,Y), g(Z,X)$ are all constant and formula~\eqref{E:nab} yields
\begin{equation}\label{E:nabla}
\nabla_XY=\frac1{2}\{[X,Y]-ad^*_XY-ad^*_YX\}.
\end{equation}
Now for fixed $t$ let $Y=X$ be the left invariant extension of $x(t)$.  From \eqref{E:nabla} we get 
$$
\alpha(x(t),x(t))=\left.\nabla_XX\right|_e=-\left.ad^*_XX\right|_e=-ad^*_ {x(t)} x(t)
$$
and Proposition~\ref{P:leftinvconn} finishes the proof.
\end{proof}
 Recall that the Maurer-Cartan form $\omega_G$  is the $\mathfrak g$ valued left invariant $1$-form on $TG$ defined by 
$$
\omega_G(v):=(L_c^{-1})_*v,\quad \text{where}\quad v\in T_cG.
$$
Hence the function $x(t)$ in Proposition~\ref{P:leftinvconn} can be written as $x(t)=\omega_G(\gamma'(t))$.
Theorem 3.7.1 and Theorem 3.5.2 in \cite{S00} yields the following.
\begin{prop}\label{P:fundthm} Let
$x:(a,b)\ra\mathfrak g$ be a smooth curve. Then there exists a smooth map $\gamma:(a,b)\ra G$  with 
\begin{equation}\label{E:MCeq}
    \omega_G(\gamma'(t))=x(t),\qquad t\in(a,b).
\end{equation}
    Furthermore if this holds for $\gamma_1,\gamma_2:(a,b)\ra G$, then  $\exists c\in G$, so that $\gamma_2=c\gamma_1$.
\end{prop}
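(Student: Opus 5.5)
We reduce the statement to a time-dependent linear ODE on $G$, solve it locally by the Frobenius theorem applied to the Maurer--Cartan form, and then globalize along $(a,b)$ using left invariance.

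Equation \eqref{E:MCeq} says $\gamma'(t)=(L_{\gamma(t)})_*x(t)$. Consider the time-dependent vector field $X_t(c):=(L_c)_*x(t)$ on $G$, which is left invariant for each fixed $t$. Equivalently, consider the vector field $\widetilde X(t,c)=(\partial_t,(L_c)_*x(t))$ on $(a,b)\times G$; it is smooth and invariant under left translations acting on the second factor. Solutions of \eqref{E:MCeq} are exactly the $G$-components of its integral curves parametrized by $t$.

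Fix $t_0\in(a,b)$. Local existence with $\gamma(t_0)=e$ is standard ODE theory. Equivalently, apply the Frobenius theorem to the $\mathfrak g$-valued $1$-form $\theta=\omega_G-x(t)\,dt$ on $(a,b)\times G$: since $d\omega_G+\frac12[\omega_G,\omega_G]=0$, the distribution $\ker\theta$ is involutive and its leaves are graphs of solutions. This is the content of the cited theorems of \cite{S00}.

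The main point is global existence on all of $(a,b)$, since integral curves of a time-dependent vector field could escape in finite time. Here left invariance saves us. Let $\gamma$ be a maximal solution on $(t_0,\beta)$ with $\beta<b$. On the compact interval $[\beta-\epsilon,\beta+\epsilon]\subset(a,b)$, the local existence theorem, applied uniformly (the vector field is left invariant, so the existence time for initial point $e$ works for every initial point $c$ by translating), gives $\delta>0$ such that for every $s\in[\beta-\epsilon,\beta]$ there is a solution $\sigma_s$ on $(s-\delta,s+\delta)$ with $\sigma_s(s)=e$. Now choose $s\in(\beta-\delta,\beta)$. Since $c\,\sigma_s$ is again a solution for any $c\in G$, the curve $\gamma(s)\sigma_s$ agrees with $\gamma$ at time $s$, hence on the overlap by uniqueness. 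It therefore extends $\gamma$ beyond $\beta$, a contradiction. The same argument applies to the left endpoint, so solutions exist on all of $(a,b)$.

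For uniqueness up to left translation, suppose $\gamma_1,\gamma_2$ both satisfy \eqref{E:MCeq} and fix $t_0$. Set $c:=\gamma_2(t_0)\gamma_1(t_0)^{-1}$. Then $c\gamma_1$ satisfies \eqref{E:MCeq}, because $\omega_G$ is left invariant, and it agrees with $\gamma_2$ at $t_0$. By uniqueness of integral curves of the smooth time-dependent vector field $X_t$, the set where $c\gamma_1$ and $\gamma_2$ agree is open and closed in the connected interval $(a,b)$, so $\gamma_2=c\gamma_1$ everywhere.

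Alternatively, one can compute directly that $\frac{d}{dt}\bigl(\gamma_2\gamma_1^{-1}\bigr)=0$. Using $\frac{d}{dt}\gamma_1^{-1}=-(R_{\gamma_1^{-1}})_*(L_{\gamma_1^{-1}})_*\gamma_1'$, the two terms in the derivative of the product cancel because both curves have the same left logarithmic derivative $x$.
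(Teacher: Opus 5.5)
Your proof is correct, but it takes a different route from the paper. The paper gives no argument of its own: it cites the fundamental theorem of calculus for Lie groups in \cite{S00}, Theorem 3.7.1 for existence and Theorem 3.5.2 for uniqueness up to left translation. In that framework the $\mathfrak g$-valued $1$-form $x(t)\,dt$ on $(a,b)$ satisfies the structural equation $d\omega+\frac12[\omega,\omega]=0$ vacuously, since an interval carries no nonzero $2$-forms. Because $(a,b)$ is simply connected, a global $\gamma$ with $\gamma^*\omega_G=x\,dt$ then exists.

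You instead work everything out from ODE theory. You get local existence for the time-dependent field $(L_c)_*x(t)$, and you globalize by a continuation argument in which left invariance rules out escape in finite time. This is essentially the one-dimensional version of Sharpe's global step. There, $G$-equivariance shows that the leaves of $\ker(\omega_G-\omega)$ on $M\times G$ project onto $M$ as coverings, and simple connectivity makes them graphs. Your direct check that $\frac{d}{dt}(\gamma_2\gamma_1^{-1})=0$ cleanly replaces Theorem 3.5.2.

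What each route buys: yours is elementary and self-contained. The citation is shorter and works for arbitrary simply connected domains. That general form is what the paper actually uses in the holomorphic analogue, Proposition~\ref{P:CMC}(b), where it relies on monodromy over a simply connected $D\subset\bC$.

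Two small remarks.
\begin{enumerate}
\item[(1)] The Frobenius detour is unnecessary. $\ker\theta$ has rank one, so it is automatically involutive, and the Maurer--Cartan equation plays no role.
\item[(2)] In the continuation step, uniformity of $\delta$ in the initial time $s$ comes from compactness of $[\beta-\epsilon,\beta]\times\{e\}$ together with openness of the flow domain of $\widetilde X$. Left invariance only gives uniformity in the initial point.
\end{enumerate}
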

When $G=GL(n,\bR)$ (or more generally any matrix Lie group), $\omega_G=a^{-1}da$, $a\in GL(n,\bR)$ and $\omega_G(\gamma'(t))=x(t)$ will be equivalent to $\gamma'=\gamma x$, that is a system of linear equations and the existence of $\gamma$ follows immediately.

Proposition~\ref{P:leftinvconn} and Proposition~\ref{P:fundthm} yields.
\begin{cor}\label{C:conncomplete} 
\
\begin{enumerate}
    \item[a)]
 Let $x_0\in\mathfrak g$, $t_0\in (a,b)$.  Then there exists a geodesic  $\gamma:(a,b)\ra G$ with $\gamma'(t_0)=x_0$ iff the system
\begin{equation}
   \dot x+\alpha(x,x)=0,\qquad x(0)=x_0.
   \end{equation}
   has a solution  defined on $(a,b)$.    
\item[b)] $\nabla$ is complete iff the corresponding Euler-Arnold vector field is complete.
\end{enumerate}
\end{cor}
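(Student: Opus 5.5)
Part (a) follows by combining Proposition~\ref{P:leftinvconn} with Proposition~\ref{P:fundthm}; part (b) then follows from (a) by taking the interval to be $\bR$.

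For (a), first the direction from geodesics to solutions. Suppose $\gamma:(a,b)\ra G$ is a $\nabla$-geodesic with $\gamma'(t_0)$ corresponding to $x_0$, meaning $\omega_G(\gamma'(t_0))=x_0$; left-translating if necessary, we may take $\gamma(t_0)=e$. Put $x(t):=\omega_G(\gamma'(t))$. By Proposition~\ref{P:leftinvconn}, $x$ solves $\dot x+\alpha(x,x)=0$ on all of $(a,b)$, and $x(t_0)=x_0$. The statement writes the initial condition as $x(0)=x_0$; I read this as the condition at $t_0$, i.e. $t_0=0$ after a translation of the parameter. This is harmless because the Euler-Arnold system is autonomous.

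Conversely, let $x:(a,b)\ra\mathfrak g$ solve the system with $x(t_0)=x_0$. Proposition~\ref{P:fundthm} gives a smooth $\gamma:(a,b)\ra G$ with $\omega_G(\gamma'(t))=x(t)$. By the uniqueness part of the same proposition we may replace $\gamma$ by $c\gamma$, so that $\gamma(t_0)=e$ and hence $\gamma'(t_0)=x_0$. Since $x=\omega_G(\gamma')$ solves the Euler-Arnold equation, Proposition~\ref{P:leftinvconn} shows that $\gamma$ is a geodesic on the whole interval $(a,b)$.

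For (b), $\nabla$ is complete iff every geodesic extends to $\bR$. By left invariance it suffices to consider geodesics through $e$, since $L_c$ maps geodesics to geodesics (the connection is left invariant). Applying (a) with $(a,b)=\bR$, every geodesic with initial velocity $x_0\in T_eG$ extends to $\bR$ iff the integral curve of $\mathcal W$ through $x_0$ is defined on $\bR$. This holds for every $x_0$ exactly when $\mathcal W$ is complete.

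The only step needing care is the existence part of Proposition~\ref{P:fundthm} on the whole interval $(a,b)$. This is a global statement: a developing map for a $\mathfrak g$-valued curve exists on any interval, because the equation $\gamma'=(L_\gamma)_*x$ is linear in nature, as in the matrix case. We take it as given from \cite{S00}. With it, the maximal domains of the geodesic and of the Euler-Arnold trajectory coincide.
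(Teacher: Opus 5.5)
Your proposal is correct and takes the paper's approach: the paper's proof simply combines Proposition~\ref{P:leftinvconn} with Proposition~\ref{P:fundthm}, and, like you, one must read the initial condition as $x(t_0)=x_0$. One small correction: you may replace $\gamma$ by $c\gamma$ because $\omega_G$ is left invariant, so $\omega_G((c\gamma)')=\omega_G(\gamma')$, and not because of the uniqueness clause of Proposition~\ref{P:fundthm}.
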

When $\nabla$ is the Levi-Civita connection of a left invariant pseudo-Riemannian metric, Corollary~\ref{C:conncomplete}(b) was shown in \cite{AP90}.
From Corollary~\ref{C:conncomplete} we get.
\begin{cor} 
 Let  $\beta:\mathfrak g\times\mathfrak g\ra\mathfrak g$ be a symmetric bilinear 
map. Suppose  the vector field $\mathcal W$, corresponding to $x\mapsto-\beta(x,x)$, $x\in \mathfrak g$ is complete.
 Then the left-invariant torsion free Koszul connection  defined by 
\begin{equation}
 \nabla_xy:=\frac{1}{2}[x,y]+\beta(x,y)\qquad x,y\in\mathfrak g.   
\end{equation}
is complete.    
\end{cor}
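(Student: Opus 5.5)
The corollary should follow from Corollary~\ref{C:conncomplete}(b) once we check that the connection $\nabla$ defined by $\nabla_xy=\frac12[x,y]+\beta(x,y)$ has the vector field $\mathcal W$ of the statement as its Euler-Arnold vector field. So the plan is to identify the bilinear map of $\nabla$, compute its Euler-Arnold field, and then apply that corollary.

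First, the bilinear map corresponding to $\nabla$ (Subsection~\ref{Ss:canonical}) is
$$\alpha(x,y)=(\nabla_xy)_e=\frac12[x,y]+\beta(x,y).$$
Its antisymmetric part is $\alpha'=\frac12[\cdot,\cdot]$ and its symmetric part is $\beta$. In particular $\nabla$ is torsion free, and it is well defined as a left invariant Koszul connection, since left invariant connections correspond bijectively to bilinear maps.

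Second, compute the Euler-Arnold vector field of $\nabla$. Because $[x,x]=0$, we get $-\alpha(x,x)=-\beta(x,x)$ for every $x\in\mathfrak g$. Hence the Euler-Arnold vector field of $\nabla$ is exactly the vector field $\mathcal W$ in the statement, which is complete by hypothesis.

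Third, Corollary~\ref{C:conncomplete}(b) says that $\nabla$ is complete iff its Euler-Arnold vector field is complete, so $\nabla$ is complete. If one prefers to argue directly, use Proposition~\ref{P:leftinvconn} and Proposition~\ref{P:fundthm}. For any $v\in T_cG$, set $x_0=(L_c^{-1})_*v$ and let $x:\bR\ra\mathfrak g$ be the complete integral curve of $\mathcal W$ with $x(0)=x_0$. Then solve $\omega_G(\gamma')=x$ on all of $\bR$, and left translate the solution so that $\gamma(0)=c$. The resulting $\gamma$ is a geodesic with $\dot\gamma(0)=v$ defined on $\bR$.

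There is no real obstacle here. The only point requiring care is the observation that the antisymmetric part of $\alpha$ drops out of the Euler-Arnold field, since $[x,x]=0$.
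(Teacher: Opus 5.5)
Your proposal is correct and is the paper's own argument. The paper also deduces this corollary directly from Corollary~\ref{C:conncomplete}(b), using exactly your observation that $[x,x]=0$ makes $\mathcal W$ the Euler--Arnold field of $\nabla$; your optional direct argument via Propositions~\ref{P:leftinvconn} and~\ref{P:fundthm} just unpacks how that corollary is proved.
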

\subsection{Holomorphic geodesics of left invariant holomorphic connections}\label{Ss:leftinvC}
In this subsection $H$ denotes a complex Lie group with Lie algebra $\mathfrak h$ and $\mathfrak D$ a left invariant holomorphic Koszul connection on $T^{1,0}H$ with corresponding $\bC$-bilinear form
$\alpha^{1,0}:\mathfrak h^{1,0}\times\mathfrak h^{1,0}\ra\mathfrak h^{1,0}$.
 With basically the same proof and straightforward changes in terminology,
the complex version of Proposition~\ref{P:leftinvconn} and Corollary~\ref{C:EAequation} are also true:
\begin{prop}\label{P:CEA}   Let $U\subset\bC$ be a domain and $\Gamma\in\mathcal O(U,H)$. Define 
 $f\in\mathcal O(U,h^{1,0})$  by $f(\zeta):=(L^{-1}_{\Gamma(\zeta)})_*(\Gamma'(\zeta)^{1,0})$. Then
\begin{enumerate}
    \item[a)] $\Gamma$ is a holomorphic $\mathfrak D$-geodesic iff $f'(\zeta)+\alpha^{1,0}(f(\zeta),f(\zeta))=0$.
\item[b)] Suppose $\mathfrak D$ is the holomorphic  Levi-Civita connection of the left invariant holomorphic Riemannian metric $q$. Then 
\begin{equation}
\mathfrak D_uv=\frac1{2}\{[u,v]-ad^*_uv-ad^*_vu\}, \quad u, v\in \mathfrak h^{1,0},
\end{equation}
and so
$\alpha^{1,0}(u,u)=-ad^*_uu$, where $*$ means  the adjoint w.r.t. the nonsingular complex bilinear form $q_e:T^{1,0}_eH\times T^{1,0}_eH\ra\bC$. Thus $\Gamma$ is a holomorphic $\mathcal D$-geodesic iff $f'=ad^*_ff$.
\end{enumerate}
\end{prop}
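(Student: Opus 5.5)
The plan is to copy the proofs of Proposition~\ref{P:leftinvconn} and Corollary~\ref{C:EAequation}, replacing real objects by holomorphic ones. Fix a basis $e_1,\dots,e_n$ of $\mathfrak h^{1,0}$ and let $E_j$ be their left invariant holomorphic $(1,0)$ extensions; these form a global holomorphic frame of $T^{1,0}H$. For $\Gamma\in\mathcal O(U,H)$ write
\[
f(\zeta)=\sum_j f_j(\zeta)e_j,\qquad \Gamma'(\zeta)^{1,0}=(L_{\Gamma(\zeta)})_*f(\zeta)=\sum_j f_j(\zeta)E_j|_{\Gamma(\zeta)} .
\]
The coefficients $f_j$ are holomorphic because $L_{\Gamma(\zeta)}^{-1}$ depends holomorphically on $\zeta$ and $E_j$ is a holomorphic frame.

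For part (a), I use the holomorphic covariant derivative along $\Gamma$: $\frac{\mathfrak D}{d\zeta}$ acting on holomorphic sections of $\Gamma^*T^{1,0}H$. By definition, $\Gamma$ is a holomorphic $\mathfrak D$-geodesic iff $\frac{\mathfrak D}{d\zeta}\Gamma'^{1,0}=0$. The holomorphic Leibniz rule, which uses $\partial$ of the holomorphic $f_j$, gives
\[
\frac{\mathfrak D}{d\zeta}\Gamma'^{1,0}=\sum_j f_j' E_j+\sum_j f_j\,\mathfrak D_{\Gamma'^{1,0}}E_j .
\]
Left invariance of $\mathfrak D$ and of $E_j$ gives
\[
\mathfrak D_{(L_{\Gamma})_*f}E_j=(L_\Gamma)_*(\mathfrak D_fE_j)_e .
\]
Summing with bilinearity, the right-hand side becomes $(L_\Gamma)_*(f'+\alpha^{1,0}(f,f))$, and since $(L_\Gamma)_*$ is injective, (a) follows.

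The one point needing care is that the holomorphic geodesic equation can be phrased through this covariant derivative along a holomorphic curve. I would justify it by pulling back $\mathfrak D$ to $U$ via $\Gamma$. Alternatively, one can note via \eqref{E:geod} that the restriction to real lines is a $\widehat\nabla$-geodesic and apply Proposition~\ref{P:leftinvconn} on $H$ as a real group, with $\alpha=2\Re\alpha^{1,0}$; the identity theorem then extends the resulting equation from $\mathbb R$-directions to $U$. I expect this bookkeeping to be the main obstacle, and it is minor.

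For part (b), I mirror Corollary~\ref{C:EAequation}. The holomorphic Levi-Civita connection of $q$ is the unique torsion free holomorphic connection with $\mathfrak D q=0$, so it satisfies the holomorphic Koszul formula with $q$ in place of $g$. For left invariant holomorphic vector fields $u,v,w$, the functions $q(u,v)$ etc.\ are constant. The Koszul formula then reduces to
\[
2q(\mathfrak D_uv,w)=q(w,[u,v])+q([w,u],v)+q(u,[w,v]).
\]
Writing $q([w,u],v)=-q(w,ad^*_uv)$ and $q(u,[w,v])=-q(w,ad^*_vu)$, with $ad^*$ the $q_e$-adjoint (which exists since $q_e$ is nondegenerate and complex bilinear), yields the stated formula. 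Setting $u=v$ gives $\alpha^{1,0}(u,u)=-ad^*_uu$, and part (a) then gives $f'=ad^*_ff$.
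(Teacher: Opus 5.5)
Your proposal is correct and is essentially the paper's own argument. The paper simply asserts that the proofs of Proposition~\ref{P:leftinvconn} and Corollary~\ref{C:EAequation} carry over with holomorphic left invariant frames, the holomorphic covariant derivative along $\Gamma$, and the holomorphic Koszul formula for $q$, and that is exactly what you do (your alternative route for (a), via \eqref{E:geod} and the identity theorem, is also sound but not needed).
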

The Maurer-Cartan form of $H$  is the  left invariant, $\mathfrak h^{1,0}$ valued holomorphic form $\omega_H$  defined by $\omega_H(u)=(L_a^{-1})_*u$, $u\in T^{1,0}_aH$. Hence 
\begin{equation}\label{E:MCisom}
    \omega_H:T^{1,0}_aH\ra \mathfrak h^{1,0}
\end{equation} 
is a $\bC$-linear isomorphism for every $a\in H$.
\begin{prop}\label{P:CMC} Let  $D\subset\bC$ be a domain. 
\begin{enumerate}
    \item[a)] If $\Gamma_1,\Gamma_2\in\mathcal O(D,H)$ and $\omega_H(\Gamma'_1)\equiv\omega_H(\Gamma'_2)$, then $\exists c\in H$, so that $\Gamma_2\equiv c\Gamma_1$.
\item[b)] Suppose $D$ is simply connected, $\zeta_0\in D$, $a\in H$ and $u\in\mathcal O(D,\mathfrak h^{1,0})$. Then $\exists!$ $\Gamma:(D,\zeta_0)\ra (H,a)$ holomorphic with $\omega_H(\Gamma')\equiv u$.
\end{enumerate}
    \end{prop}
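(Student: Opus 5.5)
Both parts reduce to a holomorphic version of the Maurer--Cartan equation, handled exactly as Proposition~\ref{P:fundthm} is in the real case, using holomorphic Frobenius (for existence) and a left-invariance computation (for uniqueness).

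For part (a), set $\Gamma:=\Gamma_1$ and consider the holomorphic map $h:D\ra H$, $h(\zeta):=\Gamma_2(\zeta)\Gamma_1(\zeta)^{-1}$. I will show that $h'\equiv 0$; since $D$ is connected, this makes $h$ constant, equal to some $c\in H$, so that $\Gamma_2=c\Gamma_1$. Differentiating the product with the Leibniz rule for the group multiplication $\mu:H\times H\ra H$ gives
$$h'=(R_{\Gamma_1^{-1}})_*\Gamma_2'+(L_{\Gamma_2})_*\bigl(\Gamma_1^{-1}\bigr)'.$$
For the second term, use $(\Gamma_1^{-1})'=-(L_{\Gamma_1^{-1}})_*(R_{\Gamma_1^{-1}})_*\Gamma_1'$. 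Now write $\Gamma_j'=(L_{\Gamma_j})_*u$ with the common $u=\omega_H(\Gamma_j')$. Both terms then become $\pm (R_{\Gamma_1^{-1}})_*(L_{\Gamma_2})_*u$, so they cancel and $h'=0$. All maps involved are holomorphic, so the computation may be done on $T^{1,0}$ directly. The only care needed is that left and right translations commute.

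For part (b), uniqueness follows from (a): two solutions differ by left multiplication by some $c$, and evaluating at $\zeta_0$ gives $c=e$. For existence, I will argue on $D\times H$ with the $\mathfrak h^{1,0}$-valued holomorphic 1-form
$$\theta:=\omega_H-u(\zeta)\,d\zeta.$$
Its kernel $E\subset T^{1,0}(D\times H)$ is a holomorphic subbundle of rank $1$ that is transversal to the $H$-directions. Because the rank is $1$ and the base is a complex curve, $E$ is automatically involutive, so no curvature condition has to be checked. By the holomorphic Frobenius theorem, the leaf of $E$ through $(\zeta_0,a)$ is locally the graph of a holomorphic $\Gamma$ solving $\omega_H(\Gamma')=u$.

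The main obstacle is extending this local solution to all of $D$. I would handle it as follows. Local solutions exist near every point with any prescribed initial value. By (a), any two local solutions on a connected overlap differ by left multiplication. Hence solutions can be analytically continued along every path in $D$, with no blow-up, since left translations act transitively and the local existence radius depends only on $u$ and not on the value in $H$. Equivalently, the projection of the leaf to $D$ is a covering map, because $E$ is a flat connection on the trivial bundle $D\times H\ra D$ that is invariant under left translation by $H$. Since $D$ is simply connected, the monodromy theorem gives a single-valued global $\Gamma$ with $\Gamma(\zeta_0)=a$. For matrix groups one can instead solve the linear equation $\Gamma'=\Gamma u$ directly; in general, appealing to the complex analogues of Theorems 3.7.1 and 3.5.2 in \cite{S00} gives the same result.
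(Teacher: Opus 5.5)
Your proposal is correct and follows essentially the same route as the paper. In (a) you give the standard left-invariance computation showing $\Gamma_2\Gamma_1^{-1}$ is constant, where the paper simply defers to the real case. In (b) you combine local solvability with (a) to continue solutions along paths, then conclude with the monodromy theorem; the only cosmetic difference is that you get local solutions from the rank-one kernel of $\omega_H-u\,d\zeta$ on $D\times H$, while the paper writes the same equation in holomorphic coordinates as the ODE $\Gamma'=A^{-1}(\Gamma)b$, which amounts to the same existence statement.
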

\begin{proof}
a) The proof of the real version of the statement (see \cite{S00}, Theorem 3.5.2) works in the complex case as well.

b) In light of part a) we only need to prove existence. First we show that we can solve the equation locally. 

Let $u_1,\dots, u_n\in T_e^{1,0}H$ be a $\bC$-basis and   $U_j$ the left invariant extension of $u_j$. 
Let $\zeta_0\in D$ and $b\in H$ be fixed.
Choose a local holomorphic system of coordinates $\psi:(V,b)\ra(\bC^n,0)$ in $H$ near $b$ and identify the points of $V$ with their image via $\psi$ in $\psi(V)=:G$. Denote the points of $G$   by $z$.
\begin{equation}   \omega_H(\left.\partial_{z_j}\right|_z)=\sum\limits^n_{k=1}a_{kj}(z)u_k
\end{equation} 
with some $a_{kj}\in\mathcal O(G,\bC)$. Due to \eqref{E:MCisom} the matrix $A(z)=(a_{kj}(z))$ will be invertible. 
Suppose $\Gamma:(W,\zeta_0)\ra (G,0)$ is  holomorphic, where $W\subset D$ is some open neighborhood of $\zeta_0$.  Then $\Gamma=(\gamma_1,\dots,\gamma_n)$ and
\begin{equation}    \omega_H(\Gamma'(\zeta))=\omega_H\left(\sum_j\gamma'_j(\zeta)\left.\partial_{z_j}\right|_{\Gamma(\zeta)}\right)=\sum_{j,k}\gamma'_j(\zeta)a_{kj}(\Gamma(\zeta))u_k.
\end{equation}
Therefore the equation
\begin{equation}
    \omega_H(\Gamma'(\zeta))=u(\zeta)=\sum b_k(\zeta)u_k, \quad \zeta\in W
\end{equation}
is equivalent to
$$b(\zeta)=A(\Gamma(\zeta))\Gamma'(\zeta).$$
Since $A$ is invertible, this latter equation can be rewritten as
$$
\Gamma'(\zeta)=A^{-1}(\Gamma(\zeta))b(\zeta).
$$
This is a holomorphic system of ODE in the unknown $\Gamma$ that   has a solution in some neighborhood $W$ (see \cite{Hi97}). Now part (a) with local solvability yields that  starting with a local solution of $\omega_H(\Gamma')=u$ in a neighborhood of the point $a$, this solution can be analytically continued along any  continuous curve in $D$. Since $D$ is simply connected, the monodromy theorem for holomorphic maps  guarantees a global solution  $\Gamma:D\ra H$.
\end{proof}
Proposition~\ref{P:CEA} and Proposition~\ref{P:CMC} imply.
\begin{cor}\label{C:Ccomplete} 
\
\begin{enumerate}
    \item[a)] Suppose $0\in D\subset\bC$ is a simply connected domain and $u_0\in \mathfrak h^{1,0}$. There exists a holomorphic $\mathfrak D$-geodesic
$\Gamma:D\ra H$ with $\Gamma'(0)=u_0$ iff the Euler-Arnold vector field $\mathcal E$ has a holomorphic trajectory $u:D\ra \mathfrak h^{1,0}$ with initial condition $u(0)=u_0$.   
\item[b)]
 $\mathfrak D$ is $\bC$ complete iff $\mathcal E$  is $\bC$-complete.  
 \end{enumerate}
\end{cor}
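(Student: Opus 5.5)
The plan is to read Corollary~\ref{C:Ccomplete} as the holomorphic analogue of Corollary~\ref{C:conncomplete}. Part (a) follows by combining Proposition~\ref{P:CEA}(a) with Proposition~\ref{P:CMC}(b). Part (b) then follows from (a) applied with $D=\bC$.

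For (a), necessity goes as follows. Let $\Gamma:D\ra H$ be a holomorphic $\mathfrak D$-geodesic with $\Gamma'(0)=u_0$; we read this as $\omega_H(\Gamma'(0)^{1,0})=u_0$, which is the same as $\Gamma'(0)^{1,0}=u_0$ when $\Gamma(0)=e$. Put $u(\zeta):=\omega_H(\Gamma'(\zeta)^{1,0})=(L^{-1}_{\Gamma(\zeta)})_*\Gamma'(\zeta)^{1,0}$. This is holomorphic on all of $D$. By Proposition~\ref{P:CEA}(a) it satisfies $u'=-\alpha^{1,0}(u,u)$, so $u$ is a holomorphic trajectory of $\mathcal E$ on $D$ with $u(0)=u_0$.

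For sufficiency, let $u:D\ra\mathfrak h^{1,0}$ be a trajectory of $\mathcal E$ with $u(0)=u_0$. Since $D$ is simply connected, Proposition~\ref{P:CMC}(b) gives a holomorphic $\Gamma:(D,0)\ra(H,e)$ with $\omega_H(\Gamma')\equiv u$. We may take any base point $a$ and left translate, since left translates of geodesics are geodesics. Proposition~\ref{P:CEA}(a) then shows that $\Gamma$ is a $\mathfrak D$-geodesic, and its initial velocity is $u_0$. The one point to check carefully is that the $\Gamma'$ in Proposition~\ref{P:CMC} means the $(1,0)$-velocity $\Gamma'^{1,0}$, so the two propositions are applied to the same object. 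This is a matter of notation, not a real obstacle.

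For (b), suppose first that $\mathfrak D$ is $\bC$-complete. Take any $u_0\in\mathfrak h^{1,0}$. The geodesic with initial $(1,0)$-velocity $u_0$ at $e$ is defined on $\bC$, and the necessity part of (a) with $D=\bC$ gives a trajectory of $\mathcal E$ through $u_0$ defined on $\bC$. So $\mathcal E$ is $\bC$-complete.

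Conversely, suppose $\mathcal E$ is $\bC$-complete, and let $w\in T^{1,0}_aH$. Set $u_0:=\omega_H(w)$. The sufficiency part of (a) with $D=\bC$ and base point $a$ gives an entire geodesic $\Gamma$ with $\Gamma(0)=a$ and $\omega_H(\Gamma'(0))=u_0$, that is, $\Gamma'(0)^{1,0}=w$. By uniqueness of holomorphic geodesics with given initial data, this $\Gamma$ is the maximal geodesic $\Gamma_w$. Hence every holomorphic geodesic extends to $\bC$.

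I do not expect a substantive obstacle. The only delicate points are the simple connectivity of $D$, which is needed to globalize via the monodromy argument already contained in Proposition~\ref{P:CMC}(b), and the identification of $\Gamma'$ with $\Gamma'^{1,0}$.
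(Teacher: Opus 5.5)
Your proposal is correct and follows the paper's own route. The paper obtains the corollary directly from Proposition~\ref{P:CEA}(a) and Proposition~\ref{P:CMC}(b), exactly as you do, with (b) being the case $D=\bC$ of (a) together with uniqueness of holomorphic geodesics.
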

From this we get.
\begin{cor} Let 
$\beta_\bC:\mathfrak h^{1,0}\times \mathfrak h^{1,0}\ra\mathfrak h^{1,0}$ be a symmetric, complex bilinear map. Suppose
   the holomorphic $(1,0)$ vector field $\mathcal V$ on  
$\mathfrak h^{1,0}$,
corresponding to the  map $u\mapsto -\beta_\bC(u,u)$ is $\bC$ complete. Then the left invariant torsion free holomorphic Koszul connection on $T^{1,0}H$ defined by
\begin{equation}
    \mathfrak D_uv:=\frac{1}{2}[u,v]+\beta_\bC(u,v),\qquad u,v\in\mathfrak h^{1,0}
\end{equation}
is $\bC$ complete.
\end{cor}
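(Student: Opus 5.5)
The statement is the last corollary: if the holomorphic vector field $u\mapsto-\beta_\bC(u,u)$ is $\bC$-complete, then the torsion free connection $\mathfrak D_uv=\frac12[u,v]+\beta_\bC(u,v)$ is $\bC$-complete. The plan is to reduce it to Corollary~\ref{C:Ccomplete}(b), which says that $\mathfrak D$ is $\bC$-complete iff its holomorphic Euler-Arnold vector field $\mathcal E$ is $\bC$-complete. So everything comes down to identifying $\mathcal E$ for this particular $\mathfrak D$.

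First we confirm that $\mathfrak D$ is a genuine left invariant holomorphic Koszul connection. By the bijection recalled in Subsection~\ref{Ss:canonical}, left invariant holomorphic connections correspond to complex bilinear maps on $\mathfrak h^{1,0}$. The map
$$
\alpha^{1,0}(u,v):=\tfrac12[u,v]+\beta_\bC(u,v)
$$
is complex bilinear, because the bracket on $\mathfrak h^{1,0}$ is $\bC$-bilinear and $\beta_\bC$ is assumed to be. Its antisymmetric part is $\frac12[u,v]$, so by the criterion stated there, $\mathfrak D$ is torsion free.

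Next we compute the holomorphic Euler-Arnold field. By definition $\mathcal E(u)=-\alpha^{1,0}(u,u)$, and since $[u,u]=0$ this gives
$$
\mathcal E(u)=-\beta_\bC(u,u).
$$
Hence $\mathcal E$ is exactly the field $\mathcal V$ of the hypothesis, and $\mathcal V$ is $\bC$-complete by assumption. Corollary~\ref{C:Ccomplete}(b) then yields that $\mathfrak D$ is $\bC$-complete.

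If one prefers to argue directly, take $u_0\in\mathfrak h^{1,0}$. The trajectory $u:\bC\to\mathfrak h^{1,0}$ of $\mathcal V$ with $u(0)=u_0$ is entire. Since $\bC$ is simply connected, Proposition~\ref{P:CMC}(b) gives a holomorphic $\Gamma:\bC\to H$ with $\Gamma(0)=a$ and $\omega_H(\Gamma')=u$. Proposition~\ref{P:CEA}(a) shows that $\Gamma$ is a $\mathfrak D$-geodesic. Left translation handles an arbitrary base point, so every holomorphic geodesic is defined on all of $\bC$.

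There is no real obstacle. The only point needing care is the convention that $\mathcal V$ is the field on $\mathfrak h^{1,0}$ itself, rather than its pullback $\xi^*$ to $(\mathfrak h,J)$. Completeness is unchanged under the biholomorphism $\xi$, so this causes no difficulty.
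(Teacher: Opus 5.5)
Your proposal is correct and follows the paper's route: the paper obtains this corollary directly from Corollary~\ref{C:Ccomplete}(b), using exactly your observation that $[u,u]=0$, so the holomorphic Euler--Arnold field $-\alpha^{1,0}(u,u)$ equals $-\beta_\bC(u,u)$. Your optional direct argument via Propositions~\ref{P:CMC}(b) and~\ref{P:CEA}(a) just unpacks the proof of that corollary.
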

Corollary~\ref{C:Ccomplete} motivates our interest in $\bC$-complete and the more general $i\bR$-complete vector fields (c.f. Corollary~\ref{c:ircomplete}) the calculations and constructions of such vector fields are collected in  Section~\ref{S:vfields}.
\section{ \texorpdfstring{$\mathcal P$}P-complete  connections}
\subsection{Lift of a left invariant connection}\label{Ss:liftconn}
 Let $G$ be a real Lie group and $\widetilde{G}$  its universal cover with covering map $\sigma:\widetilde{G}\ra G$. 
    Denote the Lie algebra of $G$ (resp. of $\widetilde G$) by  $\mathfrak g$ (resp. $\widetilde{\mathfrak{g}}$) and the isomorphism $\sigma_*|_{\widetilde{\mathfrak{g}}}:\widetilde{\mathfrak{g}}\rightarrow\mathfrak{g}$ by $\varphi$. 
    Let $\nabla$ be a left invariant Koszul connection on $TG$ with corresponding bilinear map $\alpha:\mathfrak g\times\mathfrak g\rightarrow \mathfrak g$. 
Define
     $\widetilde \alpha:\widetilde{\mathfrak{g}}\times\widetilde{\mathfrak{g}}\rightarrow \widetilde{\mathfrak{g}}$ by  $\widetilde \alpha(\widetilde x,\widetilde y):=\varphi^{-1}\alpha(\varphi(\widetilde x),\varphi(\widetilde y))$ and let
    $\widetilde{\nabla}$ be the corresponding left-invariant Koszul connection on $T\widetilde G$.
\begin{prop}\label{P:univcconn}
     A smooth curve $\widetilde \gamma:I\rightarrow \widetilde G$ is a $\widetilde\nabla$-geodesic iff $\gamma=\sigma\circ \widetilde  \gamma$ is a $\nabla$-geodesic.
    Let $\widetilde v\in T\widetilde G$  and $\widetilde \gamma_{\widetilde v}:I\ra \widetilde G$ be the $\widetilde \nabla$-geodesic with $\widetilde \gamma'_{\widetilde v}(0)=\widetilde v$, 
   $v=\sigma_*\widetilde v$ and $\gamma_v=\sigma\circ\widetilde\gamma_{\widetilde v}$. Then ($\Omega_v$ is from \eqref{E:ueps})
   \begin{equation}\label{E:omegalift}
   \Omega_v=\sigma_*\circ\Omega_{\widetilde v}.  
   \end{equation}
\end{prop}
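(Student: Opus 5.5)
The plan is to reduce everything to the Maurer--Cartan description of geodesics in Proposition~\ref{P:leftinvconn}, exploiting that $\sigma$ is a local group isomorphism. For a curve $\widetilde\gamma$ in $\widetilde G$, put $\gamma=\sigma\circ\widetilde\gamma$. Since $\sigma$ is a homomorphism, $\sigma\circ L_{\widetilde c}=L_{\sigma(\widetilde c)}\circ\sigma$. Differentiating this at the identity gives $\omega_G(\sigma_*\widetilde w)=\varphi(\omega_{\widetilde G}(\widetilde w))$ for every $\widetilde w\in T\widetilde G$. Hence, setting $\widetilde x(t)=\omega_{\widetilde G}(\widetilde\gamma'(t))$ and $x(t)=\omega_G(\gamma'(t))$, we get $x=\varphi\circ\widetilde x$.

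By definition of $\widetilde\alpha$, and since $\varphi$ is a linear isomorphism,
\[
\dot x+\alpha(x,x)=\varphi\bigl(\dot{\widetilde x}+\widetilde\alpha(\widetilde x,\widetilde x)\bigr).
\]
So the left side vanishes iff the right side does. Applying Proposition~\ref{P:leftinvconn} on both $G$ and $\widetilde G$, $\widetilde\gamma$ is a $\widetilde\nabla$-geodesic iff $\gamma$ is a $\nabla$-geodesic. This gives the first assertion.

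For \eqref{E:omegalift}, take $\widetilde v$ and $v=\sigma_*\widetilde v$. The curve $\sigma\circ\widetilde\gamma_{\widetilde v}$ is a $\nabla$-geodesic with initial velocity $v$, so by uniqueness it is $\gamma_v$ on the common interval. One needs the domains to agree, i.e. the maximal intervals of $\widetilde\gamma_{\widetilde v}$ and $\gamma_v$ coincide. For this, note that any $\nabla$-geodesic $\gamma_v$ lifts through the covering $\sigma$ to a curve in $\widetilde G$ starting at the base point of $\widetilde v$, by the path lifting property. By the first part the lift is a $\widetilde\nabla$-geodesic with initial velocity $\widetilde v$, since $\sigma_*$ is injective on tangent spaces. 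Hence $I$ is the same for both; alternatively one can cite Corollary~\ref{C:conncomplete}(a), since both geodesics are governed by the same ODE after applying $\varphi$.

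Then for $t+is\in\bC I$,
\[
\sigma_*\Omega_{\widetilde v}(t+is)=\sigma_*\bigl(s\,\dot{\widetilde\gamma}_{\widetilde v}(t)\bigr)=s\,\tfrac{d}{dt}(\sigma\circ\widetilde\gamma_{\widetilde v})(t)=s\,\dot\gamma_v(t)=\Omega_v(t+is),
\]
using linearity of $\sigma_*$ on fibers. The only mildly delicate step is the equality of maximal domains, handled by path lifting as above; the rest is bookkeeping with the Maurer--Cartan forms.
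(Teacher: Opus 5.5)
Your proof is correct and follows the paper's argument. The identity $\varphi\circ\omega_{\widetilde G}=\omega_G\circ\sigma_*$ gives $x=\varphi(\widetilde x)$, and linearity of $\varphi$ together with the definition of $\widetilde\alpha$ transports the Euler--Arnold equation of Proposition~\ref{P:leftinvconn} between $\widetilde G$ and $G$; \eqref{E:omegalift} is then a direct computation. Your path-lifting check that the maximal intervals of $\widetilde\gamma_{\widetilde v}$ and $\gamma_v$ coincide is a detail the paper leaves implicit (it only says \eqref{E:omegalift} follows from the definitions), and you handle it correctly.
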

\begin{proof}
    Let $\omega_G$ (resp. $\omega_{\widetilde G}$) be the Maurer-Cartan form  of $G$ (resp. $\widetilde G$). Then $\varphi\omega_{\widetilde G}=\omega_G\circ\sigma_*$, since $\sigma$ is a homomorphism.  Let $\widetilde x:=\omega_{\widetilde G}(\dot {\widetilde{\gamma}})$ and  $x:=\omega_G(\gamma)$. Then
    \begin{equation}\label{E:covgeod}
        \varphi(\widetilde x)=\varphi(\omega_{\widetilde G}(\dot{\widetilde{\gamma}}))=\omega_G(\sigma_*\dot{\widetilde{\gamma}})=\omega_G(\dot \gamma)=x.
    \end{equation}
   Since $\varphi$ is linear, \eqref{E:covgeod} yields $\varphi(\dot{ \widetilde x})=\dot x$ and then from Proposition~\ref{P:leftinvconn} we get
    \begin{equation}
        \begin{split}
            \widetilde\gamma\text{ is a }\widetilde\nabla \text{-geodesic}&\iff \dot{\widetilde{x}}=-\widetilde\alpha(\widetilde x, \widetilde x) 
            \iff \varphi(\dot{\widetilde{x}})=\varphi(-\widetilde\alpha(\widetilde x, \widetilde x))\\ &\iff
            \dot{x}=-\alpha(x,x) 
            \iff \gamma \text{ is a }\nabla\text{-geodesic}.
        \end{split}
    \end{equation}
\eqref{E:omegalift} follows from the definitions.    
\end{proof}
\subsection{Universal complexification of a real Lie group}\label{Ss:univcompl}
 Recall, that a   connected, real Lie group $G$ always admits a  universal complexification, i.e. a complex Lie group $G_\bC$ with a  continuous homomorphism  $\eta:G\ra G_\bC$ that has the following property: for   any other complex Lie group $S$ and  continuous homomorphism
$\phi:G\ra S$ there exists a unique continuous homomorphism $\psi:G_\bC \ra S$  with $\phi=\psi\circ\eta$. When $G$ is compact or solvable, $\eta$ is a monomorphism but in general the kernel of $\eta$ can be nontrivial, even positive dimensional. The problem with $G_\bC$ is that it is not very explicit. Nevertheless its standard construction is as follows (\cite{Ho66}). Let $\widetilde G$ be the universal cover of $G$ with covering homomorphism $\sigma:\widetilde G\ra G$ and denote by $\mathfrak h$ the complexified Lie algebra, i.e.
$\mathfrak h:=\widetilde{\mathfrak g}\oplus\widetilde{\mathfrak g}$ with complex structure
$J:\mathfrak h\ra\mathfrak h$, $x\oplus y\mapsto -y\oplus x$. Let
$H$ be the unique simply connected complex Lie group with Lie algebra $\mathfrak h$, $\beta:\mathfrak g\ra \mathfrak h$, $x\mapsto x\oplus 0$  the canonical monomorphism and $\iota:\widetilde G\ra H$ the unique homomorphism with $\left.\iota_*\right|_{\mathfrak g}=\beta$. 
The universal complexification of $\widetilde G$ is in fact $\iota:\widetilde G\ra H$.
Since $\iota_*=\beta$ is a monomorphism,  $\iota$ will be an immersion. 
Let $B\triangleleft H$ be the smallest closed, complex, normal subgroup that contains $\iota(\ker \sigma)$. Denote by $pr:H\ra G_\bC$ the projection map. This defines a holomorphic principal bundle over $H/B$. Since $\ker\sigma\le\ker pr\circ\iota$,  we get a well defined induced homomorphism
$\eta:G\ra H/B$. Then 
$G_\bC:=H/B $  with $\eta:G\ra G_\bC$ will be the universal complexification of $G$.   $\eta(G)$ is  a closed, maximally totally real subgroup in $G_\bC$. From the definition we get
\begin{equation}\label{E:etadcomm}   
\eta\circ\sigma=pr\circ\iota.
\end{equation}
\subsection{Complexification of a left invariant Koszul connection  }\label{Ss:groupcompl}
Let $G$ be a  connected real Lie group with Lie algebra 
$\mathfrak g$ and $\eta:G\ra G_\bC$  the universal complexification as in Subsection~\ref{Ss:univcompl}. Suppose that
$\eta_*:T_eG\ra T_eG_\bC$ is a monomorphism (this holds if $G$ is either simply connected, or compact, or solvable or semisimple). Then $\eta_*(\mathfrak g)$ is a maximally totally real Lie subalgebra in $\mathfrak h$ (the Lie algebra of $G_\bC$) and  $\eta$ is an immersion.
Let 
$ \nabla$ be a  left invariant Koszul connection  on $TG$ with corresponding bilinear map $ \alpha:\mathfrak{g}\times\mathfrak g\ra \mathfrak g$ 
 and
 $\alpha_{\mathbb C}:\mathfrak h\times\mathfrak h\ra\mathfrak h$  the complex bilinear extension of $\alpha$, i.e. $\alpha^\bC(\eta_* u ,\eta_* v)=\alpha(u,v)$, $u,v\in\mathfrak h$.
 Denote by 
$\alpha^{1,0}:\mathfrak h^{1,0}\times\mathfrak h^{1,0}\ra\mathfrak h^{1,0}$ the map $(u^{1,0},v^{1,0})\mapsto (\alpha_\bC(u,v))^{1,0}$, $u,v\in\mathfrak h$   and let
$\mathfrak D$ be the corresponding
  left invariant holomorphic Koszul connection  on $T^{1,0}G_\bC$.  The underlying  real connection $\widehat{\nabla}$ on $TG_\bC$  (cf. Subsection \ref{Ss:geodflow}) then corresponds to the map $\alpha_\bC$, considered as an $\bR$-bilinear map.
Then
 $(G_\bC,\mathfrak D,\eta)$ will be  a complexification of $(G,\nabla)$ in the sense of Definition \ref{D:complexification}. 
\subsection{Ac-polarizations of left invariant connections}\label{Ss:acpolleftinv}
\begin{theorem}\label{T:eta-polarization}
Let $G$ be a connected, real Lie group and $\nabla$ a left invariant Koszul connection on $TG$. Let $\eta:G\ra G_\bC$ be the universal complexification. 
Then 
$N_i(\eta,G_\bC)\subset TG$ is open,  the associated polar map (cf. \eqref{E:Ni})
$\psi_\eta:N_i(\eta,G_\bC)\ra G_\bC$ is real-analytic
 and
$P:=(\psi_\eta)^{-1}_*(T^{1,0}G_\bC)$ is a $G$-invariant ac-polarization on  $N_i(\eta,G_\bC)$. 
\end{theorem}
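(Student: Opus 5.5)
The plan is to reduce to Theorem~\ref{T:polarization} and Theorem~\ref{T:subm} by working on the universal cover, where the universal complexification is an honest immersion. Let $\sigma:\widetilde G\ra G$, $\iota:\widetilde G\ra H$ and $pr:H\ra G_\bC=H/B$ be as in Subsection~\ref{Ss:univcompl}, so that $\eta\circ\sigma=pr\circ\iota$ by \eqref{E:etadcomm}. Lift $\nabla$ to the connection $\widetilde\nabla$ of Subsection~\ref{Ss:liftconn}. Since $\iota_*=\beta$ is injective, the construction of Subsection~\ref{Ss:groupcompl} applies to $(\widetilde G,\widetilde\nabla)$ with $H$ in place of $G_\bC$. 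This gives a complexification $(H,\mathfrak D,\iota)$, and Theorem~\ref{T:polarization} yields an open fiberwise $*$-domain $N_i(\iota,H)\subset T\widetilde G$ with real-analytic polar map $\psi_\iota$. When $\eta_*$ is already injective, we do not need the cover: $(G_\bC,\mathfrak D,\eta)$ is itself a complexification, and everything except $G$-invariance follows directly from Theorem~\ref{T:polarization}.

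In the general case I would first compare the two domains. By Proposition~\ref{P:univcconn}, $\gamma_v=\sigma\circ\widetilde\gamma_{\widetilde v}$, hence $\eta\circ\gamma_v=pr\circ(\iota\circ\widetilde\gamma_{\widetilde v})$. Consequently every $\widetilde v\in N_i(\iota,H)$ satisfies $\sigma_*\widetilde v\in N_i(\eta,G_\bC)$ and $\psi_\eta\circ\sigma_*=pr\circ\psi_\iota$.

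The converse inclusion $\sigma_*^{-1}(N_i(\eta,G_\bC))\subset N_i(\iota,H)$ is the step I expect to be the main obstacle. When $B$ is not discrete, a holomorphic extension of $pr\circ\iota\circ\widetilde\gamma$ need not lift automatically to a geodesic extension in $H$. I would attack it through Maurer--Cartan forms. Suppose $\eta\circ\gamma_v$ extends to a simply connected neighborhood $U$ of $[0,i]$. Its Maurer--Cartan form $y$ is then a holomorphic curve in $\mathrm{Lie}(G_\bC)=\mathfrak h/\mathfrak b$ which, on the real points, equals the image of the Euler--Arnold trajectory $x$. I would show that $x$ itself continues holomorphically along $U$: the holomorphic Euler--Arnold equation, read modulo $\mathfrak b$ together with the $\mathfrak b$-component, should give an ODE whose solution exists on all of $U$. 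Proposition~\ref{P:CMC}(b) then produces the lift $\Gamma:U\ra H$. If this direct argument fails, the fallback is to work only with $N_i(\iota,H)$ and its image $\sigma_*(N_i(\iota,H))$.

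With the domains identified, openness of $N_i(\eta,G_\bC)$ follows because $\sigma_*$ is a local diffeomorphism and $N_i(\iota,H)$ is open. Real-analyticity of $\psi_\eta$ follows from the formula $\psi_\eta\circ\sigma_*=pr\circ\psi_\iota$. For the polarization, note that locally $\psi_\eta\circ\sigma_*=pr\circ\Psi\circ\xi\circ\iota_*$, where $\Psi:G_i\ra H$ is the holomorphic submersion of Proposition~\ref{P:Giopen}. The composite $pr\circ\Psi$ is again a holomorphic submersion and $\xi\circ\iota_*$ is a maximally totally real immersion, so Theorem~\ref{T:subm} shows that $P$ is a smooth involutive subbundle. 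Proposition~\ref{P:adap}(c) then makes $P$ adapted, and counting ranks ($\operatorname{rank}P=\dim_\bR G$) shows it is a polarization.

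Finally, for $G$-invariance I would use left invariance of $\nabla$: for $g\in G$, the curve $L_g\circ\gamma_v$ is the geodesic $\gamma_{(L_g)_*v}$. This gives $\psi_\eta\circ(L_g)_*=L_{\eta(g)}\circ\psi_\eta$. Since $L_{\eta(g)}$ is biholomorphic, it preserves $T^{1,0}G_\bC$, and therefore $(L_g)_*$ preserves $P$.
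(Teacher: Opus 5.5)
\textbf{When $\eta_*$ is injective.} Here your argument is complete, and it is shorter than the paper's. Subsection~\ref{Ss:groupcompl} makes $(G_\bC,\mathfrak D,\eta)$ a complexification, so Theorem~\ref{T:polarization} gives openness, real-analyticity and the ac-polarization at once. Your identity $\psi_\eta\circ(L_g)_*=L_{\eta(g)}\circ\psi_\eta$ is exactly the paper's equivariance step. The paper instead always passes through $\widetilde G$, Lemma~\ref{l:claim} and $(\sigma_*)^{-1}_*P=\widetilde P$. In the injective case $B$ is discrete: its Lie algebra $\mathfrak b$ is stable under the conjugation of $H$, so it is the complexification of $\mathfrak b\cap\iota_*\widetilde{\mathfrak g}\cong\ker\eta_*$. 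Hence $pr$ is a covering, the lift $\chi$ in Lemma~\ref{l:claim} is ordinary lifting over a convex neighbourhood of $[0,i]$, and $(pr_*)^{-1}(T^{1,0}G_\bC)=T^{1,0}H$. So in this case the two routes agree.

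\textbf{When $\ker\eta_*\neq0$.} Here both steps you were unsure about fail, and the failure is not technical. Let $n=\dim G$ and $d=\dim_\bC\mathfrak b$, so $\dim_\bC G_\bC=n-d$ and $d\geq1$.

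First, the converse inclusion is false. Take $0\neq e\in\ker\eta_*$, a linear complement $\mathfrak g=U\oplus\bR e$, and write $x=u+ye$. Let $\alpha$ be such that the Euler--Arnold field is $\dot u=0$, $\dot y=y^2-q(u)$, where $q$ is a quadratic form with $q(u_0)=c^2$ and $c\geq\pi/2$. For $v=(L_a)_*u_0$ the trajectory has $y(t)=-c\tanh(ct)$, which has a pole at $i\pi/(2c)\in[0,i]$, so $\widetilde v\notin N_i(\iota,H)$. On the other hand $\eta_*x(t)=\eta_*u_0$, so $\eta\circ\gamma_v(t)=\eta(a)\exp_{G_\bC}(t\,\eta_*u_0)$ is entire and $v\in N_i(\eta,G_\bC)$. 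No ODE read modulo $\mathfrak b$ can control the $\mathfrak b$-component.

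Second, your rank count is wrong. $pr\circ\Psi$ maps a $2n$-dimensional manifold onto the $(n-d)$-dimensional $G_\bC$, so Theorem~\ref{T:subm} gives rank $n+d$, not $n$. No other method can fix this. For any smooth $\psi$ from a $2n$-dimensional manifold to $G_\bC$, linear algebra gives $\dim_\bC(\psi_*)^{-1}(T^{1,0}G_\bC)_v\geq 2n-\dim_\bC G_\bC=n+d>n$. So $P$ is never a polarization when $\ker\eta_*\neq0$. A concrete case is $G=(\widetilde{SL(2,\bR)}\times S^1)/\langle(z,e^{2\pi i\theta})\rangle$, with $z$ generating the center and $\theta$ irrational; then $G_\bC=PSL(2,\bC)$.

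\textbf{Consequences.} The statement needs the extra hypothesis that $\eta_*$ is injective, and under that hypothesis your direct reduction is already a proof. The paper's own proof breaks at the same two places: making the lift $\chi$ in Lemma~\ref{l:claim} agree with $\iota\circ\widetilde\gamma$, and the identity $(pr_*)^{-1}(T^{1,0}G_\bC)=T^{1,0}H$, both require $B$ to be discrete. For general $G$, a corrected version of your fallback does hold. $\widetilde P$ is invariant under the deck group $\ker\sigma$ and descends to an ac-polarization on the open set $\sigma_*N_i(\iota,H)$. That bundle, however, is not $(\psi_\eta)^{-1}_*(T^{1,0}G_\bC)$.
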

\begin{proof}
    Let $\widetilde{G}$ be the universal cover of $G$ 
    and $\mathfrak g$, $\widetilde{\mathfrak g}$,  $\sigma$,
    $\iota$, $H$, $pr$ be as in Subsection~\ref{Ss:univcompl}.
    Define the 
    left-invariant Koszul connection $\widetilde \nabla$ on $\widetilde G$ as in Proposition~\ref{P:univcconn}. 
    \begin{lemma}\label{l:claim} 
  $\sigma_*N_i(\iota,H)=N_i(\eta,G_{\bC})$ and the following diagram commutes.
    \begin{equation}\label{di:diagram}
        \begin{tikzcd}
	{N_i(\iota,H)} & {H} \\
	{N_i(\eta,G_{\mathbb C})} & {G_{\mathbb C}}
	\arrow["{\psi_\iota}", from=1-1, to=1-2]
	\arrow["{\sigma_*}", from=1-1, to=2-1]
	\arrow["pr", from=1-2, to=2-2]
	\arrow["{\psi_\eta}", from=2-1, to=2-2]
    \end{tikzcd}
    \end{equation} 
    \end{lemma}
   \begin{proof}[Proof of Lemma~\ref{l:claim}]
  Let $\widetilde v\in T\widetilde G$   and  $\widetilde \gamma_{\widetilde v}$ the $\widetilde \nabla$-geodesic with ${\widetilde \gamma'_{\widetilde v}}(0)=\widetilde v$.  
   It follows  from  Proposition~\ref{P:univcconn} that $\gamma_v:=\sigma\circ\widetilde\gamma_{\widetilde v}$ is the $\nabla$-geodesic with 
   $\gamma_v'(0)=v:=\sigma_*\widetilde v$. 
Then from \eqref{E:etadcomm}
    \begin{equation}\label{E:eta}    \eta\circ\gamma_v=\eta\circ\sigma\circ\widetilde\gamma_{\widetilde v}=pr\circ\iota\circ \widetilde\gamma_{\widetilde v}.
    \end{equation}
If $\widetilde v\in N_i(\iota,H)$,
  the map $\iota\circ\widetilde{\gamma_{\widetilde v}}$ extends holomorphically to a neighborhood of $[0,i]$, denote this extension by $(\iota\circ\widetilde{\gamma_{\widetilde v}})^{\bC}$.       
Since $pr:H\ra G_\bC$ is holomorphic,  $(\eta\circ\gamma_v)^\bC:=pr\circ(\iota\circ\widetilde \gamma_{\widetilde v})^\bC$ will be holomorphic  in the same neighborhood. 
In light of \eqref{E:eta}, this map is a holomorphic extension of $\eta\circ\gamma_v$.
 This proves that $v\in N_i(\eta,G_\bC)$, i.e. $\sigma_*(N_i(\iota,H))\subset N_i(\eta,G_{\bC}) $.  We also get that
 $$
 pr\circ \psi_{\iota}(\widetilde v)=pr\circ(\iota\circ\widetilde \gamma_{\widetilde v})^{\bC}(i)=
            (\eta\circ\gamma_v)^{\bC}(i)
            =\psi_{\eta}(v).
$$
 Now suppose that $v\in N_i(\eta,G_{\bC})\cap T_aG$ and $\gamma_v$ is the $\nabla$-geodesic with $\gamma_v'(0)=v$. Then $\eta\circ\gamma_v$ extends holomorphically to a neighborhood of $[0,i]$, denote this extension by $(\eta\circ\gamma_v)^{\bC}$. Let $b\in\sigma^{-1}(a)$. From Proposition~\ref{P:univcconn} there exists a unique  $\widetilde \nabla$-geodesic $\widetilde \gamma$, such that $\widetilde\gamma(0)=b$ and $\sigma\circ\widetilde\gamma=\gamma_v$.
Let $\widetilde v=\widetilde\gamma'(0)$. Then 
$\sigma_*\widetilde v=v$. 
    We need to show that $\widetilde v\in N_i(\iota,H)$.
Since $pr:H\rightarrow G_\bC$ is a holomorphic fiberbundle, there exists a holomorphic map $\chi$ from some neighborhood of $[0,i]$ to $H$ with $pr\circ\chi=(\eta\circ\gamma_v)^{\bC}$. Because of \eqref{E:eta}, we can choose $\chi$ so that  for $t$ real and $|t|$ small enough, $\chi(t)=\iota\circ\widetilde\gamma$, 
therefore $\widetilde v\in N_i(\iota,H)$. Hence $N_i(\eta,G_\bC)\subset\sigma_*(N_i(\iota,H))$ finishing the proof of the claim.
\end{proof}  
 Back to the proof of the Theorem:
 since $\widetilde G$ is simply connected, its universal complexification will be $\iota:\widetilde G\ra H$ from Subsection~\ref{Ss:univcompl}. Then 
from Subsection~\ref{Ss:groupcompl} we know that $(H,\mathfrak D,\iota)$ is a complexification of $(\widetilde{G},\widetilde{\nabla})$, 
hence from Theorem~\ref{T:polarization} 
we get that $N_i(\iota,H)\subset T\widetilde G$ is open, the polar map $\psi_\iota:N_i(\iota,H)\ra H$ is real-analytic and $\widetilde P:=
 (\psi_\iota)^{-1}_*(T^{1,0}H)\subset \bC T(N_i(\iota,H))$ is an ac-polarization.
 Since $\sigma$ is a covering map, $\sigma_*:T\widetilde G\ra TG$ is a covering as well, so  the claim  implies 
 $N_i(\eta,G_{\bC})$ is open in $TG$, and $\psi_\eta$
is real-analytic.
    Using the commutativity of the diagram \eqref{di:diagram}, the holomorphicity of $pr$ and the definition of $P$ and $\widetilde P$ we get
    \begin{equation}\label{eq:sigma-1 P}
        \begin{split}
            (\sigma_*)^{-1}_*P&=(\sigma_*)^{-1}_*(\psi_{\eta})_*^{-1}(T^{1,0}G_{\bC})=(\psi_{\eta}\circ\sigma_*)_*^{-1}(T^{1,0}G_{\bC}) \\
            &=(\psi_\iota)_*^{-1}(pr_*)^{-1}T^{1,0}G_{\bC}=(\psi_\iota)_*^{-1}(T^{1,0}H)=\widetilde P.
        \end{split}
    \end{equation}
     Since $\sigma_*$ is a covering map and $\widetilde P$ is a smooth involutive bundle, \eqref{eq:sigma-1 P} yields that $P$ is a genuine smooth involutive bundle of the right rank, i.e. $P$ is a polarization.

    Next we show that $P$ is in fact an ac-polarization.
    Let $v\in TG$ and choose  $\widetilde v\in T\widetilde G$ with $\sigma_*\widetilde v=v$. 
    Recall $\Omega_{\widetilde v}$ from Definition \eqref{F:Omegav}. 
    Since $\widetilde P$ is an ac-polarization,  $\psi_\iota\circ\Omega_{\widetilde v}$ is holomorphic. 
   Using the commutativity of the diagram \eqref{di:diagram}, the holomorphicity of $pr$ and $\psi_\iota\circ\Omega_{\widetilde v}$ ($\widetilde P$ is an ac-polarization) and the fact that
   $\sigma_*\circ\Omega_{\widetilde v}=\Omega_v$ (c.f. \eqref{E:omegalift})
   we get
     $\psi_\eta\circ \Omega_v=\psi_\eta\circ\sigma_*\circ\Omega_{\widetilde v}=pr\circ\psi_\iota\circ\Omega_{\widetilde v}$ is holomorphic. 
     This shows, $P$ is an ac-polarization.

    The only thing left to prove is the 
    $G$-invariance of $P$. From the definitions one sees easily, that $N_i(\eta,G_{\bC})$ is $G$-invariant w.r. to the natural action of $G$ on $TG$. Furthermore, $G$ also acts on $G_{\bC}$ via the map $\eta$ by biholomorphisms and $T^{1,0}G_{\bC}$ is $G$-invariant as well. First we  prove that $\psi_\eta$ is $G$ equivariant!
    Let $v\in N_i(\eta,G_{\bC})$. Then $\eta\circ\gamma_v$ extends holomorphically to a neighborhood of $[0,i]$ by $\psi_\eta(v)=(\eta\circ\gamma_v)^{\bC}(i)$. Let $g\in G$ and $(L_g)_*v=w$. Now $g\cdot \gamma_v$ is a $\nabla$-geodesic with initial velocity $w$. Using that $\eta$ is a homomorphism $\eta(g\cdot \gamma_v)=\eta(g)\eta\circ\gamma_v$. This extends holomorphically to a neighborhood $[0,i]$ therefore 
    \begin{equation}\label{eq:eta homom}
        \psi_\eta((L_g)_*v)=\eta(g)\psi_\eta(v).
    \end{equation}
    Let $u\in P$, then using the definition of $P$, the holomorphicity of $L_{\eta(g)}$ and equation \eqref{eq:eta homom} we get
    \begin{equation}
        \begin{split}
            u\in P&\iff (\psi_\eta)_*u\in T^{1,0}G_\bC \\
            &\iff(L_{\eta(g)})_*(\psi_\eta)_*u\in T^{1,0}G_\bC \\
            &\iff (\psi_\eta\circ(L_g)_*)_*u\in T^{1,0}G_\bC\\
            &\iff ((L_g)_*)_*u\in P.   
        \end{split}
    \end{equation}
    This shows that $P$ is indeed $G$-equivariant.
\end{proof}
\subsection{\texorpdfstring{$\mathcal P$}P-complete left invariant
 connections}\label{Ss:pcompleteleftinv}
 \begin{definition}\label{d:iRcompl}
 A smooth real vector field $\mathcal W$ on $\bR^n$ is called   {\it $\bC$-complete} (resp. 
      {\it $i\bR$-complete}) if every  trajectory extends  to
a holomorphic map  $\bC\ra\bC^n$ (resp.  
$U\ra\bC^n$, where $U$ is
some open neighborhood of $i\bR$, that can depend on the trajectory).
 \end{definition}
 When $\mathcal W$ is homogeneous, $i\bR$-completeness is equivalent to: every $\mathcal W$-trajectory extends holomorphically to some open neighborhood of $[0,i]$. The neighborhood can depend on the trajectory. 
 See more on these type of vector fields in Section~\ref{S:vfields}.
\begin{theorem}\label{T:nabla-i-complete} Let $G$ be a connected, real Lie group with Lie algebra $\mathfrak g$, $\nabla$ a left invariant Koszul connection  on $TG$ and $\eta:G\ra G_\bC$ the universal complexification. 
Suppose the Euler-Arnold vector field  $\mathcal W$    is $i\bR$-complete.
Then $N_i(\eta,G_\bC)=TG$ and with the polar map $\psi_\eta$, associated to $\eta$,
$P:=(\psi_\eta)^{-1}_*(T^{1,0}G_\bC)$ is a $G$-invariant ac-polarization on $TG$.
In particular
$\nabla$ is $\mathcal P$-complete.
\end{theorem}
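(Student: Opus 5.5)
Since Theorem~\ref{T:eta-polarization} already shows that $P:=(\psi_\eta)^{-1}_*(T^{1,0}G_\bC)$ is a $G$-invariant ac-polarization on $N_i(\eta,G_\bC)$, the only thing left is the equality $N_i(\eta,G_\bC)=TG$. We do not need to check this on all of $TG$. By Lemma~\ref{l:claim}, $\sigma_*N_i(\iota,H)=N_i(\eta,G_\bC)$, and $\sigma_*:T\widetilde G\to TG$ is onto. So it suffices to show $N_i(\iota,H)=T\widetilde G$, where $\widetilde G$ carries the lifted connection $\widetilde\nabla$ of Proposition~\ref{P:univcconn}. Its bilinear map $\widetilde\alpha$ is conjugate to $\alpha$ by the linear isomorphism $\varphi$. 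Hence the Euler--Arnold field of $\widetilde\nabla$ is $\varphi^*\mathcal W$, which is again $i\bR$-complete.

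Fix $\widetilde v\in T_b\widetilde G$ and put $x_0:=\omega_{\widetilde G}(\widetilde v)$. By Proposition~\ref{P:leftinvconn}, $x(t):=\omega_{\widetilde G}(\widetilde\gamma_{\widetilde v}'(t))$ is the $\widetilde{\mathcal W}$-trajectory through $x_0$. By $i\bR$-completeness (in the homogeneous form noted after Definition~\ref{d:iRcompl}), $x$ extends holomorphically to a neighborhood $U$ of $[0,i]$, which we may take simply connected (a thin convex neighborhood of the segment containing a small real interval around $0$). Its values lie in $\widetilde{\mathfrak g}\otimes\bC\cong\mathfrak h$. We transport these values into $\mathfrak h^{1,0}$ via $\xi$, taking the complex-linear extension of $\iota_*$ so that real values go to $(\iota_*x(t))^{1,0}$. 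This gives $u\in\mathcal O(U,\mathfrak h^{1,0})$.

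By Proposition~\ref{P:CMC}(b) there is a holomorphic $\Gamma:(U,0)\to(H,\iota(b))$ with $\omega_H(\Gamma')\equiv u$. The complex bilinear extension of $\widetilde\alpha$ defines $\mathfrak D$ (Subsection~\ref{Ss:groupcompl}), so $u$ satisfies $u'+\alpha^{1,0}(u,u)=0$. This holds on the real interval by construction, and hence on all of $U$ by the identity theorem. Proposition~\ref{P:CEA}(a) then says $\Gamma$ is a holomorphic $\mathfrak D$-geodesic. On the real part of $U$ we have $\omega_H((\iota\circ\widetilde\gamma_{\widetilde v})')=(\iota_*x)^{1,0}=u$, because $\iota$ is a homomorphism and therefore intertwines the Maurer--Cartan forms. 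Both curves start at $\iota(b)$, so Proposition~\ref{P:CMC}(a) gives $\Gamma=\iota\circ\widetilde\gamma_{\widetilde v}$ there. Thus $\iota\circ\widetilde\gamma_{\widetilde v}$ extends holomorphically to $U\supset[0,i]$, i.e.\ $\widetilde v\in N_i(\iota,H)$. By Lemma~\ref{l:claim} this gives $N_i(\eta,G_\bC)=TG$, and Theorem~\ref{T:eta-polarization} then yields the $G$-invariant ac-polarization on $TG$, so $\nabla$ is $\mathcal P$-complete.

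The main obstacle is the bookkeeping in the identification of real and holomorphic Maurer--Cartan data. One has to check that the real curve $x(t)$, pushed into $\mathfrak h^{1,0}$ via $\xi\circ\iota_*$, really is the restriction of a holomorphic trajectory of $\mathcal E$, and that $\omega_H((\iota\circ\widetilde\gamma)'^{1,0})=\xi(\iota_*x)$. It is also important to choose $U$ simply connected, so that Proposition~\ref{P:CMC}(b) applies globally on $U$ and no monodromy problem arises.
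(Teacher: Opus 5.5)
Your proof is correct and follows the paper's argument. You reduce via Theorem~\ref{T:eta-polarization} and Lemma~\ref{l:claim} to $N_i(\iota,H)=T\widetilde G$, extend the Euler--Arnold trajectory holomorphically to a neighborhood of $[0,i]$, and integrate it to a holomorphic map into $H$ that extends $\iota\circ\widetilde\gamma_{\widetilde v}$. The differences are only in presentation: you treat an arbitrary base point $b$ through the initial condition in Proposition~\ref{P:CMC}(b) instead of reducing to $\widetilde{\mathfrak g}$ by left invariance, and you unpack Corollary~\ref{C:Ccomplete}(a) into Propositions~\ref{P:CMC} and~\ref{P:CEA}. This makes explicit two points the paper uses tacitly: the simply connected neighborhood, and the $\varphi$-conjugacy of the Euler--Arnold fields of $\nabla$ and $\widetilde\nabla$.
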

\begin{proof} It is enough  to show that 
$N_i(\eta,G_\bC)=TG$, the rest follows from 
Theorem \ref{T:eta-polarization}. 
Let $\sigma:\widetilde G\ra G$ be the universal 
cover, $\iota:\widetilde G\ra H$ the universal complexification as in Subsection~\ref{Ss:univcompl}, $\widetilde \nabla$ the left invariant Koszul connection on $T\widetilde G$ that is the lift of $\nabla$, as in Subsection~\ref{Ss:liftconn} and $\mathfrak D$ the left invariant holomorphic Koszul connection on
$T^{1,0}H$ and underlying real connection $\widehat\nabla$ on $TH$ as in Subsection~\ref{Ss:groupcompl}.
So $(H,\mathfrak D,\iota)$ is a complexification of $(\widetilde G,\widetilde \nabla)$.
Lemma~\ref{l:claim} shows that it suffices to prove $N_i(\iota,H)=T\widetilde G$.
The left $\widetilde G$-invariance of $\widetilde \nabla$ implies  $N_i(\iota,H)$ is left $\widetilde G$-invariant as well. So  it is enough to prove  $\widetilde{\mathfrak g}\subset N_i(\iota,H)$.
Let $\widetilde v\in \widetilde{\mathfrak g}$ and
   $\widetilde \gamma_{\widetilde v}$ be the $\widetilde{\nabla}$-geodesic with $\dot{\widetilde\gamma}_{\widetilde v}(0)=\widetilde v$. Then $\chi_v:=\iota\circ\gamma_v$ will be a $\widehat\nabla$-geodesic and so it is a restriction of a  holomorphic  $\mathfrak D$-geodesic $\Gamma_{\iota_*{\widetilde v}}$.
   Since $\mathcal W$ is $i\bR$-complete, the integral curve $c$, $c(0)=\widetilde v$ of $\mathcal W$ extends holomorphically (denoted by $c_\bC$) to a neighborhood of $[0,i]$.
   By the identity theorem, $c_\bC$ will be a holomorphic trajectory of the holomorphic Euler-Arnold vector field $\mathcal E$ of $\mathfrak D$ with initial condition $c_\bC(0)=\widetilde v^{1,0}$. Then
  Corollary~\ref{C:Ccomplete}(a) guarantees that the holomorphic $\mathfrak D$-geodesic 
  $\Gamma_{\iota_*{\widetilde v}}$ is defined on the same neighborhood, i.e. $\widetilde{\mathfrak g}\subset N_i(\iota,H)$
\end{proof}
A left invariant Koszul connection
on a disconnected Lie group $G$,  whose Euler-Arnold vector field is $i\bR$-complete will be also $\mathcal P$-complete. This is so, since every component of $G$ will be  connection preserving diffeomorphic to the unit component $G_0$  via left translations and we can apply 
Theorem~\ref{T:nabla-i-complete} to $\nabla$ restricted to $TG_0$.
Also from Theorem~\ref{T:nabla-i-complete} we obtain:
\begin{cor}\label{c:ircomplete} 
   Let $G$ be a connected, real Lie group with Lie algebra $\mathfrak g$ and $\beta:\mathfrak g\times\mathfrak g\ra\mathfrak g$ a symmetric bilinear map so that the vector field  $-\beta(x,x)$ is $i\bR$-complete. Then the left invariant connection
   $$
   \nabla_xy:=\frac1{2}[x,y]+\beta(x,y),\quad x,y\in\mathfrak g
   $$
   is $\mathcal P$-complete.
\end{cor}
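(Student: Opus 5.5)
This is a direct application of Theorem~\ref{T:nabla-i-complete}. The plan is to identify the Euler-Arnold vector field of the given connection, check that it is the $i\bR$-complete field $-\beta(x,x)$, and then invoke that theorem.

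First, I would read off the bilinear map. The connection $\nabla_xy=\frac12[x,y]+\beta(x,y)$ corresponds, in the sense of Subsection~\ref{Ss:canonical}, to
\[
\alpha(x,y)=\tfrac12[x,y]+\beta(x,y).
\]
Its Euler-Arnold vector field is therefore
\[
\mathcal W(x)=-\alpha(x,x)=-\tfrac12[x,x]-\beta(x,x)=-\beta(x,x),
\]
because the bracket term vanishes on the diagonal by antisymmetry. So $\mathcal W$ is exactly the vector field $x\mapsto-\beta(x,x)$ on $\mathfrak g\cong\bR^n$, which by hypothesis is $i\bR$-complete in the sense of Definition~\ref{d:iRcompl}.

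Next, I would apply Theorem~\ref{T:nabla-i-complete} to the connected group $G$ with this left invariant connection $\nabla$. That theorem gives $N_i(\eta,G_\bC)=TG$, and $P=(\psi_\eta)^{-1}_*(T^{1,0}G_\bC)$ is a $G$-invariant ac-polarization on all of $TG$. By definition, this means $\nabla$ is $\mathcal P$-complete.

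There is no real obstacle here. The only thing to get right is the identification $\mathcal W=-\beta(x,x)$, which rests on the symmetric/antisymmetric decomposition and the vanishing of $[x,x]$. It is also worth noting that torsion-freeness plays no role: Theorem~\ref{T:nabla-i-complete} applies to any left invariant connection.
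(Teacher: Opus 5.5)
Your proposal is correct and matches the paper, which obtains this corollary directly from Theorem~\ref{T:nabla-i-complete}. As you do, it rests on the observation that the Euler-Arnold field is $-\alpha(x,x)=-\beta(x,x)$, since $[x,x]=0$.
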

The following theorem was proved in \cite{Sz05}, Theorem 0.2, (1) and (2). 
\begin{theorem}\label{T:locsymm}
Let $(M,\nabla)$ be an affine  locally symmetric Koszul manifold. Then $\nabla$ is $\mathcal P$-complete.
\end{theorem}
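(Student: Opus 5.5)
The plan is to construct the candidate polarization on all of $TM$ directly from Jacobi fields, and then propagate its properties from a neighborhood of the zero section by real-analyticity. Theorem~\ref{T:icompl} is of no direct use here, since a general locally symmetric $(M,\nabla)$ need not come with an explicit $i\bR$-complete complexification.

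First, by the remark in Subsection~\ref{Ss:canonical}, we may assume $\nabla$ is torsion free; local symmetry then means $\nabla R=0$. For $v\in T_pM$ we use the splitting $T_v(TM)=H_v\oplus V_v\cong T_pM\oplus T_pM$ into horizontal and vertical parts. Let $E(t)$ be a parallel frame along $\gamma_v$. Since $\nabla R=0$, the Jacobi equation $Y''+R(Y,\dot\gamma_v)\dot\gamma_v=0$ has, in this frame, the constant coefficient matrix $R_v$, which is polynomial (quadratic) in $v$. Its solutions are therefore
$$
Y(t)=C_v(t)a+S_v(t)b,
$$
where $C_v(t)=\sum_k(-1)^k t^{2k}R_v^k/(2k)!$ and $S_v(t)=\sum_k(-1)^k t^{2k+1}R_v^k/(2k+1)!$. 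Both series are entire in $(t,v)$, so every Jacobi field along every geodesic extends holomorphically to all of $\bC$ in $t$, with no restriction on $v$.

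Next, we recall the Lempert--Szőke description of adapted structures (cf.\ \cite{LSz91}, \cite{Sz91}). Near the zero section the ac-structure exists. At $v$, its $(1,0)$ space, written in the splitting $H_v\oplus V_v$ (with the appropriate $s$-scaling along $\Omega_v$), consists of the initial data $(Y(0),Y'(0))$ of those complex Jacobi fields whose holomorphic extension vanishes at a fixed point, which is $t=-i$ up to the normalization conventions. Motivated by this, we define for every $v\in TM$
$$
P_v:=\{(a,b)\in\bC T_pM\oplus\bC T_pM:\ C_v(-i)a+S_v(-i)b=0\}.
$$
The key observation is that the $n\times 2n$ matrix $[\,C_v(-i)\ \ S_v(-i)\,]$ always has rank $n$. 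This is the Wronskian argument: $C_v$ and $S_v$ form a fundamental system, so $\begin{pmatrix}C&S\\ C'&S'\end{pmatrix}$ is invertible at every complex time. Hence $P_v$ has constant dimension $n$ and depends real-analytically on $v$, so $P$ is a genuine real-analytic subbundle of $\bC T(TM)$ of rank $n=\frac12\dim TM$. This step is exactly where local symmetry is used. For a general connection the Jacobi fields extend only on a $v$-dependent domain, and the analogous $P$ would fail to exist or to have constant rank.

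It remains to check involutivity and adaptedness. On a small fiberwise $*$-neighbourhood $N_0$ of the zero section the ac-structure exists (\cite{Sz04}), and there $P=T^{1,0}N_0$ by the Jacobi field description. In particular, on $N_0$ the bracket of any two real-analytic local sections of $P$ lies in $P$, and $(\Omega_v)_*(T^{1,0})\subset P$ on $\Omega_v^{-1}(N_0)$. Both conditions can be expressed as the vanishing of real-analytic tensors built from $P$: for involutivity, the $\bC T/P$-valued bracket form; for adaptedness, the composition of $(\Omega_v)_*\partial_{\bar z}$ with the projection to $\bC T/P$, as a function of $(v,z)$. These tensors are real-analytic on all of $TM$, respectively on $TM\times\bC$, and vanish on nonempty open sets meeting every connected component. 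By the identity theorem they vanish everywhere. Hence $P$ is an ac-polarization on $TM$.

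The main obstacle I expect is pinning down precisely the correct identification of $T^{1,0}$ with Jacobi data on $N_0$, including the $s$-scaling along $\Omega_v$ and the sign of the evaluation point $\pm i$, so that $P$ really agrees with $T^{1,0}N_0$ there. I would verify this first on the flat case $R=0$, where $P_v=\{a-ib=0\}$ must reproduce the standard structure $TM\cong M\times\bC^n$, and then invoke \cite{Sz91} for the general identification.
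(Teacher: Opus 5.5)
Your proof is correct up to small slips, and it takes a different route from anything in the paper. The paper gives no argument for Theorem~\ref{T:locsymm}; it imports the result from earlier work. The natural comparison is therefore with the paper's own mechanism for $\mathcal P$-completeness (Theorems~\ref{T:icompl} and~\ref{T:nabla-i-complete}).

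There, $P$ is the pull-back $(\psi_\iota)^{-1}_*(T^{1,0}Z)$ under the polar map of a complexification. Involutivity and the bundle property come for free from Theorem~\ref{T:subm}, but one needs a complexification with $N_i(\iota,Z)=TM$. As you say, a general locally symmetric space, possibly incomplete and non-homogeneous, does not obviously have one.

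You instead write $P$ down intrinsically: $P_v=\{(a,b): C_v(-i)a+S_v(-i)b=0\}$, with $C_v(-i)=\sum_k R_v^k/(2k)!$ and $S_v(-i)=-i\sum_k R_v^k/(2k+1)!$. You get constant rank from the invertibility of the fundamental matrix of $Y''+R_vY=0$ at $t=-i$, and you propagate involutivity and adaptedness from a neighbourhood of the zero section by the identity theorem. This needs only local information near $M$, and it isolates exactly where $\nabla R=0$ enters: the entire dependence on $R_v$.

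Three points need tightening.

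\emph{The adaptedness tensor.} The condition is $(\Omega_v)_*\partial_z\in P$, not $(\Omega_v)_*\partial_{\bar z}\in P$. On $N_0$ one has $(\Omega_v)_*\partial_{\bar z}\in T^{0,1}N_0=\overline{P}$, so the tensor as you wrote it does not vanish there.

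\emph{Its domain.} The domain is $\{(v,t+is): t\in I_v\}$, not $TM\times\bC$, since geodesics may be incomplete. This set is open and each of its components meets $\{s=0\}$, which is all the identity theorem needs. Real-analyticity of $M$ and $\nabla$, which you use throughout, is automatic for affine locally symmetric connections.

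\emph{The identification you call the main obstacle.} You need neither \cite{Sz91}, which is Riemannian, nor the ac-structure on $N_0$; the paper's Theorem~\ref{T:polarization} settles it.
\begin{enumerate}
\item Take any local complexification $(Z,\mathfrak D,\iota)$.
\item For a real $\xi\in T_v(TM)$ with horizontal/vertical data $(a,b)$, $((\psi_\iota)_*\xi)^{1,0}$ is the value at $\zeta=i$ of the holomorphic extension of the Jacobi field with initial data $(a,b)$.
\item In the $\mathfrak D$-parallel holomorphic extension of a parallel frame, that extension has components $C_v(\zeta)a+S_v(\zeta)b$.
\item For complex $\xi=\xi_1+i\xi_2$, one has $(\psi_\iota)_*\xi\in T^{1,0}Z$ iff $((\psi_\iota)_*\xi_1)^{1,0}-i((\psi_\iota)_*\xi_2)^{1,0}=0$. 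This reads $C_v(i)\bar a+S_v(i)\bar b=0$.
\item Since $C_v$ and $S_v$ have real Taylor coefficients, this is equivalent to $C_v(-i)a+S_v(-i)b=0$.
\end{enumerate}
Hence your $P$ agrees, on the open set $N_i(\iota,Z)\supset M$, with the ac-polarization given by Theorem~\ref{T:polarization}. This fixes the sign and the normalization, and it supplies the open set from which you continue analytically.
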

Let $G$ be a real Lie group and $\overset{c}\nabla$ its canonical connection.
Recall the  well known, formula of the curvature of $\overset{c}\nabla$ and its covariant derivative 
$$
R(x,y)z=-\frac1{4}[[x,y],z],\qquad x,y,z\in \mathfrak g,
$$
$$
\nabla R(x,y,z,w)=\frac1{8}[[x,y],[w,z]],\qquad w,x,y,z\in\mathfrak q.
$$
From these we obtain.
\begin{prop}\label{P:Cartanlocsymm}
$\overset{c}\nabla$ is flat iff $\mathfrak g$ is Abelian or 2-step nilpotent.
$\overset{c}\nabla$ is locally symmetric iff 
 $\mathfrak g$ is 2-step solvable.   
\end{prop}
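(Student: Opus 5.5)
The plan is to read off both statements directly from the two displayed formulas for $R$ and $\nabla R$. Since $\overset{c}\nabla$ is left invariant, $R$ and $\nabla R$ are left invariant tensors. A left invariant tensor vanishes identically iff it vanishes at $e$, so both questions reduce to purely Lie algebraic identities on $\mathfrak g$.

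\emph{Flatness.} By the formula $R(x,y)z=-\frac14[[x,y],z]$, $\overset{c}\nabla$ is flat iff $[[x,y],z]=0$ for all $x,y,z\in\mathfrak g$, i.e.\ iff $[\mathfrak g,\mathfrak g]$ lies in the center of $\mathfrak g$. This says exactly that the lower central series satisfies $\mathfrak g^{(3)}=[[\mathfrak g,\mathfrak g],\mathfrak g]=0$. In other words, $\mathfrak g$ is nilpotent of step at most $2$, which means $\mathfrak g$ is Abelian or 2-step nilpotent. Both directions are immediate.

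\emph{Local symmetry.} By definition, $\overset{c}\nabla$ is locally symmetric iff $\nabla R\equiv0$. By the second formula, at $e$ this is the condition $[[x,y],[w,z]]=0$ for all $x,y,z,w$. Since the brackets $[x,y]$ span $[\mathfrak g,\mathfrak g]$, the condition is equivalent, by bilinearity, to $[[\mathfrak g,\mathfrak g],[\mathfrak g,\mathfrak g]]=0$. That is, the derived series stops at the second step, so $\mathfrak g$ is (at most) 2-step solvable. Again both directions are immediate.

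The only point needing care is the passage from vanishing at $e$ to vanishing everywhere. For $R$ this follows from left invariance of $\overset{c}\nabla$, and then $\nabla R$ is left invariant as well. The quoted formula for $\nabla R$ uses the left invariant extensions of $x,y,z,w$. Its correctness on left invariant fields is enough, since $\nabla R$ is tensorial and left invariant fields span each tangent space. I expect no real obstacle. At most one might double check the stated formula for $\nabla R$. For left invariant fields, $(\nabla_w R)(x,y)z$ is computed using $\nabla_w u=\frac12[w,u]$ together with the Jacobi identity, which gives a multiple of $[[x,y],[w,z]]$; the constant is irrelevant for the vanishing criterion.
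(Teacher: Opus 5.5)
Your flatness argument is correct and follows the paper's route. By left invariance everything reduces to $e$, and $R(x,y)z=-\frac14[[x,y],z]$ vanishes identically iff $[[\mathfrak g,\mathfrak g],\mathfrak g]=0$, i.e.\ iff $\mathfrak g$ is Abelian or $2$-step nilpotent.

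The local-symmetry half fails exactly at the step you called harmless: the ``multiple of $[[x,y],[w,z]]$'' is the zero multiple. For left invariant fields, with $\nabla_wu=\frac12[w,u]$,
\begin{align*}
(\nabla_wR)(x,y)z&=\nabla_w\bigl(R(x,y)z\bigr)-R(\nabla_wx,y)z-R(x,\nabla_wy)z-R(x,y)\nabla_wz\\
&=\tfrac18\Bigl(-[w,[[x,y],z]]+[[[w,x],y],z]+[[x,[w,y]],z]+[[x,y],[w,z]]\Bigr)=0 .
\end{align*}
This vanishes because $ad_w$ is a derivation, so $[w,[[x,y],z]]=[[[w,x],y],z]+[[x,[w,y]],z]+[[x,y],[w,z]]$. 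The displayed expression $\frac18[[x,y],[w,z]]$ is only the last term $-R(x,y)\nabla_wz$, and the other three terms cancel it. Hence $\nabla R\equiv0$ on every Lie group. The condition $[[\mathfrak g,\mathfrak g],[\mathfrak g,\mathfrak g]]=0$ that you read off is therefore not the vanishing criterion.

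Consequently the ``only if'' direction of the second assertion is false, and no argument can establish it. The paper's own proof reads the condition off the same incorrect formula, so it has the same defect. A concrete counterexample is $G=SU(2)$. There $\overset{c}\nabla$ is the Levi-Civita connection of the biinvariant metric, i.e.\ of the round $S^3$, which is locally symmetric. Yet $\mathfrak{su}(2)$ is simple, hence not $2$-step solvable.

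Conceptually, the $\overset{c}\nabla$-geodesics are $t\mapsto a\exp(tx)$. Inversion $g\mapsto g^{-1}$ sends them to $t\mapsto a^{-1}\exp(-t\,Ad_ax)$, which are again affinely parametrized geodesics. So inversion is an affine transformation of the torsion free connection $\overset{c}\nabla$, with differential $-\mathrm{id}$ at $e$. This forces $(\nabla R)_e=-(\nabla R)_e=0$.

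What is actually true is that $\overset{c}\nabla$ is locally symmetric for every $\mathfrak g$. So the ``if'' direction of the second sentence holds trivially, and the ``only if'' direction should be dropped.
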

Combining this Proposition with Theorem~\ref{T:locsymm} we can conclude that for a 2-step solvable Lie group $\overset{c}\nabla$ is $\mathcal P$-complete. In fact this latter property is always true 
\begin{cor}\label{C:cc}
Let $G$ be an arbitrary real Lie group. 
  Then $\overset{c}{\nabla}$ is $\mathcal P$-complete.
\end{cor}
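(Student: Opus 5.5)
The plan is to apply Theorem~\ref{T:nabla-i-complete}. It suffices to show that the Euler-Arnold vector field of $\overset{c}\nabla$ is $i\bR$-complete.

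For the canonical connection the bilinear map is $\alpha(x,y)=\frac1{2}[x,y]$. Hence $\alpha(x,x)=0$ for every $x\in\mathfrak g$, so the Euler-Arnold vector field $\mathcal W(x)=-\alpha(x,x)$ vanishes identically. Its trajectories are the constant curves $c(t)=x_0$. These obviously extend holomorphically to all of $\bC$ as constant maps, so $\mathcal W$ is $\bC$-complete and in particular $i\bR$-complete.

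First assume $G$ is connected. Theorem~\ref{T:nabla-i-complete} then gives $N_i(\eta,G_\bC)=TG$. It also shows that $P=(\psi_\eta)^{-1}_*(T^{1,0}G_\bC)$ is a $G$-invariant ac-polarization on $TG$, i.e.\ $\overset{c}\nabla$ is $\mathcal P$-complete.

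If $G$ is disconnected, I use the remark following Theorem~\ref{T:nabla-i-complete}. Each component is mapped onto the identity component $G_0$ by a left translation, and this translation preserves the left invariant connection $\overset{c}\nabla$. So the ac-polarization on $TG_0$ transports to every component, and their union is an ac-polarization on $TG$.

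There is essentially no obstacle: the whole difficulty is already absorbed in Theorem~\ref{T:nabla-i-complete}. The only point worth checking is the disconnected case, where I must make sure $\overset{c}\nabla$ is still defined by left invariance on all of $G$. This holds because $\overset{c}\nabla$ is determined by the bracket on $\mathfrak g=T_eG$, so it is invariant under all left translations, including those between components. As a sanity check, the geodesics through $e$ are the one-parameter subgroups $t\mapsto\exp(tx)$. Their images $\eta(\exp tx)=\exp_{G_\bC}(t\,\eta_*x)$ visibly extend to entire curves in $G_\bC$, with polar map $\psi_\eta(x)=\exp_{G_\bC}(i\,\eta_*x)$, consistent with the abstract argument.
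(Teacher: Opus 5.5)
Your proposal is correct and follows the paper's proof: the Euler--Arnold vector field of $\overset{c}\nabla$ vanishes identically, so it is trivially $i\bR$-complete, and Theorem~\ref{T:nabla-i-complete} gives $\mathcal P$-completeness. Your explicit treatment of disconnected $G$ via left translations is exactly the remark the paper places just before this corollary, and your sanity check $\psi_\eta(x)=\exp_{G_\bC}(i\,\eta_*x)$ agrees with the $\delta=0$ case of Theorem~\ref{T:reddef}.
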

\begin{proof}
 The Euler-Arnold vector field of the canonical connection is the identically zero vector field, that is obviously $i\bR$-complete, the statement then follows from Theorem \ref{T:nabla-i-complete}.   
\end{proof}
\begin{remark} 
 Unlike $\mathcal P$-completeness, 
 $\overset{c}\nabla$ is not  always entire. In fact
 it was proved in (\cite{Sz04}, Theorem 0.5), that this holds iff the Lie algebra  of $G$ is of Loeb type (i.e. for every $b\in\mathfrak g$, the only real number in the spectrum of $ad_b:\mathfrak g\ra\mathfrak g$ is zero). In light of  
 Corollary~\ref{C:cc} a real Lie group that is  not of Loeb type (for example  a noncompact semisimple Lie group  cf. \cite{Sz04}, Corollary 7.2) is an example  when  the canonical connection is  $\mathcal P$-complete but its ac-structure is not entire. 
\end{remark}
The following result (using our notations) 
was proved in \cite{ABI18}.
\begin{theorem}[Aslam-Burns-Irvine]\label{T:ABI}
Let $G$ be a compact, connected, real Lie group equipped with an entire Riemannian metric $g$. Then $(G_\bC,\mathfrak D,\eta)$ is a $\bC$-complete complexification and the polar map
$\psi_\eta:TG\ra G_\bC$ is biholomorphic.    
\end{theorem}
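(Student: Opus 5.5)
We have to plan a proof of the Aslam–Burns–Irvine theorem: for compact connected $G$ with an entire (left invariant presumably? The statement says entire Riemannian metric $g$; in ABI it is left invariant) metric, $(G_\bC,\mathfrak D,\eta)$ is $\bC$-complete and $\psi_\eta$ is biholomorphic.

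We take $g$ to be left invariant, as in the setting of \cite{ABI18}, and $\nabla$ its Levi-Civita connection. Since $G$ is compact, $\eta$ is injective and $\eta_*$ is a monomorphism, so $(G_\bC,\mathfrak D,\eta)$ is a complexification of $(G,\nabla)$ by Subsection~\ref{Ss:groupcompl}.

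The first step is to produce a holomorphic map $TG\ra G_\bC$ from the ac-structure. Let $J^{ac}$ be the entire ac-structure on $TG$. Because $g$ is left invariant and the ac-structure is unique on $*$-domains, left translations act on $(TG,J^{ac})$ by biholomorphisms. We use the universal property of complexification for the complex manifold $X=(TG,J^{ac})$. Concretely, $G$ acts holomorphically on $X$ by left translations, and $X$ is (by \cite{Sz91}) a Stein manifold, since $E$ is a strictly plurisubharmonic exhaustion, $G$ being compact. For a compact group acting holomorphically on a Stein manifold, the action extends to a holomorphic $G_\bC$-action; this is Hochschild / Heinzner's complexification theorem. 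Define $\psi:X\ra G_\bC$ as the holomorphic extension of the map $g\mapsto\eta(g)$ from the totally real orbit $G=0$-section. The cleaner route is to take the $G_\bC$-orbit map of the zero section: the map $G_\bC\ra X$, $h\mapsto h\cdot e$ restricts to the identity on $G$. Proposition~\ref{P:ac-hol-map}, applied to $\psi$ with $f=\eta$, then gives $TG=N\subset N_i(\eta,G_\bC)$ and $\psi=\psi_\eta$. Hence $N_i(\eta,G_\bC)=TG$.

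The second step is to upgrade $i\bR$-completeness to $\bC$-completeness using real-time invariance. For $v\in TG$, $\Omega_v$ is holomorphic on all of $\bC$ (the Riemann foliation leaves are entire because $g$ is complete and entire). Therefore $\psi_\eta\circ\Omega_v$ is an entire holomorphic extension of $\eta\circ\gamma_v$, by Proposition~\ref{P:adap}(a) with $N=TG$. Thus $\chi_v$ extends to $\bC$ for all $v$, which is the $\bC$-completeness of the complexification. For this to match Definition of $\bC$-completeness of $\mathfrak D$, we use left invariance and Corollary~\ref{C:Ccomplete}: the holomorphic Euler–Arnold trajectories starting on the real form $\mathfrak g^{1,0}$ are entire. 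Trajectories with complex initial data are obtained by the complex-time reparametrization $\Gamma_w(\zeta)=\Gamma_v(\tau\zeta+\sigma)$ where $w=\tau\dot\gamma_v(\sigma)$-type points cover $G_i$. This is the same argument as in Proposition~\ref{P:adap}(a).

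The third step, which we expect to be the main obstacle, is bijectivity of $\psi_\eta$. $\psi_\eta$ is $G\times$-equivariant by \eqref{eq:eta homom} and holomorphic between manifolds of equal dimension. Local biholomorphicity everywhere is the delicate point. Our plan is to show that the Jacobian never vanishes by using that $\rho=2E$ is strictly psh and $\sqrt{E}$ solves HCMA. Along each leaf $\Omega_v(\bC)$, $\psi_\eta$ restricts to $\eta$-image of a complexified geodesic. A degeneracy would give a nontrivial $(1,0)$-vector annihilated by $\psi_*$, and the resulting holomorphic Jacobi field would vanish at a real point, which is impossible by positivity of curvature considerations of \cite{LSz91}. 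Given local biholomorphicity, properness follows from the equivariance and the Cartan decomposition $G_\bC=G\exp(i\mathfrak g)$: $\psi_\eta$ maps fibers $T_eG$ into $\exp(i\mathfrak g)$-orbits of $e$ with norm-controlled growth. Hence $\psi_\eta$ is a proper local biholomorphism, i.e. a covering. Both spaces retract to $G$, and $\psi_\eta$ is the identity on the zero section, so $\psi_\eta$ induces an isomorphism on $\pi_1$ and is a biholomorphism.
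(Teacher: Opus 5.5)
The paper does not prove this statement; it only quotes it from \cite{ABI18}. Judged on its own, your proposal has a genuine gap at the decisive step.

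\textbf{The first step.} Here you rely on the principle that a holomorphic action of a compact group on a Stein manifold extends to a holomorphic action of its complexification. That is false. Take $S^1$ rotating the unit disc $\{|z|<1\}$: an extension would make $\lambda\mapsto\lambda\cdot z$, for $z\neq 0$, a nonconstant bounded holomorphic function on $\bC^*$. The disc even carries the $S^1$-invariant strictly plurisubharmonic exhaustion $-\log(1-|z|^2)$, so having $E$ does not help.

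Heinzner's theorem only gives a $G$-equivariant open embedding of $X$ into a possibly larger Stein $G_\bC$-space $X^\bC$. Whether $G_\bC$ acts on $X=(TG,J^{ac})$ itself amounts to completeness of the holomorphic vector fields $J\xi^\#$, $\xi\in\mathfrak g$, induced by the left action. Equivalently, the imaginary-time flows $\exp(is\xi)$ must exist for all $s\in\bR$. This is exactly where the hypothesis that the ac-structure is entire has to be used, and your argument never uses it.

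Without this you have no global holomorphic map $TG\to G_\bC$: the holomorphic extension of $\eta$ off the zero section exists only near $G$. So Proposition~\ref{P:ac-hol-map} cannot be invoked, and $N_i(\eta,G_\bC)=TG$ is not established. Moreover, even granting a global action, the orbit map $h\mapsto h\cdot 0_e$ goes from $G_\bC$ to $TG$, and you would still have to show it is bijective. It is a local biholomorphism by equivariance and total reality of the zero section. Surjectivity and triviality of the stabilizer still need an argument, for instance closedness of the orbit and a $\pi_1$ comparison. Once that is done, $\psi_\eta$ is its inverse by Proposition~\ref{P:ac-hol-map}, and your third step becomes superfluous.

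\textbf{The second and third steps.} The first half of the second step is correct once the first step holds: $\psi_\eta\circ\Omega_v$ is then entire, which is exactly $\bC$-completeness in the sense of Definition~\ref{D:icompl-complex}. The added claim that the vectors $\tau\dot\Gamma_v(\sigma)$ coming from real initial data exhaust $G_i$ is false in general and should be dropped. After left trivialization they fill only a subset of $\mathfrak g_\bC$ of real dimension at most $\dim\mathfrak g+2$.

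In the third step, a kernel vector of $(\psi_\eta)_*$ corresponds to a Jacobi field whose holomorphic extension, pushed into $G_\bC$, vanishes at $\zeta=i$, not at a real point. In any case nonnegative curvature does not prevent zeros of Jacobi fields: the bi-invariant metric on $SU(2)\cong S^3$ is entire and has conjugate points. The properness claim via ``norm-controlled growth'' is asserted rather than proved. As written, neither the existence of a global holomorphic map $TG\to G_\bC$ nor its bijectivity is established.
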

For a non-positive definite metric, it is not clear whether being entire is a stronger condition than $\bC$-completeness and if it is, whether $\psi_\eta$ is biholomorphic or not. Nevertheless it follows from both properties  that the metric is $\mathcal P$-complete.
\subsection{The group SU(2)}  
    Let $g$ be a left-invariant pseudo-Riemannian metric on $SU(2)$ and denote by  $\langle.,.\rangle$ the biinvariant Riemannian metric with constant sectional curvature $1$ (it is constant times the Killing form). Let  $A:su(2)\ra su(2)$ be the  $\langle.,.\rangle$-self adjoint linear transformation  such that $g(x,y)=\langle Ax,y\rangle$, $x,y\in su(2)$.
    Choose an orthonormal basis $\xi_1,\xi_2,\xi_3\in su(2)$
     consisting of eigenvectors of $A$ with $[\xi_i,\xi_{i+1}]=\xi_{i+2}$ mod $3$. Let $A\xi_j=\lambda_j\xi_j$. By renaming the indexes if necessary, we can assume that $\lambda_1\le\lambda_2\leq\lambda_3$. We say that $g$ is generic if $\lambda_1<\lambda_2<\lambda_3$ and call $g$ special if one of the inequality is an equality.
\begin{theorem}\label{T:SU(2)}
\
 \begin{enumerate}
     \item[a)]  If $g$ is generic,
     then it is not $\bC$-complete.
 \item[b)]    If $g$ is special, then it is $\mathcal P$-complete.
 \end{enumerate}   
\end{theorem}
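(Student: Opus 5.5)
The plan is to reduce everything to the Euler--Arnold vector field via Corollary~\ref{C:EAequation}, and then use Theorem~\ref{T:nabla-i-complete} for (b) and Corollary~\ref{C:Ccomplete}(b) for (a).

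\textbf{The Euler--Arnold field.} For $g(x,y)=\langle Ax,y\rangle$, the $\mathrm{ad}$-invariance of $\langle.,.\rangle$ gives
\[
g(ad^*_xy,z)=\langle Ay,[x,z]\rangle=\langle [Ay,x],z\rangle ,
\]
so $ad^*_xy=A^{-1}[Ay,x]$. Hence $\mathcal W(x)=A^{-1}[Ax,x]$. Here all $\lambda_j\neq0$, since $g$ is nondegenerate. Write $x=\sum x_j\xi_j$ and use $[\xi_i,\xi_{i+1}]=\xi_{i+2}$. The geodesic equation then becomes the Euler top system
\[
\lambda_1\dot x_1=(\lambda_2-\lambda_3)x_2x_3,\qquad \lambda_2\dot x_2=(\lambda_3-\lambda_1)x_3x_1,\qquad \lambda_3\dot x_3=(\lambda_1-\lambda_2)x_1x_2 .
\]
Its complex-bilinear extension is the holomorphic Euler--Arnold field $\mathcal E$ of the complexified connection $\mathfrak D$ on $SL(2,\bC)=SU(2)_\bC$.

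\textbf{Part (b).} Suppose, say, $\lambda_1=\lambda_2=\mu$; the case $\lambda_2=\lambda_3$ is identical after cyclically relabelling, with $x_1$ constant. Then $\dot x_3=0$, so $x_3\equiv c$. The remaining equations are linear with constant coefficients:
\[
\dot x_1=\omega x_2,\qquad \dot x_2=-\omega x_1,\qquad \omega=\frac{(\mu-\lambda_3)c}{\mu}.
\]
Their solutions are trigonometric polynomials in $t$, hence entire in $t\in\bC$. Thus $\mathcal W$ is $\bC$-complete, in particular $i\bR$-complete. Theorem~\ref{T:nabla-i-complete} then gives that $g$ is $\mathcal P$-complete, with $N_i(\eta,SL(2,\bC))=TSU(2)$.

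\textbf{Part (a).} By Corollary~\ref{C:Ccomplete} and left invariance, it suffices to find a real initial vector $x(0)\in su(2)$ whose trajectory does not extend to an entire map $\bC\to\bC^3$.

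1. Use the two quadratic first integrals $E=\sum\lambda_jx_j^2$ and $L=\sum\lambda_j^2x_j^2$; both remain constant for the holomorphic continuation by the identity theorem.
2. Because the $\lambda_j$ are pairwise distinct, the linear system in $x_1^2,x_2^2,x_3^2$ expresses $x_1^2$ and $x_2^2$ as affine functions of $y:=x_3^2$. Squaring the third equation gives $\dot y^2=4y\,Q(y)$ with $Q$ quadratic, i.e. a cubic equation in $y$.
3. Choose $x(0)$ generic, e.g. all $x_j(0)\neq0$ with $E,L$ in general position. Then the cubic $y\,Q(y)$ has three simple roots and $y$ is nonconstant. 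Hence $y$ is an affine transform of a Weierstrass $\wp$-function, a nonconstant elliptic function.
4. If the trajectory extended to all of $\bC$, then $y=x_3^2$ would be an entire elliptic function. It would be bounded on a period parallelogram, hence constant by Liouville, a contradiction. So $\mathcal E$, and therefore $\mathfrak D$, is not $\bC$-complete.

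\textbf{Main obstacle.} The delicate point is step 3. One must check that the real initial vector can be chosen so the cubic really has distinct roots and the solution is genuinely doubly periodic. Degenerate choices, such as the separatrices where the solution is a $\mathrm{sech}$-type function, must be avoided. I would handle this by computing the discriminant of $y\,Q(y)$ in terms of $E$, $L$ and the $\lambda_j$, and noting it is a nonzero polynomial in $x(0)$ when the $\lambda_j$ are distinct. Alternatively, one can exhibit the explicit Jacobi elliptic solutions ($\mathrm{sn},\mathrm{cn},\mathrm{dn}$) for $0<k<1$ and point to their poles.
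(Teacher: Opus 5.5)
Your proposal is correct.

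\textbf{Part (b).} This is exactly the paper's argument. With two eigenvalues equal, one coordinate of the Euler--Arnold system is constant and the other two satisfy a linear constant-coefficient system. The trajectories are therefore entire, and Theorem~\ref{T:nabla-i-complete} applies. The paper normalizes to $\lambda_1=\lambda_2=1$, $\lambda_3=\lambda$ rather than treating the two coincidence patterns separately, which is immaterial.

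\textbf{Part (a).} Here the routes genuinely differ. The paper gives no argument of its own. It cites Aslam--Burns--Irvine for the Riemannian case and asserts that their proof carries over literally to indefinite $g$. You give a self-contained proof instead:
\begin{enumerate}
\item the quadratic integrals $E=g(x,x)$ and $L=\langle Ax,Ax\rangle$ reduce the Euler top to $\dot y^2=4yQ(y)$ for $y=x_3^2$;
\item a nonconstant solution, when the cubic has simple roots, is an affine image of $\wp$;
\item an entire elliptic function is constant.
\end{enumerate}
This buys independence from the reference. It also makes the pseudo-Riemannian case evident rather than asserted, since everything happens over $\bC$ and the signature never enters.

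Your ``main obstacle'' is in fact easy to settle. The roots of $yQ(y)$ are $0$ and the values of $x_3^2$ on the complex level set where $x_1=0$, resp.\ $x_2=0$. These are $\frac{L-\lambda_2E}{\lambda_3(\lambda_3-\lambda_2)}$ and $\frac{L-\lambda_1E}{\lambda_3(\lambda_3-\lambda_1)}$. So the three roots are distinct iff $L\neq\lambda_jE$ for $j=1,2,3$, i.e.\ iff $\sum_{k\neq j}\lambda_k(\lambda_k-\lambda_j)x_k(0)^2\neq0$. These are nonzero polynomials in $x(0)$, because the $\lambda_k$ are nonzero and pairwise distinct. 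Taking additionally all $x_j(0)\neq 0$ gives $\dot y(0)\neq0$, so $y$ is nonconstant.

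Because your bad initial vector lies in $su(2)$, you actually obtain the strong form of (a). Not only is $\mathcal E$ not $\bC$-complete, but for that real $v$ the curve $\eta\circ\gamma_v$ has no entire extension. Hence the complexification $(SL(2,\bC),\mathfrak D,\eta)$ itself is not $\bC$-complete, which is the notion used in the paper's discussion of entire metrics.
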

\begin{proof} a)
    When $g$ is Riemannian, this was proved in \cite{ABI18}. But the same proof works literally for the pseudo-Riemann case as well.

    b) Since the Levi-Civita connection doesn't change if we multiply the metric with a nonzero scalar, we can assume that $\lambda_1=\lambda_2=1$ and $\lambda_3=\lambda$. A trajectory $x(t)=x_1(t)\xi_1+x_2(t)\xi_2+x_3(t)\xi_3$ of the Euler-Arnold vector field must satisfy the following system of ODE's:
     \begin{equation}\label{eq:EA SU2}
        \begin{split}
            \dot x_1&=(1-\lambda)x_2x_3 \\
            \dot x_2&=(\lambda-1)x_1x_3 \\
            \dot x_3&=0.
        \end{split}
    \end{equation}
    This was calculated in \cite{ABI18} in the Riemann case, but again their proof works in the pseudo-Riemann case as well.  Clearly the solutions of this system are defined and holomorphic on $\bC$,
    hence by Theorem \ref{T:nabla-i-complete}, the metric $g$ is $\mathcal P$-complete.
\end{proof}
\begin{remarks}\label{r:SU(2)}
    When $g$ is Riemannian and generic, 
    Ashlam-Burns-Irvine could conclude, using Theorem~\ref{T:ABI} and Theorem~\ref{T:SU(2)} (a), that
    $g$ cannot be entire. In the pseudo-Riemann case Theorem~\ref{T:ABI} is not available so we cannot say that a generic $g$ is not entire. It is possible to write down the Euler-Arnold system of ODE's in terms of $\lambda_1$, $\lambda_2$, $\lambda_3$ in the generic case as well, it will be the same  as in the Riemannian case obtained in \cite{ABI18}. Although we know that it is not a $\bC$-complete system, it may still be true that it is  
    $i\bR$-complete, from what we could conclude that $g$ is $\mathcal P$-complete. Unfortunately the solutions of this system have Jacobi elliptic functions as coordinate functions and it seems very difficult to locate their poles and consequently to see whether the system is $i\bR$-complete or not. 

    When $g$ is special, we can assume it is  normalized as $\lambda_1=\lambda_2=1$, $\lambda_3=\lambda$. For $0<\lambda$,  $g$ is Riemannian and it is  entire iff $\lambda\le1$ \cite{ABI18}. It is not known which special metrics with $\lambda<0$ will be entire.
\end{remarks}
\section{Biinvariant connections}
The Koszul connection $\nabla$ is called biinvariant if it is both left and right invariant on $TG$, where $G$ is a real Lie group.
The following characterization of biinvariant connections is well known, see for example \cite{PA13} for its proof.
\begin{theorem}
    The left-invariant torsion free Koszul connection
    $\nabla$ is biinvariant iff
    \begin{equation}\label{E:biinv-con}      [x,\nabla_yz]=\nabla_{[x,y]}z+\nabla_y[x,z],
    \end{equation}
    for all $x,y,z\in\mathfrak{g}$.
\end{theorem}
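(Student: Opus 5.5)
The plan is to translate biinvariance into a statement about the bilinear map $\alpha(x,y)=(\nabla_xy)_e$ and then differentiate the right-invariance condition at the identity. A left invariant connection $\nabla$ is right invariant iff it is invariant under every conjugation $C_g=L_g\circ R_{g^{-1}}$, $g\in G$, because left invariance is already given. Since $\nabla$ is left invariant, $(C_g)_*$ maps left invariant vector fields to left invariant vector fields: $(C_g)_*X=\mathrm{Ad}_gX$. A left invariant connection is therefore $C_g$-invariant iff its value on left invariant fields at $e$ is preserved, that is, iff
\[
\mathrm{Ad}_g\,\alpha(y,z)=\alpha(\mathrm{Ad}_gy,\mathrm{Ad}_gz)\qquad\forall y,z\in\mathfrak g .
\]
So the first step is this reduction, which amounts to checking that $(C_g)_*(\nabla_YZ)=\nabla_{(C_g)_*Y}(C_g)_*Z$ on left invariant fields.

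The second step takes $g=\exp(sx)$ and differentiates at $s=0$. Using $\frac{d}{ds}\mathrm{Ad}_{\exp(sx)}=\mathrm{ad}_x$, we get exactly $[x,\alpha(y,z)]=\alpha([x,y],z)+\alpha(y,[x,z])$, which is \eqref{E:biinv-con}. This gives the direction "biinvariant $\Rightarrow$ \eqref{E:biinv-con}".

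For the converse, \eqref{E:biinv-con} says that $\mathrm{ad}_x$ acts as a derivation of the bilinear map $\alpha$. Put $F(s):=\mathrm{Ad}_{e^{sx}}^{-1}\alpha(\mathrm{Ad}_{e^{sx}}y,\mathrm{Ad}_{e^{sx}}z)$. Its derivative is $\mathrm{Ad}^{-1}$ applied to $-[x,\alpha(u,w)]+\alpha([x,u],w)+\alpha(u,[x,w])$, where $u=\mathrm{Ad}_{e^{sx}}y$ and $w=\mathrm{Ad}_{e^{sx}}z$, and this vanishes by \eqref{E:biinv-con}. Hence $F$ is constant, so $\mathrm{Ad}_g$ preserves $\alpha$ for every $g$ in the subgroup generated by $\exp(\mathfrak g)$.

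The main obstacle is connectedness. The argument only yields $\mathrm{Ad}$-invariance on the identity component, which is all of $G$ when $G$ is connected, as is assumed throughout this part of the paper. For disconnected $G$ the converse requires the condition for all $g$, and the proof should state this restriction (or the connectedness hypothesis) explicitly. Torsion-freeness is not needed for the equivalence itself; it only fixes the normalization $\alpha'=\frac12[\cdot,\cdot]$, whose antisymmetric part is automatically $\mathrm{Ad}$-invariant.
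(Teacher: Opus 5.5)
The paper gives no proof of its own here: it calls the characterization well known and refers to \cite{PA13}. Your argument is a correct, self-contained proof of the standard type. The steps are all sound:
\begin{itemize}
\item reducing right invariance to invariance under the conjugations $C_g$;
\item observing that $(C_g)_*$ sends the left invariant field of $x$ to that of $\mathrm{Ad}_g x$, so the condition becomes $\mathrm{Ad}_g\alpha(y,z)=\alpha(\mathrm{Ad}_g y,\mathrm{Ad}_g z)$;
\item differentiating along $\exp(sx)$ to obtain \eqref{E:biinv-con};
\item computing $F'(s)$, which correctly integrates the infinitesimal identity back to $\mathrm{Ad}(G_0)$-invariance.
\end{itemize}

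Two wording points. First, the fact that $(C_g)_*$ preserves left invariant fields has nothing to do with $\nabla$. What left invariance of $\nabla$ gives you is that $\nabla_YZ$ is left invariant for left invariant $Y,Z$. Second, the reduction to left invariant fields uses that $(C_g)_*\nabla$ is again a connection, so it equals $\nabla$ once the two agree on a global frame.

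Your connectedness caveat is a genuine correction, since the paper states the theorem for an arbitrary real Lie group. Take the group $G$ of affine maps $t\mapsto\pm t+b$ of $\bR$. Its Lie algebra is abelian, so $\alpha(x,y)=xy$ defines a torsion free left invariant connection that satisfies \eqref{E:biinv-con} trivially. But $\mathrm{Ad}_g=-1$ for the reflection $g$, and $\alpha(-y,-z)=yz\neq -yz=-\alpha(y,z)$ for $y,z\neq 0$, so this connection is not right invariant. Hence the theorem needs $G$ connected (true in all the paper's applications). Alternatively, \eqref{E:biinv-con} must be supplemented by $\mathrm{Ad}_g$-invariance of $\alpha$ for one $g$ in each component. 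Your remark that torsion freeness plays no role in the equivalence is also correct.
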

Since the Levi-Civita connection of a biinvariant pseudo-Riemannian metric is the canonical connection, we cannot obtain interesting biinvariant connections this way.  Also a result of Laquer \cite{Laq92} says that on a compact simple Lie group, except $SU(n)$ where $n\ge3$, the only biinvariant torsion free connection is the canonical connection. Nevertheless, there are interesting nontrivial biinvariant connections on noncompact Lie groups as we demonstrate later in Subsection~\ref{Ss:biinvFtype} and Subsection~\ref{Ss:biinv2nilp}. 
\section{ Lie groups  of type \texorpdfstring{$\mathcal{F}$}F}\label{S:Ftype}
A non-commutative Lie algebra $\mathfrak{g}$ is said to be of type  $\mathcal{F}$ if  for any $x,y\in\mathfrak g$,    $[x,y]$
is a linear combination of $x$ and $y$. Milnor showed \cite{M76} that this is equivalent to the existence of a linear functional $0\neq l\in\mathfrak{g}^*$ such that
\begin{equation}
    [x,y]=l(y)x-l(x)y.
\end{equation}
This implies that in every dimension up to isomorphism, there is only one such Lie algebra.
A non-commutative Lie group is said to be of type $\mathcal F$, if its Lie algebra is of type $\mathcal F$.
 Let $\mathfrak g$ be an $\mathcal F$ type Lie algebra and $G$ the unique simply connected Lie group with Lie algebra $\mathfrak g$. Consider the following realization of $G$ as a subgroup of the affine transformations of $\mathbb{R}^n$:
    $$\mathcal A^+(n,\mathbb R)=\left\{\left[\begin{matrix}
 aI_{n} & b\\0& 1
 \end{matrix}\right]\mid a\in\mathbb R, b\in\mathbb R^n , 0<a\right\}.
$$
$\mathcal{A}^+(n,\mathbb R)$ has dimension $n+1$, call its Lie algebra $\mathfrak{a}^+$. The smallest dimensional example is the group $\mathcal A^+(1,\mathbb R)$, the orientation-preserving affine parametrizations of the real line. Let us denote by $$e_0=\left[\begin{matrix}
 I_{n} & 0\\0& 0
 \end{matrix}\right],\quad\text{and}\quad e_j=e_{j,n+1},$$
where $e_{k,l}$ is an $(n+1)\times(n+1)$ matrix with the only nonzero entry being one in the k-th row and l-th column for $k,l=1,\dots,n$, and $span\{e_1,\dots,e_n\}=V$. Then $V$ is an abelian subalgebra of $\mathfrak{a}^+$, the derived subalgebra of $\mathfrak{a}^+$. This yields the following decomposition: $\mathfrak{a}^+=\langle e_0\rangle\oplus V$. Then any $x\in\mathfrak{a}^+$ can be written as $x=x_0e_0+x_v$, where $x_0\in\bR$ and $x_v\in V$. Hence the Lie bracket takes the following form 
\begin{equation}\label{E:F-Lie}
    [x,y]=x_0y_v-y_0x_v=x_0y-y_0y,\quad x,y\in\mathfrak{a}^+.
\end{equation}
 We also have that $ad_{e_0}|_V=Id_V$. 
\subsection{Biinvariant connections on 
\texorpdfstring{$\mathcal F$}F-type Lie groups}\label{Ss:biinvFtype}
\begin{theorem}\label{T:F-biinv-conn} Let $\nabla$ be a left-invariant torsion-free Koszul connection on $T\mathcal{A}^+(n,\bR)$. Then $\nabla$ is biinvariant if and only if there exists an $a\in\bR$ such that
\begin{equation}\label{E:biinvconn}
    \nabla_xy=\frac{1}{2}[x,y]+a(x_0y_0e_0+\frac{1}{2}x_0y_v+\frac{1}{2}y_0x_v)=\frac{a+1}{2}x_0y+\frac{a-1}{2}y_0x
\end{equation}
for any $x,y\in\mathfrak{a}^+$. Taking an arbitrary $a\in\bR$, the formula defines a $\mathcal P$-complete biinvariant connection.
\end{theorem}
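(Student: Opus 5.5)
The plan is to reduce the biinvariance condition \eqref{E:biinv-con} to an invariance condition on the symmetric part of $\nabla$, solve that condition using the eigenspace decomposition of $ad_{e_0}$, and then deduce $\mathcal P$-completeness from Corollary~\ref{c:ircomplete} by integrating the Euler-Arnold field explicitly.

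\emph{Reduction.} Since $\nabla$ is left invariant and torsion free, we write $\nabla_xy=\frac12[x,y]+\beta(x,y)$ with $\beta$ symmetric (Subsection~\ref{Ss:canonical}). Substituting into \eqref{E:biinv-con}, the bracket part $\frac12[x,[y,z]]=\frac12[[x,y],z]+\frac12[y,[x,z]]$ holds identically by the Jacobi identity. Hence $\nabla$ is biinvariant iff
\[
[x,\beta(y,z)]=\beta([x,y],z)+\beta(y,[x,z]),\qquad x,y,z\in\mathfrak a^+,
\]
that is, iff $\beta$ is $ad$-equivariant. By bilinearity it suffices to impose this for $x=e_0$ and for $x=u\in V$. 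From \eqref{E:F-Lie} we have $ad_{e_0}e_0=0$, $ad_{e_0}|_V=Id_V$, $ad_u e_0=-u$ and $ad_u|_V=0$.

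\emph{Solving for $\beta$.} Take $x=e_0$ first.
\begin{itemize}
\item $(y,z)=(e_0,e_0)$ gives $[e_0,\beta(e_0,e_0)]=0$. So $\beta(e_0,e_0)$ lies in the $0$-eigenspace $\langle e_0\rangle$, and we write $\beta(e_0,e_0)=a e_0$.
\item $(y,z)=(e_0,v)$ gives $ad_{e_0}\beta(e_0,v)=\beta(e_0,v)$, so $\beta(e_0,v)\in V$.
\item $(y,z)=(v,w)$ gives $ad_{e_0}\beta(v,w)=2\beta(v,w)$. Since $2$ is not an eigenvalue of $ad_{e_0}$ (whose eigenvalues are $0$ and $1$), $\beta(v,w)=0$.
\end{itemize}
Next take $x=u\in V$.
\begin{itemize}
\item $(y,z)=(e_0,e_0)$ gives $[u,ae_0]=-au$ on the left and $2\beta(-u,e_0)$ on the right, hence $\beta(e_0,u)=\frac a2 u$.
\item The remaining cases reduce to $0=0$, because $ad_u$ kills $V$ and $\beta(V,V)=0$.
\end{itemize}
This gives $\beta(x,y)=a(x_0y_0e_0+\frac12x_0y_v+\frac12y_0x_v)$, which is \eqref{E:biinvconn}. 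Rewriting via $[x,y]=x_0y-y_0x$ yields the second form $\frac{a+1}2x_0y+\frac{a-1}2y_0x$. For the converse one checks directly that this $\beta$ satisfies the equivariance identity; by bilinearity it is enough to check it on $x\in\{e_0\}\cup V$, which is the same computation read backwards.

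\emph{$\mathcal P$-completeness.} By Corollary~\ref{c:ircomplete} it suffices to show that $-\beta(x,x)$ is $i\bR$-complete. Since $\beta(x,x)=a x_0^2e_0+a x_0x_v$, the Euler-Arnold system is
\[
\dot x_0=-a x_0^2,\qquad \dot x_v=-a x_0x_v .
\]
Its solutions are explicit:
\[
x_0(t)=\frac{c_0}{1+ac_0t},\qquad x_v(t)=\frac{c_v}{1+ac_0t},
\]
where $c_0=x_0(0)$ and $c_v=x_v(0)$. These are holomorphic except possibly at the single real point $t=-1/(ac_0)\neq0$ (when $ac_0\neq0$), so each trajectory extends holomorphically to a neighborhood of $i\bR$. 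Therefore Corollary~\ref{c:ircomplete} applies for every $a\in\bR$ (with $G=\mathcal A^+(n,\bR)$, which is connected).

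I expect the main obstacle to be the bookkeeping in the equivariance case analysis: one must make sure no constraint from mixed cases such as $x=u$, $(y,z)=(e_0,v)$ has been missed, since these are what force the coefficient of $\beta(e_0,\cdot)|_V$ to be exactly $a/2$.
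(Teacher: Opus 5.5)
Your proof is correct. It reaches the classification by a route organized differently from, and somewhat cleaner than, the paper's.

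\textbf{How the paper argues.} The paper never separates off the bracket part. It substitutes $x=e_0$ and $x=x_v\in V$ into \eqref{E:biinv-con} for the full connection and then specializes to $y=z=x$. From this it extracts $(\nabla_{x_v}x_v)_0=0$, then $\nabla_{x_v}x_v=0$ via the identity $\nabla_{x_v}x_v=2\nabla_{x_v}x_v$, and $\nabla_{e_0}e_0=ae_0$. It then derives $\nabla_xx=ax_0x$ and only at the end polarizes. For the converse it separately expands the three terms of \eqref{E:biinv-con} for general $x,y,z$.

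\textbf{What your route buys.} You use the Jacobi identity to reduce biinvariance to $ad$-equivariance of the symmetric part $\beta$. This has several advantages:
\begin{enumerate}
\item The bracket terms disappear from the bookkeeping.
\item You can work on basis pairs instead of with the quadratic map followed by polarization.
\item The paper's doubling trick becomes the transparent statement that weight $2$ does not occur for $ad_{e_0}$. This kills both components of $\beta(V,V)$ at once using only $x=e_0$.
\item Every basis case you examine is an equivalence, so sufficiency follows by reading the computation backwards, whereas the paper has to recompute.
\end{enumerate}

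\textbf{$\mathcal P$-completeness.} Here the two arguments coincide. Your explicit solution $x(t)=x(0)/(1+ac_0t)$ is exactly the content of Proposition~\ref{P:l-L-Omega}(a) with $l(x)=-ax_0$. Corollary~\ref{c:ircomplete} is just Theorem~\ref{T:nabla-i-complete} applied to $\frac12[x,y]+\beta(x,y)$, the route the paper takes.

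\textbf{A small correction to your closing remark.} The coefficient $a/2$ of $\beta(e_0,\cdot)|_V$ is forced by the case $x=u\in V$, $(y,z)=(e_0,e_0)$, as your own computation shows. The mixed cases such as $x=u$, $(y,z)=(e_0,v)$ impose no condition at all.
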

The fact that formula \eqref{E:biinvconn} yields all biinvariant connections in the 2-dimesional case was observed in \cite{P19}.
\begin{proof}
    Suppose that $\nabla$ is biinvariant. Using \eqref{E:biinv-con} and \eqref{E:F-Lie}, with $x=e_0$ and $y,z\in\mathfrak{a}^+$ we get
    \begin{equation}\label{E:F-e_0}
    (\nabla_yz)_v=\nabla_{y_{v}}z+\nabla_y z_v.
    \end{equation}
    Using again \eqref{E:biinv-con} and \eqref{E:F-Lie}, with $x=x_v\in V$ and $y,z\in\mathfrak{a}^+$ we get
    \begin{equation}\label{E:F-V}
        -\left(\nabla_yz\right)_0 x_v=-y_0\nabla_{x_v}z-z_0\nabla_y x_v.
    \end{equation}
    If we substitute $y=z=x$ into \eqref{E:F-e_0} and $\eqref{E:F-V}$, we get
    \begin{align}
        (\nabla_x x)_v&=\nabla_{x_v}x+\nabla_x x_v,\label{eq:19}\\  
\left(\nabla_xx\right)_0x_v&=
x_0\nabla_{x_v}x+x_0\nabla_x x_v=x_0(\nabla_xx)_v.\label{eq:20}
    \end{align}
    Choosing $x_0=0$ in formula \eqref{eq:20} implies 
    $$\left(\nabla_{x_v}x_v\right)_0=0.$$
    Hence, substituting $x=x_v\in V$ into formula \eqref{eq:19} we obtain
    $$\nabla_{x_v} x_v=(\nabla_{x_v} x_v)_v=\nabla_{x_v}x_v+\nabla_{x_v} x_v=2\nabla_{x_v}x_v,$$
    therefore $\nabla_{x_v}x_v=0$.
    On the other hand substituting $x=e_0$ into formula \eqref{eq:20} we see that  $$(\nabla_{e_0}e_0)_0\cdot0=1\cdot(\nabla_{e_0}e_0)_v,$$
    which means that there exists an $a\in\bR$ such that $\nabla_{e_0}e_0=ae_0$. 
    From equation $\eqref{eq:19}$ 
    \begin{equation}
        \nabla_xx-\left(\nabla_xx\right)_0e_0=(\nabla_xx)_v=\nabla_{x_v}x+\nabla_{x}x_v,
    \end{equation}
    thus
    \begin{equation}    x_0^2\nabla_{e_0}e_0+x_0\nabla_{x_v}e_0+x_0\nabla_{e_0}x_v+\nabla_{x_v}x_v-\left(\nabla_xx\right)_0e_0=2\nabla_{x_v}x_v+x_0\nabla_{x_v}e_0+x_0\nabla_{e_0}x_v.
    \end{equation}       
    Using $\nabla_{x_v}x_v=0$ we can conclude 
    \begin{equation}\label{eq:21}
        x_0^2\nabla_{e_0}e_0-\left(\nabla_xx\right)_0e_0=x_0^2ae_0-\left(\nabla_xx\right)_0e_0=0.
    \end{equation}
    By equation \eqref{eq:20} and \eqref{eq:21}
    \begin{equation}
        x_0\nabla_x x=x_0((\nabla_xx)_v+(\nabla_xx)_0e_0)=(\nabla_xx)_0x_v+x_0(\nabla_xx)_0e_0=ax_0^2(x_v+x_0e_0).
    \end{equation}
    Therefore
    \begin{equation}\label{eq:22}
        \nabla_xx=ax_0x.
    \end{equation}
    Since  $\nabla$ is torsion free,  the polarization identity for the symmetric bilinear form
    $\beta:\mathfrak g\times\mathfrak g\ra\mathfrak g$, $\beta(x)=\nabla_xx$ together with \eqref{eq:22} yields 
   \eqref{E:biinvconn}.
    Suppose now, that $\nabla$ is defined by the formula \eqref{E:biinvconn}. Then
\eqref{E:biinvconn} and \eqref{E:F-Lie} yield 
    \begin{align*}
        [x,\nabla_yz]&=\frac{1}{2}[x,[y,z]]+a\left(x_0\frac{1}{2}(y_0z_v+z_0y_v)-y_0z_0x_v\right), \\
        \nabla_{[x,y]}z&=\frac{1}{2}[[x,y],z]+a\left(\frac{1}{2}z_0(x_0y_v-y_0x_v)\right), \\
        \nabla_y[x,z]&=\frac{1}{2}[y,[x,z]]+a\left(\frac{1}{2}y_0(x_0z_v-z_0x_v)\right).
    \end{align*}    
    Adding the last two formulas gives the first which in light of \eqref{E:biinv-con} proves the biinvariance of $\nabla$.
     From formula~\eqref{E:biinvconn} we also get  $\nabla_xx=a x_0x$. Therefore the integral curves of the Euler-Arnold vector field of $\nabla$ are the solutions of
    \begin{equation}
        \dot{x}=-ax_0x.
    \end{equation}
    Proposition~\ref{P:l-L-Omega} a) tells us that such a vector field is  $i\bR$-complete, hence by Theorem~\ref{T:nabla-i-complete}   $\nabla$ is $\mathcal P$-complete.
\end{proof}
\begin{prop}\label{P:F-biinvariant-curvature}
    Let  $a\in\bR$ and $\nabla$ the  Koszul connection on $T\mathcal{A}^+(n,\bR)$ defined by \eqref{E:biinvconn}. Then for $x,y,z,w\in\mathfrak{a}^+$
    \begin{equation}\label{E:curvature-F-biinvariant}
        R(x,y)z=\frac{a^2-1}{4}z_0(y_0x-x_0y),
    \end{equation}
     and
    \begin{equation}\label{E:covariant-F-biinvariant}
        (\nabla_wR)(x,y,z)=\frac{a^2-1}{2}aw_0z_0(x_0y-y_0x).
    \end{equation}   
\end{prop}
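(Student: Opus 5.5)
The plan is a direct computation with left invariant vector fields, using only the closed formula \eqref{E:biinvconn}. Set $p:=\frac{a+1}{2}$ and $q:=\frac{a-1}{2}$, so that $\nabla_xy=p\,x_0y+q\,y_0x$. Here $x_0$ is the $e_0$-coordinate, i.e. a linear functional on $\mathfrak a^+$.

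By \eqref{E:F-Lie}, the correct form of the bracket is $[x,y]=x_0y_v-y_0x_v=x_0y-y_0x$, since the $e_0$-terms cancel. I will use two facts repeatedly:
\begin{itemize}
\item $[x,y]_0=0$;
\item $(\nabla_yz)_0=(p+q)y_0z_0=a\,y_0z_0$.
\end{itemize}
Since left invariant fields have constant coefficients in the basis $e_0,\dots,e_n$, curvature can be evaluated through $R(x,y)z=\nabla_x\nabla_yz-\nabla_y\nabla_xz-\nabla_{[x,y]}z$ applied to elements of $\mathfrak a^+$.

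For the curvature, I first expand the iterated derivative:
$$\nabla_x(\nabla_yz)=p\,x_0(p\,y_0z+q\,z_0y)+q\,a\,y_0z_0\,x.$$
Antisymmetrizing in $x,y$ kills the $p^2x_0y_0z$ term. What remains is $q(a-p)z_0(y_0x-x_0y)$, and since $a-p=q$ this equals $q^2z_0(y_0x-x_0y)$. Because $[x,y]_0=0$, the last term is $\nabla_{[x,y]}z=q\,z_0[x,y]=q\,z_0(x_0y-y_0x)$. Combining gives $R(x,y)z=q(q+1)z_0(y_0x-x_0y)$. Since $q(q+1)=\frac{a^2-1}{4}$, this is \eqref{E:curvature-F-biinvariant}.

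For \eqref{E:covariant-F-biinvariant}, write $c:=\frac{a^2-1}{4}$ and $T(x,y,z):=z_0(y_0x-x_0y)$, so that $R=cT$. I then expand
$$(\nabla_wR)(x,y,z)=\nabla_w(R(x,y)z)-R(\nabla_wx,y)z-R(x,\nabla_wy)z-R(x,y)\nabla_wz.$$
The first term is handled as follows. The vector $u:=T(x,y,z)$ has $u_0=z_0(y_0x_0-x_0y_0)=0$, so $\nabla_wu=p\,w_0u$. In each of the other three terms, one argument is replaced by $\nabla_wx=p\,w_0x+q\,x_0w$ (or the analogue for $y$, $z$), and its $0$-component becomes $a\,w_0x_0$.

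Substituting and collecting terms, the $p$-parts contribute $c\,p\,w_0T(1-3)$ up to bookkeeping. The $q$-parts produce terms involving $w$, such as $q\,x_0z_0(y_0w-w_0y)$. The key observation is that these $w$-terms cancel pairwise between the $x$-slot and the $y$-slot. The $z$-slot term contributes $a\,w_0z_0(\dots)$.

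The only real obstacle is this bookkeeping. I expect the surviving coefficient to reduce to $-2c\,a\,w_0$ times $T$, that is $\frac{a^2-1}{2}\,a\,w_0z_0(x_0y-y_0x)$. As a sanity check, for $a=0$ this vanishes, consistent with the canonical connection of a 2-step solvable group being locally symmetric (Proposition~\ref{P:Cartanlocsymm}). I will track signs carefully using the convention $(\nabla_wR)(x,y,z)=(\nabla_wR)(x,y)z$.
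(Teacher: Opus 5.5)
Your computation is correct and is essentially the paper's direct calculation from \eqref{E:biinvconn}; the only difference is in the curvature step: the paper first uses biinvariance \eqref{E:biinv-con} and torsion-freeness to rewrite $R(x,y)z=\nabla_{\nabla_yz}x-\nabla_y\nabla_zx$, whereas you work from the definition, which is just as short since $[x,y]_0=0$. When you finish $\nabla R$, be careful that between the $x$- and $y$-slots only the components along the vector $w$ cancel, leaving $q\,w_0T$, and the $z$-slot's $q$-part gives another $q\,w_0T$, so the total is $c\,(p-3p-2q)\,w_0T=-2ca\,w_0T$, exactly as you predicted.
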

\begin{proof}
    First, since $\nabla$ is biinvariant and torsion-free, equation \eqref{E:biinv-con} yields
    \begin{equation}\label{E:biinv-curvature}
        \begin{split}
        R(x,y)z&=\nabla_x\nabla_yz-\nabla_y\nabla_xz-\nabla_{[x,y]}z \\
        &=\nabla_x\nabla_yz-\nabla_y\nabla_xz-[x,\nabla_yz]+\nabla_y[x,z]\\
        &=\nabla_x\nabla_yz-\nabla_y\nabla_xz-(\nabla_x\nabla_yz-\nabla_{\nabla_yz}x)+\nabla_y(\nabla_xz-\nabla_zx)\\
        &=\nabla_{\nabla_yz}x-\nabla_y\nabla_zx.
    \end{split}
    \end{equation}
    Using  \eqref{E:biinv-curvature} and \eqref{E:biinvconn} we can calculate the curvature of $\nabla$ as follows:
    \begin{equation}
        \nabla_{\nabla_yz}x=\frac{a+1}{2}ay_0z_0x+\frac{a-1}{2}x_0\left(\frac{a+1}{2}y_0z+\frac{a-1}{2}z_0y\right),
    \end{equation}
    and
    \begin{equation}
        \nabla_y\nabla_zx=\frac{a+1}{2}y_0\left(\frac{a+1}{2}z_0x+\frac{a-1}{2}x_0z\right)+\frac{a-1}{2}az_0x_0y,
    \end{equation}
    which yields  \eqref{E:curvature-F-biinvariant}. Now  $(\nabla_wR)(x,y,z)$ is defined by 
    \begin{equation}\label{E:covariant curvature}
        (\nabla_wR)(x,y,z)=\nabla_w(R(x,y)z)-R(\nabla_wx,y)z-R(x,\nabla_wy)z-R(x,y)\nabla_wz.
    \end{equation}
    Using  \eqref{E:biinvconn} and \eqref{E:curvature-F-biinvariant} we can calculate the terms on the right hand side one by one.
    \begin{equation}\label{eq:43}
        \nabla_w(R(x,y)z)=\frac{a+1}{2}w_0\frac{a^2-1}{4}z_0(y_0x-x_0y),
    \end{equation}
    \begin{equation}\label{eq:44}
        R(\nabla_wx,y)z=\frac{a^2-1}{4}z_0\left(y_0\left(\frac{a+1}{2}w_0x+\frac{a-1}{2}x_0w\right)-aw_0x_0y\right),
    \end{equation}
    switching the roles of $x$ and $y$ and using the identity $R(u,v)=-R(v,u)$ for $u,v\in\mathfrak{a}^+$ we obtain 
    \begin{equation}\label{eq:45}
        R(x,\nabla_wy)z=-\frac{a^2-1}{4}z_0\left(x_0\left(\frac{a+1}{2}w_0y+\frac{a-1}{2}y_0w\right)-aw_0y_0x\right),
    \end{equation}
and finally
    \begin{equation}\label{eq:46}
        R(x,y)\nabla_w z=\frac{a^2-1}{4}aw_0z_0(y_0x-x_0y).
    \end{equation}
    Substituting  \eqref{eq:46}, \eqref{eq:45}, \eqref{eq:44} and \eqref{eq:43} into \eqref{E:covariant curvature} we obtain \eqref{E:covariant-F-biinvariant}.
\end{proof}
 From Theorem \ref{T:F-biinv-conn} and Proposition \ref{P:F-biinvariant-curvature} we get:
\begin{theorem}
    Let $a\in\bR$  and $\nabla$ the corresponding biinvariant Koszul connection on $T\mathcal{A}^+(n,\bR)$ defined by formula \eqref{E:biinvconn}. When $a=0$, $\nabla$  is the canonical connection, hence locally symmetric. When $a=\pm1$, $\nabla$ is flat, so locally symmetric.  However for $a\in\bR\setminus\{-1,0,1\}$ the connection is not locally symmetric but it is $\mathcal P$-complete.
\end{theorem}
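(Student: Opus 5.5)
The plan is to read off everything from Theorem~\ref{T:F-biinv-conn} and Proposition~\ref{P:F-biinvariant-curvature}, splitting into the three cases $a=0$, $a=\pm1$, and $a\notin\{-1,0,1\}$.

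\emph{Case $a=0$.} Formula \eqref{E:biinvconn} reduces to $\nabla_xy=\frac12[x,y]$. This is the torsion free left invariant connection with symmetric part $\beta=0$, i.e.\ the canonical connection $\overset{c}\nabla$. As a consistency check, \eqref{E:covariant-F-biinvariant} gives $\nabla R=0$ since its prefactor contains $a$. Alternatively, $\mathfrak a^+$ is 2-step solvable, because its derived algebra $V$ is abelian, so Proposition~\ref{P:Cartanlocsymm} applies.

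\emph{Case $a=\pm1$.} Here $a^2-1=0$, so \eqref{E:curvature-F-biinvariant} gives $R\equiv0$. Since $\nabla$ is torsion free, it is flat and in particular locally symmetric. Strictly, flatness and local symmetry need $R=0$ for all vector fields and not just for left invariant ones. This follows because $R$ is a tensor and left invariant fields span each tangent space, and likewise for $\nabla R$.

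\emph{Case $a\notin\{-1,0,1\}$.} Local symmetry means $\nabla R=0$ together with torsion freeness, and torsion freeness holds by construction. By tensoriality and left invariance, it suffices to exhibit $w,x,y,z\in\mathfrak a^+$ with $(\nabla_wR)(x,y,z)\ne0$ in \eqref{E:covariant-F-biinvariant}. Take $w=z=x=e_0$ and $y=e_1\in V$. Then $w_0=z_0=x_0=1$ and $y_0=0$, so
\[
(\nabla_wR)(x,y,z)=\frac{a^2-1}{2}\,a\,e_1,
\]
which is nonzero exactly when $a\notin\{-1,0,1\}$. Hence $\nabla$ is not locally symmetric.

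$\mathcal P$-completeness for every $a$, and in particular in this last case, is part of Theorem~\ref{T:F-biinv-conn}. That argument rests on the Euler-Arnold field $\dot x=-ax_0x$ being $i\bR$-complete, via Theorem~\ref{T:nabla-i-complete}.

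The main point needing care is that Proposition~\ref{P:F-biinvariant-curvature} computes $R$ and $\nabla R$ only on left invariant fields. The passage to full tensor statements is legitimate because both are $C^\infty$-multilinear and left invariant fields form a global frame. The rest is bookkeeping.
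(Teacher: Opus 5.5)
Your proposal is correct and follows the paper's route exactly. The paper states this theorem as an immediate consequence of Theorem~\ref{T:F-biinv-conn} and Proposition~\ref{P:F-biinvariant-curvature}; your choice $w=z=x=e_0$, $y=e_1$, which gives $(\nabla_wR)(x,y,z)=\frac{a(a^2-1)}{2}e_1$, together with the tensoriality remark, simply makes explicit the non-vanishing check that the paper leaves implicit.
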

\subsection{Left invariant pseudo-Riemannian metrics on \texorpdfstring{$\mathcal F$}F-type Lie groups}
 Let  $G$ be the unique simply connected 
 $\mathcal F$-type Lie group. Here we do not need the specific realization of $G$ used in Subsection~\ref{Ss:biinvFtype}.
 Milnor \cite{M76} proved that  any left invariant Riemannian metric $g$ on $TG$  has constant negative sectional curvatures. Since $g$ is automatically complete, $(G,g)$ will be isometric to the real hyperbolic space $\mathbb R H^n$ with
 $n=\dim\mathfrak g$. It is known (\cite{Lau03}), that $G$ admits essentially only one left invariant Riemannian metric (i.e. up to scaling and $Aut G$).
  Wolf proved  \cite{W64} that $G$ admits a left invariant flat Lorentz metric. This was extended by Nomizu \cite{N79}  showing that there also exists left invariant Lorentz metrics with constant sectional curvature $c$  for arbitrary  $c\in \bR$. He also showed that every left invariant Lorentz metric has constant sectional curvature. Finally  
   left invariant pseudo-Riemannian metrics of arbitrary signature were classified in \cite{KOTT16}, \cite{VS20}, they all have constant sectional curvatures. 
\begin{theorem}\label{T:type F P-complete} Let  $G$ be an $\mathcal F$-type Lie group and $g$ a left invariant pseudo-Riemannian metric on $TG$. Then  $g$ is $\mathcal P$-complete.  It is entire  iff it is flat.
\end{theorem}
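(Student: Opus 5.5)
The plan is to avoid the Euler--Arnold route for $\mathcal P$-completeness, because that vector field is in general not $i\bR$-complete, and to use instead the constant curvature of these metrics.

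First I compute the Euler--Arnold field, to see why Theorem~\ref{T:nabla-i-complete} fails here. Use Milnor's form $[x,y]=l(y)x-l(x)y$ and let $L\in\mathfrak g$ be the $g$-dual of $l$, so $g(y,L)=l(y)$. Then $ad^*_xz=g(x,z)L-l(x)z$, hence by Corollary~\ref{C:EAequation}
$$\mathcal W(x)=g(x,x)L-l(x)x .$$
Put $q=g(x,x)$, $s=l(x)$ and $c=g(L,L)$. Along a trajectory $q$ is constant and $s$ solves the Riccati equation $\dot s=qc-s^2$. The ansatz $x=y/E$ with $\ddot E=qcE$, $E(0)=1$, $\dot E(0)=s_0$ linearises the system. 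For $qc>0$ and $s_0=0$ the function $E(it)=\cos(\sqrt{qc}\,t)$ has zeros on $i\bR$, and the numerator $y$ does not vanish there unless $x_0\parallel L$. So $\mathcal W$ is not $i\bR$-complete in general, consistent with the introduction. I will therefore not use Theorem~\ref{T:nabla-i-complete}.

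For $\mathcal P$-completeness I use the classification results quoted before the theorem (Milnor, Nomizu, \cite{KOTT16}, \cite{VS20}): every left invariant pseudo-Riemannian metric on $G$ has constant sectional curvature. For constant curvature the curvature tensor is $R(x,y)z=\kappa(g(y,z)x-g(x,z)y)$ with $\kappa$ constant, so $\nabla R=0$ and the Levi-Civita connection is affine locally symmetric. Theorem~\ref{T:locsymm} then gives $\mathcal P$-completeness immediately. The same argument applies to any $\mathcal F$-type group, not only the simply connected one, since the statement is local in the curvature.

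For \emph{entire iff flat}, I treat the two directions separately.

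\emph{Flat implies entire.} A flat torsion free connection is locally affine. In affine charts the map $v\mapsto p+iv$ is compatible on overlaps, because transition maps are affine and therefore extend holomorphically with $\Omega_v$ respected. Hence it gives an ac-structure on all of $TG$; alternatively, flat connections are known to be entire from \cite{Sz04}.

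\emph{Non-flat implies not entire.} Suppose $\kappa\neq0$. I choose $v\in T_eG$ with $\kappa\,g(v,v)<0$; this is possible in the Riemannian case ($\kappa<0$) and in every indefinite case. Along $\gamma_v$, the normal Jacobi fields satisfy $\ddot J=-\kappa g(v,v)J=k^2J$ with $k>0$, so they are spanned by $\cosh(kt)$ and $\sinh(kt)/k$ times parallel fields. By the Jacobi-field criterion of \cite{Sz91}, as extended to Koszul connections in \cite{Sz04}, an ac-structure on all of $TG$ forces the ratio $\tanh(kz)/k$ to extend holomorphically to the whole strip, i.e. to all $z=\sigma+i\tau$ that $\Omega_v$ maps into $TG$. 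But $\cosh(kz)$ vanishes at $z=i\pi/(2k)$. Equivalently, the complexified geodesic $\chi_{\tau v}$ blows up before time $i$ once $\tau\ge\pi/(2k)$, so $\tau v\notin N_*$.

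I expect this last step to be the main obstacle: carefully quoting the right necessary condition for entireness in the pseudo-Riemannian/Koszul setting, and checking that it applies along a geodesic whose velocity has $g(v,v)\neq0$ of the chosen sign. If that criterion is awkward to use, the fallback is to argue directly. By Proposition~\ref{P:ac-hol-map}, with the holomorphic map given by the ac-structure, the curve $\Omega_v$ restricted to the imaginary segment would be an entire holomorphic extension. This contradicts the pole of the Jacobi ratio at $i\pi/(2k)$.
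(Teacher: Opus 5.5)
Your $\mathcal P$-completeness argument (constant curvature, so $\nabla R=0$, then Theorem~\ref{T:locsymm}) and your flat $\Rightarrow$ entire argument are the same as the paper's; the Euler--Arnold computation is not needed. The gap is in non-flat $\Rightarrow$ not entire. The paper does not use Jacobi fields for this direction. For definite $g$ it passes to the Riemannian one of $\pm g$ (same Levi-Civita connection), uses Milnor's $c<0$, and applies \cite{LSz91}, Theorem 4.3 (entire metrics have nonnegative sectional curvature). For indefinite $g$ with $c\neq 0$ it cites \cite{Sz04}, Theorem 8.4 (indefinite space forms are entire iff flat).

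The criterion you use in place of these is misstated, and your fallback does not work.

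\emph{The criterion.} Holomorphic extension of the Jacobi ratio to all of $\Omega_v^{-1}(TG)$ is not a necessary condition for entireness. For the round sphere the ratio is $\tan z$, which has poles at the real points $\pi/2+k\pi$ (conjugate points), yet the sphere is entire. The condition in \cite{Sz91} is meromorphic extension with positive (definite) imaginary part on $0<\Im z<r$. It is this positivity that rules out poles off $\bR$, and only in that form does the pole of $\tanh(kz)$ at $i\pi/(2k)$ give a contradiction.

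\emph{The ``equivalently''.} The obstruction is not a blow-up of a complexified geodesic. In the Riemannian case $(G,g)$ is a rescaled $\bR H^n$, which embeds in the complex quadric, where all holomorphic geodesics are entire. What fails at $\tau k=\pi/2$ is that the polar map (for curvature $-1$, $v\mapsto\cos|v|\,p+i\sin|v|\,v/|v|$) stops being a local diffeomorphism. This is exactly the phenomenon that makes these metrics $\mathcal P$-complete but not entire.

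\emph{The fallback.} Proposition~\ref{P:ac-hol-map} cannot produce a contradiction. It only says that holomorphic maps on $N$ are polar maps, and the extension along $[0,i\tau]$ it asks for is supplied by $\Omega_v$ itself. A pole of a ratio of Jacobi fields is not a singularity of any curve that proposition produces.

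To close the gap, either state the positivity criterion correctly, in the Koszul version of \cite{Sz04}, or cite the two theorems as the paper does.
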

\begin{proof} Let $(\widetilde G,\widetilde g)$ be the universal cover of $G$ with the induced left invariant metric $\widetilde g$.
From what was said above, the   sectional curvatures of $\widetilde g$, hence of $g$ as well, are constant, denote it by $c$.
 This implies that   $(G,g)$ is locally symmetric (\cite{O83}), hence by
 Theorem~\ref{T:locsymm}  $g$ is $\mathcal P$-complete.
  By \cite{Sz04} Theorem 8.4,  a pseudo-Riemannian space form with nondefinite metric is entire iff its sectional curvatures are zero. This    implies that a nondefinite left invariant pseudo-Riemannian metric with $c\not=0$ is not entire.
  When
  $g$ is definite, we can assume it is Riemannian (since $g$ and $-g$ has the same Levi-Civita connection) 
  and so $c$ is negative. But by Theorem 4.3 in \cite{LSz91}, all the sectional curvatures of an entire metric must be nonnegative. So $g$ is not entire.
   When $c=0$, $(G,g)$ is locally isometric to a pseudo-Euclidean space of appropriate signature,  so $g$ will be entire.  
\end{proof}
\begin{theorem}\label{T:type F i-complete}
    Let $g$ be a left-invariant pseudo-Riemannian metric 
    on a Lie group $G$ of type $\mathcal F$.
    Then the Euler-Arnold vector field is 
    $i\bR$-complete iff either $g$   is degenerate on $\mathfrak g'$ (the derived  Lie subalgebra),  or $g$  is definite on $\mathfrak g'$ but indefinite on $\mathfrak g$. 
\end{theorem}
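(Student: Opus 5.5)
The plan is to reduce the Euler-Arnold system to a scalar Riccati equation, which can be linearized and solved explicitly. Write $[x,y]=l(y)x-l(x)y$ with $0\neq l\in\mathfrak g^*$, so that $\mathfrak g'=\ker l$. Since $g_e$ is nondegenerate, there is a unique $L\in\mathfrak g$ with $g(L,z)=l(z)$ for all $z$.

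\textbf{Computing $ad^*$ and the Euler-Arnold field.} From $g(ad^*_xy,z)=g(y,[x,z])=l(z)g(x,y)-l(x)g(y,z)$ we get $ad^*_xy=g(x,y)L-l(x)y$. By Corollary~\ref{C:EAequation}, the Euler-Arnold vector field is therefore
$$\mathcal W(x)=g(x,x)L-l(x)x.$$

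\textbf{First integrals and linearization.} Along a trajectory put $Q:=g(x,x)$ and $u:=l(x)$, and set $c:=l(L)=g(L,L)$. A direct computation gives $\dot Q=2Q\,l(x)-2uQ=0$, so $Q$ is constant, and $\dot u=cQ-u^2$, a Riccati equation. Write $a:=cQ$ and let $y$ be the solution of $\ddot y=ay$, $y(0)=1$, $\dot y(0)=u_0$. Then $u=\dot y/y$. The equation $\dot x=QL-ux$ is linear in $x$ once $u$ is known, and it integrates to
$$x(t)=\frac{x_0+QL\int_0^t y}{y(t)}.$$
The numerator is entire, so a trajectory extends holomorphically near $i\bR$ iff the numerator vanishes at every zero of $y$ on $i\bR$. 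Since $\mathcal W$ is homogeneous, the remark after Definition~\ref{d:iRcompl} shows it suffices to check this near $[0,i]$, but the explicit formula handles all of $i\bR$ directly.

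\textbf{Zeros of $y$ on $i\bR$.} This is a case analysis in the sign of $a$, using only real initial data.
\begin{itemize}
\item If $a<0$, then $y(is)=\cosh(\sqrt{|a|}s)+iu_0\sinh(\sqrt{|a|}s)/\sqrt{|a|}$ has nonvanishing real part, so $y$ has no zeros on $i\bR$.
\item If $a=0$, then $y=1+u_0t$ vanishes only at a real point, so again there are no zeros on $i\bR$.
\item If $a>0$, then $y(is)=\cos(\sqrt a s)+i(u_0/\sqrt a)\sin(\sqrt a s)$. This vanishes iff $u_0=0$ and $\sqrt a s\in\pi/2+\pi\bZ$.
\end{itemize}
So a trajectory is bad only if $x_0\in\mathfrak g'$ with $cg(x_0,x_0)>0$. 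In that case $\int_0^ty=\sinh(\sqrt at)/\sqrt a$. At $t=i\pi/(2\sqrt a)$ the numerator equals $x_0+iQL/\sqrt a$. This is nonzero: its imaginary part is $QL/\sqrt a$, and $L\neq0$, $Q\neq0$. Hence a genuine pole lies on $i\bR$.

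\textbf{Translating to the criterion.} $\mathcal W$ is $i\bR$-complete iff there is no $x\in\mathfrak g'$ with $c\,g(x,x)>0$.
\begin{itemize}
\item Since $c=l(L)$, we have $c=0$ iff $L\in\mathfrak g'\cap\mathfrak g'^{\perp}$, where $\mathfrak g'^{\perp}=\mathbb RL$. This holds iff $g$ is degenerate on $\mathfrak g'$, and then the condition is satisfied.
\item If $c\neq0$, then $\mathfrak g=\mathfrak g'\oplus\mathbb RL$ is a $g$-orthogonal decomposition, so $g|_{\mathfrak g'}$ is nondegenerate. The condition then says that $g|_{\mathfrak g'}$ is definite with sign opposite to that of $g(L,L)=c$. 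This holds iff $g|_{\mathfrak g'}$ is definite and $g$ is indefinite on $\mathfrak g$.
\end{itemize}

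The main obstacle is the $a>0$ case. One must correctly identify the pole locations of the complexified solution, and in particular rule out cancellation by the numerator. The explicit formula settles this, since the imaginary part $QL/\sqrt a$ of the numerator is nonzero.
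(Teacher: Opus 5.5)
Your proof is correct and is essentially the paper's argument in coordinate-free form. Your formula $\mathcal W(x)=g(x,x)L-l(x)x$ is exactly the normal form $q(v,x)x-q(x,x)v$ of Proposition~\ref{P:q-v} with $q=g$ and $v=-L$, and your conserved quantity, the Riccati equation for $l(x)$, and the pole analysis re-derive that proposition inline, treating the degenerate case ($c=0$) and the nondegenerate case together rather than through the paper's two adapted bases.

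Your sign is the correct one: a bad trajectory requires $l(x_0)=0$ and $c\,g(x_0,x_0)>0$, i.e.\ $q(v,v)q(u,u)>0$. The inequality as printed in Proposition~\ref{P:q-v} is reversed, although the paper's applications, including this theorem, use the correct sign.
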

\begin{proof} We take the realization 
$\mathcal{A}^+(n,\bR)$ of $G$ and use the notation from the beginning of Section~\ref{S:Ftype}.

Suppose that $g|_V$ is nondegenerate. Choose an orthogonal basis $f_1,\dots, f_n\in V$ and let 
$$f_0:=e_0-\sum_{j=1}^{n}\frac{g(e_0,f_j)}{g(f_j,f_j)}f_j.$$
Now, $ad_{f_0}|_V=ad_{e_0}|_V$, since $V$ is abelian. According to Corollary~\ref{C:EAequation}, a trajectory $x$, $x=x_0f_0+x_V$ of the  Euler-Arnold vector field  must satisfy the following system of ODE's
\begin{equation}\label{eq:nondeg-f}
    \dot{x}_0(t)=-\frac{g(x_V,x_V)}{g(f_0,f_0)}= x_0^2(t)-\frac{g(x,x)}{g(f_0,f_0)},\quad 
    \dot{x}_V(t)=x_0(t)x_V(t).
\end{equation}
From Proposition \ref{P:q-v} with the choice of $g$ and $v=\frac{f_0}{g(f_0,f_0)}$ we get that the Euler-Arnold vector field is $i\bR$-complete iff $g|_V$ is definite, but indefinite on $\mathfrak{a}^+$.

Suppose $g|_V$ is degenerate. Then it has rank $n-1$ with a single lightlike direction spanned by $f_1\in V$. We can assume that $g(f_1,e_0)=1$. Choose a direct summand $U$ to $\langle f_1\rangle$ in $V$ and an orthogonal basis $f_2, \dots,f_n$ in $U$. With these choices there is a unique vector  $f_0$ with the properties $g(f_0,f_1)=1$ (or equivalently $ad_{f_0}|_V=Id|_V$), $f_0\perp_g U$ and $g(f_0,f_0)=0$.  It is defined by the formula
$$f_0:=e_0+\left(\sum_{j=2}^{n}\frac{g(e_0,f_j)^2}{2g(f_j,f_j)}-\frac{g(e_0,e_0)}{2}\right)f_1-\sum_{j=2}^{n}\frac{g(e_0,f_j)}{g(f_j,f_j)}f_j.$$
For $x\in\mathfrak{a}^+$, let $x=x_0f_0+x_1f_1+x_U$.  According to Corollary~\ref{C:EAequation}
a trajectory of the Euler-Arnold vector field $\mathcal W$ is a solution of
\begin{align*}
    \dot{x}_0(t)&=x_0^2(t), \\
    \dot{x}_1(t)&=-x_0(t)x_1(t)-g(x_U(t),x_U(t))=x_0(t)x_1(t)-g(x(t),x(t)),\\
    \dot{x}_U(t)&=x_0(t)x_U(t).
\end{align*}
With the choice of $v=f_1$,  $\mathcal W$ thus takes the form required by 
Proposition \ref{P:q-v}. Since $f_1$ is lightlike,  the Proposition yields that $\mathcal W$  is indeed $i\bR$-complete.
\end{proof}
\begin{cor} For left-invariant pseudo-Riemannian metrics
    $i\bR$-completeness of the Euler-Arnold vector field is a strictly stronger condition  than $\mathcal{P}$-completeness. 
\end{cor}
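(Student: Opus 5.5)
We read the corollary as saying two things. First, $i\bR$-completeness of the Euler-Arnold vector field implies $\mathcal P$-completeness; this is Theorem~\ref{T:nabla-i-complete}. Second, the converse fails. So the plan is to exhibit one left invariant pseudo-Riemannian metric that is $\mathcal P$-complete but whose Euler-Arnold vector field is not $i\bR$-complete. We look for it on an $\mathcal F$-type Lie group, where both properties have already been characterized.

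The construction goes as follows. Take $G=\mathcal A^+(n,\bR)$ with $n\ge1$. By Theorem~\ref{T:type F P-complete}, every left invariant pseudo-Riemannian metric $g$ on $G$ is $\mathcal P$-complete. By Theorem~\ref{T:type F i-complete}, the Euler-Arnold vector field of $g$ fails to be $i\bR$-complete exactly when $g|_V$ is nondegenerate and we are not in the case "$g|_V$ definite, $g$ indefinite on $\mathfrak a^+$". So it suffices to find a $g$ that is either definite on all of $\mathfrak g$, or nondegenerate and indefinite on $V=\mathfrak g'$.

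The simplest choice is a left invariant Riemannian metric, for instance the one making $e_0,e_1,\dots,e_n$ orthonormal. It is definite on $V$, so it is nondegenerate there, and it is definite on $\mathfrak a^+$. Hence its Euler-Arnold field is not $i\bR$-complete, while the metric is $\mathcal P$-complete. For a non-Riemannian example one can take $n\ge2$ and $g$ with $g(e_1,e_1)=1$, $g(e_2,e_2)=-1$ on an orthogonal basis; this is indefinite and nondegenerate on $V$. Either example finishes the proof.

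The only point that needs care is checking that the chosen $g$ falls outside both cases of Theorem~\ref{T:type F i-complete}. Both cases are explicit conditions on the restriction $g|_V$, so this is immediate. As a sanity check for the Riemannian example, the system \eqref{eq:nondeg-f} reads $\dot x_0=x_0^2-g(x,x)/g(f_0,f_0)$ with a positive constant term. Along $i\bR$ this becomes a Riccati equation with solutions of tangent type, whose poles occur on $i\bR$, so the failure of $i\bR$-completeness is consistent.
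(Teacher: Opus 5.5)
Your proposal is correct and is essentially the paper's proof, which likewise combines Theorem~\ref{T:type F P-complete} (every left invariant metric on an $\mathcal F$-type group is $\mathcal P$-complete) with Theorem~\ref{T:type F i-complete} (for instance, Riemannian metrics fall outside both of its $i\bR$-complete cases), while Theorem~\ref{T:nabla-i-complete} supplies the implication. The only slip is in your optional sanity check: for a Riemannian metric the constant term $-g(x,x)/g(f_0,f_0)$ in \eqref{eq:nondeg-f} is negative, not positive, which is the $\tanh$ case of Proposition~\ref{P:t^2+a}(a) with poles on $i\bR$ exactly for trajectories with $x_0(0)=0$, $x\neq 0$; a positive constant would instead give poles on the real axis and hence $i\bR$-completeness.
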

\begin{proof} This follows from Theorem~\ref{T:type F P-complete} and Theorem ~\ref{T:type F i-complete}.
\end{proof}
\section{\texorpdfstring{$2$}2-step nilpotent Lie groups}
\subsection{Biinvariant connections}\label{Ss:biinv2nilp}
\begin{theorem}   
    Let $G$ be a real 2-step nilpotent Lie group and let $\mathfrak{g}=W\oplus V$, where $W$ is a linear subspace and $V$ is the center of the Lie algebra. Let $l\in W^*$ and $\Omega:W\times W\rightarrow V$ a symmetric bilinear map. The following formula defines a  biinvariant, torsion-free,
    $\mathcal P$-complete connection. 
    \begin{equation}\label{E:2-step biinv}
        \nabla_{x}y=\frac{1}{2}[x,y]+l(x_w)y+l(y_w)x+\Omega(x_w,y_w.
    \end{equation}
   $\nabla$ is flat when $l=0$, but it is not even locally symmetric otherwise.  
\end{theorem}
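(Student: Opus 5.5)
The plan is to verify three things in turn: torsion-freeness and biinvariance by direct algebra, $\mathcal P$-completeness through Theorem~\ref{T:nabla-i-complete}, and the curvature statements through the identity \eqref{E:biinv-curvature}.

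\textbf{Torsion-freeness and biinvariance.} Write $\nabla_xy=\frac12[x,y]+\beta(x,y)$ with
$$\beta(x,y)=l(x_w)y+l(y_w)x+\Omega(x_w,y_w).$$
This $\beta$ is symmetric, so $\nabla$ is torsion free. For biinvariance I check \eqref{E:biinv-con} directly, using three facts. First, $[\mathfrak g,\mathfrak g]\subset V$ and $V$ is central, so every double bracket vanishes. Second, every bracket has zero $W$-component, so $l([x,y]_w)=0$ and $\Omega([x,y]_w,\cdot)=0$. Third, $\Omega$ takes values in the center, so $[x,\Omega(\cdot,\cdot)]=0$. With these, the left-hand side of \eqref{E:biinv-con} becomes
$$[x,\nabla_yz]=l(y_w)[x,z]+l(z_w)[x,y].$$
On the right-hand side, $\nabla_{[x,y]}z=l(z_w)[x,y]$ and $\nabla_y[x,z]=l(y_w)[x,z]$. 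The two sides agree, so $\nabla$ is biinvariant.

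\textbf{$\mathcal P$-completeness.} By Theorem~\ref{T:nabla-i-complete} (or Corollary~\ref{c:ircomplete}) it suffices to show that the Euler-Arnold field $x\mapsto-\beta(x,x)=-2l(x_w)x-\Omega(x_w,x_w)$ is $i\bR$-complete. The system splits into
$$\dot x_w=-2l(x_w)x_w,\qquad \dot x_v=-2l(x_w)x_v-\Omega(x_w,x_w).$$
Put $s=l(x_w)$. Then $\dot s=-2s^2$, so
$$s(t)=\frac{s_0}{1+2s_0t},\qquad x_w(t)=\frac{x_w(0)}{1+2s_0t}.$$
The only singularity is at the real point $t=-1/(2s_0)$, and only when $s_0\neq0$. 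For the $V$-component I use the integrating factor $1+2s_0t$:
$$\frac{d}{dt}\bigl((1+2s_0t)x_v\bigr)=-\frac{\Omega(x_w(0),x_w(0))}{1+2s_0t}.$$
This integrates to a logarithm of $1+2s_0t$ (or to a polynomial if $s_0=0$). On a thin simply connected strip around $i\bR$ that avoids the real pole, every coordinate is holomorphic. Hence the field is $i\bR$-complete. This is presumably the content of Proposition~\ref{P:l-L-Omega}, which I would invoke if it fits.

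\textbf{Curvature.} Since $\nabla$ is biinvariant and torsion free, \eqref{E:biinv-curvature} gives $R(x,y)z=\nabla_{\nabla_yz}x-\nabla_y\nabla_zx$. When $l=0$, every $\nabla_yz$ lies in $V$. Moreover $\nabla_u x=0$ and $\nabla_x u=0$ for $u\in V$, because $[u,\cdot]=0$ and $u_w=0$. Both terms therefore vanish, so $\nabla$ is flat.

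When $l\neq0$, I will show $\nabla R\neq0$ by exhibiting one nonzero component. Choose $e\in W$ with $l(e)=1$. For $x,y,z\in W$ and $u\in V$ I compute $R$ explicitly, expecting terms such as $R(u,e)e$ to be nonzero multiples of $u$ coming from the $l(\cdot)\,\cdot$ part. Then I evaluate $(\nabla_eR)(u,e,e)$ using the analogue of \eqref{E:covariant curvature}. The main obstacle is this bookkeeping: choosing test vectors (possibly with $[\,,\,]$ or $\Omega$ contributions, or simply all in $\langle e\rangle\oplus V$) so that the $\Omega$- and bracket-terms cancel cleanly and a visibly nonzero multiple of $l$ remains. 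I would try the subspace $\langle e\rangle\oplus V$ first. On it the connection looks like the $\mathcal F$-type formula of Theorem~\ref{T:F-biinv-conn} with $a$ replaced by $2$-type coefficients, so Proposition~\ref{P:F-biinvariant-curvature}-style computations should give a nonzero $\nabla R$.
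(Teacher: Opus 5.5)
Your proposal follows the paper's proof step by step: the same direct verification of \eqref{E:biinv-con}, the same split Euler--Arnold system fed into Theorem~\ref{T:nabla-i-complete} (you integrate it explicitly, where the paper cites Proposition~\ref{P:l-L-Omega}(a),(b)), and curvature via \eqref{E:biinv-curvature}. The bookkeeping you left open closes with the test vectors you proposed: \eqref{E:biinv-curvature} gives $R(x,y)z=l(z_w)\bigl(l(y_w)x-l(x_w)y\bigr)$, hence $R(u,e)e=u$ and $(\nabla_eR)(u,e,e)=u-u-2u-2u=-4u\neq0$ for any $0\neq u\in V$; here $V\neq0$ because $l\neq0$ forces $W\neq0$, so $\mathfrak g$ is non-abelian and $0\neq[\mathfrak g,\mathfrak g]\subset V$, and the $\mathcal F$-type analogy is not needed.
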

\begin{proof}
   Let  $x,y,z\in\mathfrak{g}$, then \eqref{E:2-step biinv} implies
    \begin{equation}\label{prf 2-step biinv1}
        [x,\nabla_{y}z]=l(y_w)[x,z]+l(z_w)[x,y]
    \end{equation}
    and 
    \begin{equation}\label{prf 2-step biinv2}
        \nabla_{[x,y]}z=l(z_w)[x,y],\qquad
            \nabla_{y}[x,z]=l(y_w)[x,z].
    \end{equation}
    Equations \eqref{prf 2-step biinv1} and \eqref{prf 2-step biinv2} show that formula \eqref{E:biinv-con} holds, proving the biinvariance of the connection. 
    $\nabla$ is  torsion free, since the antisymmetric part of $\alpha(x,y)=\nabla_xy$
    is $[x,y]/2$.
    Now by Proposition~\ref{P:leftinvconn} the integral curves of the  Euler-Arnold vector field $\mathcal W$ must satisfy $\dot x=-\nabla_xx$,  i.e. with $x=x_w+x_v$, the following system of equations 
    \begin{align}
        \dot{x}_w&=-2l(x_w)x_w\label{E:W} \\    
        \dot{x}_v&=-2l(x_w)x_v-\Omega(x_w,x_w)\label{E:V}
    \end{align}
    Proposition~\ref{P:l-L-Omega} (a) and (b) implies that  the system \eqref{E:W} and \eqref{E:V} i.e.  $\mathcal W$  is $i\bR$-complete. 
    Hence by Theorem \ref{T:nabla-i-complete}    the connection is $\mathcal P$-complete.
     Routine calculation, using the fact that $G$ is 2-step nilpotent together with formula \eqref{E:2-step biinv}
    gives the formula of the curvature tensor 
 \begin{equation}
    R(x,y)z=l(z_w)(l(y_w)x-l(x_w)y)
\end{equation}   
and its covariant derivative
\begin{equation}
    (\nabla_wR)(x,y,z)=4l(w_w)l(z_w)(l(x_w)y-l(y_w)x),
\end{equation}
finishing the proof.
\end{proof}
When the center is one dimensional, we can show that all biinvariant connections have the form 
\eqref{E:2-step biinv}. For this purpose we take the standard realizations of such groups.
\begin{definition}
    Consider the following subgroup of $GL(n+2,\bR)$
  $$  \mathcal H(n,\mathbb R)=\left\{\left[\begin{matrix}
 1 & a^T & c\\
 0 & I_{n} & b\\
 0 & 0 & 1
 \end{matrix}\right]\mid c\in\mathbb R, a,b\in\mathbb R^n \right\}.$$
$\mathcal{H}(n,\bR)$ is a 2-step nilpotent Lie group of dimension $2n+1$. Call its Lie algebra $\mathfrak{h}$. Let us denote by $W$ the span of $e_{1,k}$ and $e_{k,n+2}$ for $k=2,\dots n+1$, 
$v:=e_{1,n+2}$
and  $V:=\langle v\rangle$. Then $\mathfrak h=W\oplus V$ and $V$ is the center as well as the derived subalgebra of $\mathfrak{h}$. There is 
a symplectic structure
on $W$ defined by 
$$\omega=\sum_{k=2}^{n+1} e^*_{1,k}\wedge e^*_{k,n+2}.$$
Using $\omega$ for $x=x_w+x_vv$ and $y=y_w+y_vv$, where $x_w,y_w\in W$
\begin{equation}\label{E:H-Lie}
    [x,y]=\omega(x_w,y_w)v.
\end{equation}
\end{definition}
\begin{theorem}\label{T:Heisenberg-biinvariant}
    Let $\nabla$ be a biinvariant, torsion-free Koszul connection on $T\mathcal{H}(n,\bR)$. Then $\nabla$ is of the form
    \eqref{E:2-step biinv}. In particular it is
    $\mathcal P$-complete.
\end{theorem}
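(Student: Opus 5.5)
We first reduce the biinvariance condition to an equivariance condition on the symmetric part. Write $\nabla_xy=\frac12[x,y]+\beta(x,y)$, where $\beta$ is symmetric bilinear; this is possible since $\nabla$ is torsion free. The term $\frac12[x,y]$ satisfies \eqref{E:biinv-con} by the Jacobi identity. Hence \eqref{E:biinv-con} for $\nabla$ is equivalent to
$$[x,\beta(y,z)]=\beta([x,y],z)+\beta(y,[x,z]),\qquad x,y,z\in\mathfrak h,$$
that is, $\beta$ is $ad$-equivariant. We then exploit \eqref{E:H-Lie}. The left side always lies in $V$ and equals $\omega(x_w,\beta(y,z)_w)v$. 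Moreover $[x,\cdot]$ kills $v$, and $\omega$ is nondegenerate on $W$.

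The plan is to substitute special arguments in the following order.

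First, $y=z=v$ gives $\omega(x_w,\beta(v,v)_w)=0$ for all $x$. Hence $\beta(v,v)=cv$ for some $c\in\bR$.

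Next, take $z=v$ and $y$ arbitrary, and write $\beta(y,v)=L(y)$. The identity becomes $\omega(x_w,L(y)_w)=c\,\omega(x_w,y_w)$. By nondegeneracy, $L(y)_w=c\,y_w$, so $\beta(y,v)=c\,y_w+m(y)v$ for some linear functional $m$ on $\mathfrak h$.

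Then take $x,y,z\in W$. The identity reads
$$\omega(x,\beta(y,z)_w)v=\omega(x,y)\bigl(c\,z+m(z)v\bigr)+\omega(x,z)\bigl(c\,y+m(y)v\bigr).$$
Its $W$-component must vanish. Choosing $y=z$ and $x$ with $\omega(x,y)=1$ (possible since $n\ge1$) forces $2cy=0$, so $c=0$. Comparing $V$-components and using nondegeneracy of $\omega$ again gives
$$\beta(y,z)_w=m(z)y+m(y)z,\qquad y,z\in W.$$
Also $\beta(v,v)=m(v)v$ and $c=0$ yield $m(v)=0$.

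Finally, set $l:=m|_W$ and $\Omega(x_w,y_w):=\beta(x_w,y_w)_v$. Expanding $\beta(x,y)$ by bilinearity in the $W$ and $V$ parts reproduces exactly $l(x_w)y+l(y_w)x+\Omega(x_w,y_w)$:
\begin{itemize}
\item the $W\times W$ terms give $l(x_w)y_w+l(y_w)x_w+\Omega(x_w,y_w)$;
\item the mixed terms give $l(x_w)y_vv+l(y_w)x_vv$;
\item the $V\times V$ term vanishes.
\end{itemize}
Symmetry of $\Omega$ is inherited from $\beta$. So $\nabla$ has the form \eqref{E:2-step biinv}, and $\mathcal P$-completeness follows from the preceding theorem.

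The main obstacle is mild bookkeeping: showing $c=0$, which needs the $W$-component argument with $n\ge1$, and tracking the $v$-components so that the mixed terms match $l(x_w)y+l(y_w)x$.
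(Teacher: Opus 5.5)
Your proof is correct and follows essentially the paper's route: you substitute special arguments into \eqref{E:biinv-con} combined with \eqref{E:H-Lie}, and use nondegeneracy of $\omega$ to show that $\nabla_yv=\nabla_vy$ is a multiple of $v$, that $\nabla_vv=0$, and to pin down the $W$-component of the symmetric part. The differences are only organizational. You first subtract $\frac12[x,y]$ and work with the $ad$-equivariant symmetric part $\beta$, obtaining $\beta$ on $W\times W$ directly as a bilinear map, so that $\Omega(x_w,y_w)=\beta(x_w,y_w)_v\,v$ comes out with the right normalization. The paper instead works with $\nabla$ itself, gets $\nabla_yv+\nabla_vy\in\langle v\rangle$ at once from the substitution $y=z$ (rather than introducing and then eliminating your constant $c$), and recovers the bilinear form from $\nabla_xx$ by polarization.
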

\begin{proof}
    Using \eqref{E:biinv-con} and \eqref{E:H-Lie} we obtain the identity
    \begin{equation}\label{eq:23}
        \omega(x_w,(\nabla_y z)_w)v=\omega(x_w,y_w)\nabla_vz+\omega(x_w,z_w)\nabla_yv,
    \end{equation}
    for $x,y,z\in\mathfrak{h}$. Substituting $y=z$  into \eqref{eq:23} we can see that 
    $$\omega(x_w,(\nabla_y y)_w)v=\omega(x_w,y_w)(\nabla_vy+\nabla_yv).$$
    Therefore there must exist an $L\in \mathfrak{h}^*$ such that $\nabla_vy+\nabla_yv=2L(y)v.$ Using the facts that $\nabla$ is torsion-free and $v$ is in the center, we obtain  $\nabla_yv-\nabla_vy=[y,v]=0,$. Hence
    \begin{equation}\label{E:H-l}
        \nabla_y v=\nabla_v y= L(y)v.
    \end{equation}
    Equation \eqref{E:H-l} and \eqref{eq:23}  then yields
    \begin{equation}\label{eq:24}
        \omega(x_w,(\nabla_y y)_w)v=\omega(x_w,y_w)2L(y)v=\omega(x_w,2L(y)y_w)v.
    \end{equation}
     By the nondegeneracy of $\omega$, this implies that
    \begin{equation}\label{E:H-l-w}
        (\nabla_y y)_w=2L(y)y_w.
    \end{equation}
    On the other hand, if we substitute $z=v$ into equation \eqref{eq:23} we obtain
    \begin{equation}
        \omega(x_w,(\nabla_y v)_w)v=\omega(x_w,y_w)\nabla_v v.
    \end{equation}
    By equation \eqref{E:H-l} the LHS here is zero, and $\omega$ is nondegenerate so $\nabla_vv=0$ as well. Combining this equality with equation \eqref{E:H-l} we get  
    \begin{equation}\label{E:H-l-v}
        L(v)v=\nabla_vv=0. 
    \end{equation}
    In summary, we obtain the following identity using equations \eqref{E:H-l-v}, \eqref{E:H-l-w} and \eqref{E:H-l}:
    \begin{align*}
        \nabla_xx&=\nabla_{x_w+x_vv}(x_w+x_vv)\\ &=\nabla_{x_w}x_w+x_v(\nabla_{x_w}v+\nabla_vx_w)+x_v^2\nabla_vv\\
        &=2L(x_w)x_w+(\nabla_{x_w}x_w)_vv+2L(x_w)x_vv\\
        &=2L(x_w)x+(\nabla_{x_w}x_w)_vv
    \end{align*}
    Since by our assumption $\nabla$ is torsion-free, this yields that the connection takes the form of \eqref{E:2-step biinv} with $l:=\left.L\right|_W$, and 
    $\Omega(u,z):=(\nabla_uz+\nabla_zu)_vv$, where $u,z\in W$.
\end{proof}
\subsection{Left invariant metrics}
\begin{theorem}\label{T:2step}
 Let $H$ be a complex 2-step nilpotent Lie group and $q$ a left in\-va\-ri\-ant holomorphic Riemannian metric on $T^{1,0}H$. Then $q$ is $\bC$-complete.   
\end{theorem}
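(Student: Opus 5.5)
By Corollary~\ref{C:Ccomplete}(b), it suffices to show that the holomorphic Euler-Arnold vector field of the holomorphic Levi-Civita connection $\mathfrak D$ of $q$ is $\bC$-complete. By Proposition~\ref{P:CEA}(b) its trajectories solve
\[
f'=ad^*_f f,
\]
where $*$ is the adjoint with respect to the nondegenerate complex bilinear form $q_e$ on $\mathfrak h^{1,0}$. So the plan is to show that every solution of this holomorphic ODE extends to an entire map $\bC\to\mathfrak h^{1,0}$. The key steps are a decomposition adapted to the center and the derived algebra, and then solving the resulting system explicitly or via linear ODE theory.

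Write $\mathfrak z$ for the center and $\mathfrak h'=[\mathfrak h,\mathfrak h]\subset\mathfrak z$. For any $z\in\mathfrak z$ and any $w$ we have $q(ad^*_f f,z)=q(f,[f,z])=0$, so $ad^*_f f\in\mathfrak z^{\perp}$; similarly $q(ad^*_ff,w)=q(f,[f,w])$ depends only on the $\mathfrak h'$-component data of $f$.

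The main step uses the quantity $q(f,u)$ for $u\in\mathfrak h'$. Its derivative is
\[
\frac{d}{d\zeta}q(f,u)=q(f,[f,u])=0,
\]
since $[f,u]=0$ for $u\in\mathfrak h'\subset\mathfrak z$. Hence the linear functional $\lambda:=q(f,\cdot)|_{\mathfrak h'}$ is a constant of motion, equal to $\lambda_0$. Now $q(ad^*_ff,w)=q(f,[f,w])=\lambda_0([f,w])$, so
\[
ad^*_ff=A f \qquad\text{for all } f \text{ with } \lambda=\lambda_0,
\]
where $A$ is the linear map defined by $q(Ax,w)=\lambda_0([x,w])$; that is, $A=ad^*$ of a fixed element. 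Concretely, let $u_0\in\mathfrak h'$-dual to $\lambda_0$ be chosen with $q(u_0,\cdot)=\lambda_0$ on $\mathfrak h'$. Then $A x=-ad_x^*$-type map given by $q(Ax,w)=\lambda_0([x,w])$, which is linear in $x$.

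Thus on each level set the ODE becomes the linear constant-coefficient equation $f'=Af$, whose solution $f(\zeta)=e^{\zeta A}f(0)$ is entire. One must check consistency: the solution of the linear system keeps $\lambda$ equal to $\lambda_0$. By uniqueness this follows, because the true solution satisfies the linear equation on its domain and therefore coincides with the entire exponential solution. This also gives the extension along any path from $0$ in $\bC$.

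The main obstacle is the degeneracy subtlety. $q_e$ restricted to $\mathfrak h'$ may be degenerate, so $\lambda_0$ need not be represented by a vector in $\mathfrak h'$. This is avoided by working only with the functional $\lambda_0$ and defining $A$ through $q(Ax,w)=\lambda_0([x,w])$. Such an $A$ exists because $q$ is nondegenerate on all of $\mathfrak h^{1,0}$ and $w\mapsto\lambda_0([x,w])$ is a linear functional. Once $A$ is defined this way, Corollary~\ref{C:Ccomplete}(b) finishes the proof.
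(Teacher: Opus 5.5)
Your argument is correct and is essentially the paper's proof in invariant form. The paper passes to $z=Qx$, whose first block of coordinates $q(e_j,x)$, $e_j\in\mathfrak h'$, is exactly your conserved functional $\lambda$, and Proposition~\ref{P:complete} then reduces the remaining equations to a linear constant-coefficient system, just as your reduction to $f'=Af$ does. Two small remarks: the degeneracy issue is vacuous, since $Ax=ad^*_x f(0)$ works; and $e^{\zeta A}f(0)$ solves the nonlinear equation on all of $\bC$ (not just near $0$) because $\frac{d}{d\zeta}q(e^{\zeta A}f(0),u)=\lambda_0([e^{\zeta A}f(0),u])=0$ for $u\in\mathfrak h'$, or equally by the identity theorem.
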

The theorem fails already for 3-step nilpotent Lie groups. Guediri \cite{G94} exhibited an example   of an incomplete pseudo-Riemannian metric on a certain 3-step nilpotent Lie group. The complexification of that metric then will not be $\bC$-complete on the complexified group. Guediri also proved in the same paper 
the real version of the theorem i.e. that  every left invariant pseudo-Riemannian metric on a real, 2-step nilpotent Lie group is
$\bR$-complete.  
\begin{proof}
Let  $\mathfrak h$ be the Lie algebra of $H$. In light of Corollary~\ref{C:Ccomplete} a), we need to show that the 
\begin{equation}\label{E:EA}
\dot x=(ad_x)^*x
\end{equation}
Euler-Arnold ODE system is $\bC$ complete. 
Let $\mathfrak h'$ be the derived algebra and $V$  a vector subspace in $\mathfrak h$ so that $\mathfrak h=\mathfrak h'\oplus V$. Let $e_1,\dots,e_n$ be a $\bC$ basis in 
$\mathfrak h'$ and $e_{n+1},\dots, e_{n+k}$ a $\bC$ basis in $V$. For $x\in\mathfrak h$ denote by $A_x$ the matrix of $ad_x$  and by
$Q$  the nondegenerate symmetric matrix of $q$ on $\mathfrak h$ in this bases i.e. $Q=(q_{jk})$, where $q_{jk}=q(e_j,e_k)$.
Then $Q^{-1}A_x^TQ$ will be the matrix of $ad_x^*$.
Let
 $z:=Qx$, where now $x=\sum x_je_j\in\mathfrak h$ is identified with $(x_1,\dots,x_{n+k})\in\bC^{n+k}=\bC^n\times\bC^k$.  We get that the system
\eqref{E:EA} is equivalent to
\begin{equation}\label{E:EA2}
 \dot z=A^T_{Q^{-1}z}z, \qquad z\in\bC^{n+k}.   
\end{equation}
It is enough to show that this system is $\bC$ complete. 
  Since $\mathfrak h$ is 2-step nilpotent, for any $x\in \mathfrak h$,  and $j=1,\dots, n$ we have $[x,e_j]\in[\mathfrak h, \mathfrak h']=0$. Also $[x,e_{n+s}]\in\mathfrak h'$, $s=1,\dots,k$. Thus the matrix $A_x$ of $ad_x$  has the following  form 
$$
A_x=\left(\begin{matrix}
    0 & \alpha_x\\ 0&0
\end{matrix}\right),
$$
where $\alpha_x$ is an $n\times k$ matrix whose elements depend linearly on $x$.
Thus 
\begin{equation}\label{E:transpose}
A^T_x=\left(\begin{matrix}
    0 & 0\\ \alpha^T_x&0
\end{matrix}\right).
\end{equation}
Let $\varphi(z,w)=\frac1{2}(A^T_{Q^{-1}z}w+A^T_{Q^{-1}w}z)$, $z,w\in\bC^{n+k}$. Then $\varphi$ is symmetric and bilinear.  \eqref{E:transpose} yields that  $\varphi:\bC^{n+k} \times\bC^{n+k}\ra 0\times\bC^k$  
and $\varphi(z,w)=0$ if $z, w\in 0\times\bC^k$. Now Proposition~\ref{P:complete} shows that the vector field corresponding to $P(z)=\varphi(z,z)$ is $\bC$-complete,
 yielding that the system $\eqref{E:EA2}$ is $\bC$ complete.
\end{proof}
\begin{theorem}\label{T:2stepcomplete}
     Let $G$ be  a real 2-step nilpotent Lie group and $g$ a  left invariant pseudo-Riemann metric. Then $g$  is $\mathcal P$-complete.    
\end{theorem}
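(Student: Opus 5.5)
We derive the theorem from Theorem~\ref{T:2step} via Theorem~\ref{T:nabla-i-complete}. By the remark after Theorem~\ref{T:nabla-i-complete} we may assume $G$ connected (otherwise work with the unit component $G_0$, which is left translation equivalent to every other component). It then suffices to show that the Euler-Arnold vector field $\mathcal W(x)=ad^*_xx$ of the Levi-Civita connection of $g$ on $\mathfrak g$ is $i\bR$-complete. In fact we will show it is $\bC$-complete.

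First complexify the data. Let $\mathfrak h=\mathfrak g\otimes\bC$, which is a complex 2-step nilpotent Lie algebra since $[\mathfrak h,[\mathfrak h,\mathfrak h]]$ is the complexification of $[\mathfrak g,[\mathfrak g,\mathfrak g]]=0$. Let $H$ be the simply connected complex Lie group with Lie algebra $\mathfrak h$, and let $q_e$ be the complex bilinear extension of $g_e$. The form $q_e$ is nondegenerate and symmetric because $g_e$ is, so it defines a left invariant holomorphic Riemannian metric $q$ on $T^{1,0}H$ (identifying $\mathfrak h$ with $\mathfrak h^{1,0}$ via $x\mapsto x^{1,0}$). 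The adjoint of $ad_x$ with respect to $q_e$ is the complex linear extension of $ad^*_x$ taken with respect to $g_e$; this is checked on a real basis, where the matrix $Q^{-1}A_x^TQ$ has real entries for real $x$. Hence the holomorphic Euler-Arnold field $z\mapsto ad^*_zz$ of $q$ (Proposition~\ref{P:CEA}(b)) is the holomorphic polynomial extension of $\mathcal W$.

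By Theorem~\ref{T:2step}, $q$ is $\bC$-complete, so by Corollary~\ref{C:Ccomplete}(b) the holomorphic Euler-Arnold field is $\bC$-complete. (In fact the proof of Theorem~\ref{T:2step} establishes exactly the $\bC$-completeness of the ODE system \eqref{E:EA}.) Take a trajectory $x(t)$ of $\mathcal W$ with $x(0)=x_0\in\mathfrak g$, and let $z$ be the entire holomorphic trajectory with $z(0)=x_0$. The restriction $z|_{\bR}$ satisfies the same polynomial ODE with real coefficients, and its conjugate $\overline{z(\bar t)}$ is also a solution with the same real initial value. By uniqueness $z|_{\bR}$ is real-valued and coincides with $x(t)$. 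So every trajectory of $\mathcal W$ extends holomorphically to $\bC$, and in particular to a neighborhood of $i\bR$.

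Theorem~\ref{T:nabla-i-complete} then gives that the Levi-Civita connection of $g$ is $\mathcal P$-complete, with ac-polarization $(\psi_\eta)^{-1}_*(T^{1,0}G_\bC)$ on $TG$. The only delicate point is the identification of the complexified adjoint with the $q$-adjoint, and this is immediate from the matrix formula.
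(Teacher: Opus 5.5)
Your proposal is correct and follows essentially the same route as the paper. Like the paper, you complexify $g$ to a holomorphic metric $q$ on $\mathfrak g\otimes\bC$, use (the proof of) Theorem~\ref{T:2step} to get $\bC$-completeness of $\dot x=ad^*_xx$, and conclude with Theorem~\ref{T:nabla-i-complete}; your extra checks (the $q$-adjoint as the complexified $g$-adjoint, real trajectories as restrictions of entire ones, and the reduction to connected $G$) only fill in details the paper leaves implicit.
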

\begin{proof} Let  $\mathfrak h$ be the complexification of $\mathfrak g$ and denote by $q:\mathfrak h\times\mathfrak h\ra\bC$ the complex bilinear extension of $\left.g\right|_\mathfrak g$. The proof of Theorem~\ref{T:2step} shows that the system
$$
\dot x=ad^*_xx
$$
is $\bC$-complete on $\mathfrak h$, where now
$ad^*_x$ is defined w.r.t. $q$. In particular the Euler-Arnold vector field of $g$ is $\bC$-complete. 
   Then  $\mathcal P$-completeness follows from Theorem~\ref{T:nabla-i-complete}.   
\end{proof}
Since every generalized Heisenberg Lie algebra is 2-step nilpotent, Theorem~\ref{T:2stepcomplete} implies that the generalized Heisenberg groups with the left invariant metrics studied in \cite{HI03} are $\mathcal P$-complete.
\subsection{The 3-dimensional Heisenberg group}
Let $H_3$ be the 3-dimensional Heisenberg group.
It is well known, that up to isometry there is only one left invariant Riemannian metric on $H_3$. This metric was investigated  by Halverscheid and Iannuzzi in
 \cite{HI03}. They showed  that this metric is not entire, but there is a maximal domain $\Omega\subset TH_3$, where the ac-structure can be defined. $\Omega$ has  a complicated real analytic boundary, it is neither holomorphically separable nor holomorphically convex, its envelope of holomorphy is biholomorphic to $\bC^3$.   
 The left invariant Lorentz metrics on $H_3$ were classified in \cite{R92}, \cite{RR06}. Up to isometry there are three kind. Two come as a 1-parameter family, the third is flat, hence its
 metric is entire. It is not known, what are the maximal domain of definition (if they exists)
 of the first two families of Lorentz metrics.
  Nevertheless Theorem \ref{T:2stepcomplete} yields, that all  left invariant metrics on $H_3$ are $\mathcal P$-complete.
\section{Lie groups whose Lie algebra admits a reductive decomposition}
\subsection{Deformations of the canonical connection}
\begin{definition} Let $\mathfrak g$ be a Lie algebra over the field $\mathbb K$
(where $\mathbb K=\bR$ or $\mathbb K=\bC$) and $\mathfrak k\subset\mathfrak g$ a Lie subalgebra over $\mathbb K$. We shall say that $(\mathfrak g,\mathfrak k)$ is a {\it reductive pair} over $\mathbb K$ if there exists a $\mathbb K$-linear subspace $\mathfrak p$ in $\mathfrak g$ with $\mathfrak g=\mathfrak k\oplus \mathfrak p$ and $[\mathfrak k, \mathfrak p]\subset\mathfrak p$. We call $\mathfrak g=\mathfrak k\oplus \mathfrak p$  a {\it reductive decomposition} of $\mathfrak g$.
\begin{examples} Let $\mathfrak g$ be a real Lie algebra and $\theta:\mathfrak g\ra \mathfrak g$ an involutive automorphism. Then $\mathfrak k:=\{x\in\mathfrak g \mid \theta x=x\}$, $\mathfrak p:=\{x\in \mathfrak g \mid \theta x=-x\}$ yield a reductive decomposition of $\mathfrak g$, in which case $[\mathfrak p,\mathfrak p]\subset\mathfrak k$ also holds. As a special case let $\mathfrak g$ be  an arbitrary real Lie algebra and  $\mathfrak h:=\mathfrak g\otimes\bC$ considered as a real Lie algebra with involution
$\theta:\mathfrak h\ra\mathfrak h$, $\theta:u\mapsto \overline u$. Then $\mathfrak h=\mathfrak g\otimes 1\oplus \mathfrak g\otimes i$ is a reductive decomposition.   
\end{examples}
Let $G$ be a real  Lie group with Lie algebra $\mathfrak g$ and reductive decomposition   $\mathfrak k\oplus\mathfrak p=\mathfrak g$. Let   $\delta$ be a real number.  
The  {\it $\delta$-deformation} of the canonical connection is
   the left invariant torsion free Koszul connection $\nabla^\delta$   on $TG$ 
defined by
\begin{equation}\label{E:conn}
\nabla^\delta_xy
=\frac1{2}\left([x,y]+\delta\left([x_\mathfrak p,y_\mathfrak k]+[y_\mathfrak p,x_\mathfrak k]\right)\right),
\end{equation}
where  $x=x_\mathfrak k+x_\mathfrak p$, $y=y_\mathfrak k+y_\mathfrak p$ are the decompositions in $\mathfrak k\oplus\mathfrak p=\mathfrak g$.
   
 Similarly let $H$ be a complex Lie group with Lie algebra $\mathfrak h$ and reductive decomposition   $\mathfrak l\oplus\mathfrak m=\mathfrak h$. Let  $\zeta\in \bC$.
 The {\it $\zeta$-deformation} of the holomorphic canonical connection   is
   the left invariant, torsion free, holomorphic  Koszul connection $\mathfrak D^\zeta$ on $T^{1,0}H$ 
 defined by
\begin{equation}\label{E:Cconn}
\mathfrak D^\zeta_uv
=\frac1{2}\left([u,v]+\zeta\left([u_{\mathfrak m^{1,0}},v_{\mathfrak l^{1,0}}]+[v_{\mathfrak m^{1,0}},u_{\mathfrak l^{1,0}}]\right)\right),
\end{equation}
where  $u=u_{\mathfrak l^{1,0}}+u_{\mathfrak m^{1,0}}$, $v=v_{\mathfrak l^{1,0}}+v_{\mathfrak m^{1,0}}$ are the decompositions in $\mathfrak l^{1,0}\oplus\mathfrak m^{1,0}=\mathfrak h^{1,0}$.
\end{definition}
\begin{prop}\label{P:Cheegercompl} 
\
\begin{enumerate}
    \item[a)] Let $G$ be a real Lie group and   $\mathfrak g=\mathfrak k\oplus \mathfrak p$  a reductive decomposition. Then  for any $\delta\in\bR$, the connection $\nabla^\delta$ is $\bR$-complete.
\item[b)] Let $H$ be a complex Lie group with a 
reductive decomposition  $\mathfrak l\oplus\mathfrak m=\mathfrak h$. Then  for any $\zeta\in\bC$ the holomorphic connection $\mathfrak D^\zeta$ is $\bC$-complete.
\end{enumerate}
\end{prop}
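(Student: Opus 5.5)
By Corollary~\ref{C:conncomplete}(b) in the real case and Corollary~\ref{C:Ccomplete}(b) in the complex case, it suffices to show that the Euler-Arnold vector field is complete ($\bR$-complete, resp.\ $\bC$-complete). The two cases are formally identical, so I treat the real one and then indicate the changes.

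From \eqref{E:conn}, for $x=x_\mathfrak k+x_\mathfrak p$ we get $\nabla^\delta_xx=\delta[x_\mathfrak p,x_\mathfrak k]$. The Euler-Arnold equation $\dot x=-\nabla^\delta_xx$ is therefore
$$\dot x=\delta[x_\mathfrak k,x_\mathfrak p].$$
Since $[\mathfrak k,\mathfrak p]\subset\mathfrak p$, the right hand side lies in $\mathfrak p$. Splitting the system into components gives
$$\dot x_\mathfrak k=0,\qquad \dot x_\mathfrak p=\delta\, ad_{x_\mathfrak k}x_\mathfrak p .$$
Hence $x_\mathfrak k(t)\equiv x_\mathfrak k(0)=:k_0$ is constant. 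The second equation then becomes a linear ODE with constant coefficients, with solution
$$x_\mathfrak p(t)=\exp(t\delta\, ad_{k_0})x_\mathfrak p(0),$$
which stays in $\mathfrak p$ because $ad_{k_0}$ preserves $\mathfrak p$. This solution exists for all $t\in\bR$, so the Euler-Arnold field is $\bR$-complete and (a) follows.

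For (b), the same computation applies with \eqref{E:Cconn}, the reductive decomposition $\mathfrak l^{1,0}\oplus\mathfrak m^{1,0}$ (note $[\mathfrak l^{1,0},\mathfrak m^{1,0}]\subset\mathfrak m^{1,0}$, since $[x^{1,0},y^{1,0}]=[x,y]^{1,0}$), and complex time $\zeta$. The holomorphic Euler-Arnold equation gives $u_{\mathfrak l}\equiv l_0$ and
$$u_{\mathfrak m}(\tau)=\exp(\tau\zeta\, ad_{l_0})u_{\mathfrak m}(0),$$
which is entire in $\tau\in\bC$. So $\mathcal E$ is $\bC$-complete, and Corollary~\ref{C:Ccomplete}(b) gives the claim.

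There is no real obstacle here. The only points needing care are checking the sign and factor in $\nabla_xx$ (the $\frac12[x,x]$ term vanishes and the two cross terms combine) and confirming that $ad_{k_0}$ preserves $\mathfrak p$.
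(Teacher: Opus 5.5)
Your proof is correct and is essentially the paper's argument. The paper packages the same splitting ($x_{\mathfrak k}$ constant, then a constant-coefficient linear ODE for $x_{\mathfrak p}$) into the general Proposition~\ref{P:complete}, applied to the symmetric map $\varphi_\delta$ with values in $V=\mathfrak p$ and vanishing on $\mathfrak p\times\mathfrak p$, and then concludes via Corollaries~\ref{C:conncomplete} and~\ref{C:Ccomplete}. Your explicit solution $\exp(t\delta\, ad_{k_0})x_{\mathfrak p}(0)$ is the same one the paper derives later in Theorem~\ref{T:reddef}(a).
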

\begin{proof} a)
    Let $\varphi_\delta:\mathfrak g\times\mathfrak g\ra\mathfrak p$ be defined by
    $\varphi_\delta(x,y)=\frac{\delta}{2}\left([x_\mathfrak p,y_\mathfrak k]+[y_\mathfrak p,x_\mathfrak k]\right)$.
    
    Then $\varphi_\delta$ is symmetric, bilinear,  and  obviously $\left.\varphi_\delta\right|_{\mathfrak p\times\mathfrak p}\equiv0$. 
    It follows from Proposition~\ref{P:complete}  that the  vector field $\mathcal V_\delta$,
    corresponding to the quadratic polynomial map
    $\varphi_\delta(x,x)=\delta[x_\mathfrak p,x_\mathfrak k]$ is $\bR$-complete.
Then     Corollary~\ref{C:conncomplete}  yields the $\bR$-completeness of $\nabla^\delta$.

   b) The proof in the complex case is the same with obvious changes in notation, using 
    Proposition~\ref{P:complete} and Corollary~\ref{C:Ccomplete}.
\end{proof}
\begin{theorem}\label{T:reddef}
 Let $G$ be a real Lie group with a reductive decomposition $\mathfrak g=\mathfrak k+\mathfrak p$,  $x_0=x_{0\mathfrak k}+x_{0\mathfrak p}\in\mathfrak g$, $a\in G$, $v:=(L_a)_*(x_0)$, $\eta:G\ra G_\bC$ the universal complexification  and $\delta\in\bR$. Define 
the left invariant Koszul connection $\nabla^\delta$ by \eqref{E:conn} and the operator $\Lambda:\mathfrak g\ra\mathfrak g$ by $\Lambda x:=\delta x_\mathfrak k+x$.   Then
\begin{enumerate}
    \item[a)]  the unique $\nabla^\delta$-geodesic $\gamma_v$ with $\dot\gamma_v(0)=v$ has the form
 \begin{equation}\label{E:explgeod}
    \gamma_v(t)=a\exp_G(t\Lambda x_0)\exp_G(-t\delta x_{0\mathfrak k}),
\end{equation} 
\item[b)] $\eta\circ\gamma_v$ extends holomorphically to $\bC\ra G_\bC$, the  polar map $\psi_\eta$ is defined on $TG$ and is expressed by the formula
\begin{equation}\label{E:polar}
  \psi_\eta(v)=\eta(a)\exp_{G_\bC}(i\eta_*(\Lambda x_0))\exp_{G_\bC}(-i\delta \eta_*(x_{0\mathfrak k})).   
\end{equation}
 \item[c)] $P:=(\psi_\eta)_*^{-1}(T^{1,0}G_\bC)\subset \bC T(TG)$ defines  a $G$-invariant ac-polarization on $TG$, in particular $\nabla^\delta$ is $\mathcal P$-complete.
 \end{enumerate}
  \end{theorem}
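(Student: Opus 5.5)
By left invariance of $\nabla^\delta$ (left translations map geodesics to geodesics) it suffices to treat $a=e$ and then left translate. For part (a) we use Proposition~\ref{P:leftinvconn}: a curve $\gamma$ is a $\nabla^\delta$-geodesic iff $x(t)=\omega_G(\dot\gamma(t))$ satisfies $\dot x=-\nabla^\delta_xx=-\delta[x_\mathfrak p,x_\mathfrak k]$.

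First we solve this equation explicitly. Put $k:=x_{0\mathfrak k}$ and try $x(t)=\mathrm{Ad}(\exp_G(t\delta k))x_0$. Since $[\mathfrak k,\mathfrak p]\subset\mathfrak p$ and $[k,k]=0$, we get $x_\mathfrak k(t)=k$ and $x_\mathfrak p(t)=\mathrm{Ad}(\exp(t\delta k))x_{0\mathfrak p}$. Differentiating gives
\[
\dot x=\delta[k,x_\mathfrak p]=-\delta[x_\mathfrak p,x_\mathfrak k],
\]
as required. By uniqueness this is the trajectory with $x(0)=x_0$.

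Next we verify (\ref{E:explgeod}). For $\gamma(t)=\exp(t\Lambda x_0)\exp(-t\delta k)$ the left logarithmic derivative is
\[
\omega_G(\dot\gamma)=\mathrm{Ad}(\exp(t\delta k))\Lambda x_0-\delta k .
\]
Since $\Lambda x_0=x_0+\delta k$ and $\mathrm{Ad}(\exp(t\delta k))k=k$, this equals $\mathrm{Ad}(\exp(t\delta k))x_0=x(t)$. Also $\gamma(0)=e$, so Proposition~\ref{P:fundthm} identifies $\gamma$ with the geodesic, which proves (a).

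For (b), $\eta$ is a homomorphism with $\eta\circ\exp_G=\exp_{G_\bC}\circ\eta_*$. Hence
\[
\eta\circ\gamma_v(t)=\eta(a)\exp_{G_\bC}(t\eta_*\Lambda x_0)\exp_{G_\bC}(-t\delta\eta_*x_{0\mathfrak k}).
\]
The map $z\mapsto \exp_{G_\bC}(z\,\eta_*w)$ is entire for every $w$, since $\eta_*w$ sits in the complex Lie algebra and the complex exponential is holomorphic. Replacing $t$ by $z\in\bC$ therefore gives an entire extension of $\eta\circ\gamma_v$. Consequently $N_i(\eta,G_\bC)=TG$, and evaluating the extension at $z=i$ yields (\ref{E:polar}).

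Part (c) then follows from Theorem~\ref{T:eta-polarization}, because $N_i(\eta,G_\bC)=TG$. Alternatively, it follows from Theorem~\ref{T:nabla-i-complete}: the Euler--Arnold trajectory $\mathrm{Ad}(\exp(t\delta k))x_0=e^{t\delta\,\mathrm{ad}_k}x_0$ is entire in $t$, so the Euler--Arnold vector field is $\bC$-complete and in particular $i\bR$-complete.

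The only delicate step is the bookkeeping for $\omega_G(\dot\gamma)$ of a product of two one-parameter groups, i.e. getting the $\mathrm{Ad}$-term on the correct side, together with the observation that $x_\mathfrak k$ stays constant along the trajectory. Everything else is formal.
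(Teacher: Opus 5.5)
Your proof is correct and follows the paper's route. You solve the Euler--Arnold equation to get $x(t)=\mathrm{Ad}(\exp_G(t\delta x_{0\mathfrak k}))x_0$ and then check the Maurer--Cartan derivative of the candidate curve; the paper instead shows that $\gamma(t)\exp_G(t\delta x_{0\mathfrak k})$ is a one-parameter subgroup, which is the same computation run in reverse. Parts (b) and (c) go as in the paper, and citing Theorem~\ref{T:eta-polarization} directly once $N_i(\eta,G_\bC)=TG$ is slightly more direct than the paper's appeal to Theorem~\ref{T:nabla-i-complete}, whose proof reduces to exactly that.
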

\begin{proof} a) 
Since $\nabla^\delta$  is left invariant, we can assume that $a=e$ is the unit element. According to    Proposition~\ref{P:leftinvconn}
the curve
\begin{equation}\label{E:xcurve}
    x(t):=(L_{\gamma(t)})^{-1}_*(\dot\gamma(t))
\end{equation}
must satisfy $\dot x=-\nabla^\delta_xx$, i.e. the system
\begin{align}
\dot x_{\mathfrak k}&=0\\
\dot x_\mathfrak p&=-\delta[x_\mathfrak p,x_\mathfrak k]=\delta[x_\mathfrak k,x_\mathfrak p],
\end{align}
so with $A(t):=t ad_{\delta x_{0\mathfrak k}}:\mathfrak g\ra\mathfrak g$ and $\eta(t):=\exp(t\delta x_{0\mathfrak k})$ we get
\begin{equation}\label{E:++}
 x_\mathfrak k(t)=x_{0\mathfrak k}\quad\text{and}
 \qquad x_\mathfrak p(t)=e^{A(t)}x_{0\mathfrak p}=Ad_{\eta(t)}(x_{0\mathfrak p}), \quad x(t)=
 Ad_{\eta(t)}(x_0).
\end{equation}
Let $\chi(t):=\gamma(t)\eta(t)$. Then from \eqref{E:xcurve} and \eqref{E:++}
\begin{align*}
  \dot\chi(t)&=(R_{\eta(t)})_*\dot\gamma(t)+ 
  (L_{\gamma(t)})_*\dot\eta(t)\\
 &=(R_{\eta(t)})_*(L_{\gamma(t)})_*(x(t))+
 (L_{\gamma(t)})_*(L_{\eta(t)})_*(\delta x_{0\mathfrak k})\\
&=(R_{\eta(t)})_*(L_{\gamma(t)})_*Ad_{\eta(t)}x_0+
 (L_{\chi(t)})_*(\delta x_{0\mathfrak k})\\
 &=(R_{\eta(t)})_*(L_{\gamma(t)})_*(L_{\eta(t)})_*(R_{\eta(t)})^{-1}_*(x_0)+
 (L_{\chi(t)})_*(\delta x_{0\mathfrak k})
=(L_{\chi(t)})_*(\Lambda x_0).
\end{align*}
That means the curve $\chi(t)$ is a $1$-parameter subgroup $\chi(t)=\exp(t\Lambda x(0))$
yielding the claimed form \eqref{E:explgeod} of
$\gamma$.

b) Since $\eta$ is  a homomorhism we get
 \begin{equation}\label{E:etageod}
    \eta\circ\gamma_v(t)=\eta(a)\exp_{G_\bC}(t\eta_*(\Lambda x_0))\exp_{G_\bC}(-t\delta \eta_*(x_{0\mathfrak k})),
\end{equation}
But $\psi_\eta(v)=\eta\circ\gamma_v(i)$, so \eqref{E:etageod} proves part (b). 
Part c) follows from part b) and  Theorem~\ref{T:nabla-i-complete}.
\end{proof}
 \begin{remarks}
Such explicit formulas for geodesics and polar maps were already known in some specific situations i.e. for geodesics of deformations of the biinvariant Riemannian metric of a compact Lie group (\cite{DZ79} and  \cite{BLP24}) or deformations of the Killing form of a real semisimple Lie group \cite{HI06}.
The proof of Theorem~\ref{T:reddef} part a)  is a modified version of
Proposition 6.3 in \cite{BLP24} adjusted to our more general situation.   
     The proof of Proposition~\ref{P:Cheegercompl} also shows that
     the Euler-Arnold vector field of $\nabla^\delta$ is $\bC$-complete (hence $i\bR$-complete) in the complexified Lie algebra,   and then  Theorem~\ref{T:nabla-i-complete} would  yield the $\mathcal P$-completeness of $\nabla^\delta$.  

     The connections $\nabla^\delta$ provide genuinely new $\mathcal P$-complete connections, i.e. they are (mostly) not affine locally symmetric. Indeed the covariant derivative $(\nabla^\delta R^\delta)(x,y,z,w)$ of the curvature tensor $R^\delta(x,y)z$ of $\nabla^\delta$
     can be identified with 
      a  polynomial $P(\delta)=\sum^3_{j=0}a_j\delta^j$ in $\delta$ where $a_j$ are quadrilinear  maps $\mathfrak g\times\mathfrak g\times\mathfrak g\times\mathfrak g\ra \mathfrak g$ and
      $a_0=\nabla R^0$ is the covariant derivative of the curvature of the canonical connection.
    From Proposition~\ref{P:Cartanlocsymm} we know that for a $G$ that is not 2-step solvable, $a_0\not\equiv0$. Hence the kernel
    of the map \eqref{E:locsymornot}, denoted by $V$, is at most 3-dimensional.
    \begin{equation}\label{E:locsymornot}
    \bR^4\ni (\lambda_0,\lambda_1,\lambda_2,\lambda_3)\mapsto \sum^3_{j=0}\lambda_ja_j.
    \end{equation}
    Since for any pairwise different $\delta_1,\dots,\delta_4\in\bR$, the vectors
    $v_{\delta_j}:=(1,\delta_j,\delta^2_j,\delta^3_j)$ form a basis in $\bR^4$, $V$ can contain at most 3 out of such $v_\delta$ vectors. That means: if $G$ is not 2-step solvable,
    there are at most 3 different $\delta$, for which $\nabla^\delta$ is locally symmetric.
 \end{remarks}
 In the next section we shall say more on deformations of (pseudo)-Riemannian metrics.
\subsection{Deformations of biinvariant pseudo-Riemannian metrics}
\begin{definition}\label{D:Cheegerm}
Let $G$ be a real Lie group with Lie algebra $\mathfrak g$ and   biinvariant pseudo-Riemannian metric $\lr$ on $G$. Let $s\in\bR\setminus\{0\}$ and $\mathfrak k\subset\mathfrak g$  a Lie subalgebra so that $\left.\lr\right|_\mathfrak k$ is nondegenerate.  The 
{\it $(s,\mathfrak k)$-deformation} of $\lr$ 
is the left invariant pseudo-Riemannian metric $g_{s,\mathfrak k}$  defined by the formula
\begin{equation}\label{E:Rmetricdefor}
    g_{s,\mathfrak k}(u,v):=\langle u_{\mathfrak k^\perp},v_{\mathfrak k^\perp}\rangle+s\langle u_\mathfrak k, v_\mathfrak k\rangle=\langle u,v\rangle+(s-1)\langle u_\mathfrak k,v_\mathfrak k\rangle\qquad u,v\in\mathfrak g.
\end{equation}
\end{definition}
$g_{1,\mathfrak k}$ is then the original biinvariant  pseudo-Riemannian metric.  
\begin{remark}  
Such (and more general) deformations of Riemannian metrics were 
 first used by  Berger \cite{B61} and Cheeger \cite{C73} and since then it became an important tool in differential geometry to construct nonnegatively curved metrics. Here and in the previous section  such deformed metrics (or connections) are important, because 
their geodesics and the polar map can be  explicitly determined and that these deformed metrics (connections) are all $\mathcal P$-complete.
 \end{remark}
\begin{theorem}\label{T:Cheegerdeformmetriccompl}
 Let $G$ be a real Lie group with Lie algebra $\mathfrak g$ equipped with a biinvariant pseudo-Riemannian metric  $\lr$. Suppose  $\mathfrak k\subset\mathfrak g$ is a Lie subalgebra  so that the restriction of $\lr$ to $\mathfrak k$ is nondegenerate. 
Then 
\begin{enumerate}
    \item[a)] $\mathfrak g=\mathfrak k\oplus\mathfrak k^\perp$ is a reductive decomposition.
\item[b)]
Let $s\in\bR\setminus\{0\}$. Then the
 Levi-Civita connection of the  metric $g_{s,\kappa}$ defined   by  $\eqref{E:Rmetricdefor}$ is  the $\delta$-deformation of the canonical connection of $G$ in the sense of $\eqref{E:conn}$, corresponding to the reductive decomposition  $\mathfrak g=\mathfrak k\oplus\mathfrak k^\perp$ 
and $\delta=s-1$.
\item[c)]  $g_{s,\kappa}$  is $\mathcal P$-complete.
\end{enumerate}
\end{theorem}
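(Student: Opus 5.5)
Part (a) is a short linear-algebra argument. Since $\lr|_{\mathfrak k}$ is nondegenerate, we have $\mathfrak g=\mathfrak k\oplus\mathfrak k^\perp$. For $k,k'\in\mathfrak k$ and $p\in\mathfrak k^\perp$, ad-invariance of the biinvariant metric gives $\langle [k,p],k'\rangle=-\langle p,[k,k']\rangle=0$, because $[k,k']\in\mathfrak k$. Hence $[\mathfrak k,\mathfrak k^\perp]\subset\mathfrak k^\perp$, which is exactly the reductive condition.

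For (b) I would use formula \eqref{E:LeviCivita} of Corollary~\ref{C:EAequation}, $\nabla_xy=\frac12\{[x,y]-ad^*_xy-ad^*_yx\}$, where $*$ is the adjoint with respect to $g:=g_{s,\mathfrak k}$. Write $g(u,v)=\langle Au,v\rangle$ with $A=I+(s-1)\pi_{\mathfrak k}$, where $\pi_{\mathfrak k}$ is the orthogonal projection; then $A^{-1}=I+(s^{-1}-1)\pi_{\mathfrak k}$. Biinvariance of $\lr$ means $ad_x$ is $\lr$-skew, so $ad^*_x=-A^{-1}ad_xA$. Both sides of the claim are symmetric in $x,y$ up to the common term $\frac12[x,y]$, so by polarization it suffices to compare the quadratic parts, i.e. to check $-ad^*_xx=\delta[x_{\mathfrak k^\perp},x_{\mathfrak k}]$ with $\delta=s-1$. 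Compute $Ax=x+(s-1)x_{\mathfrak k}$, so $ad_xAx=(s-1)[x,x_{\mathfrak k}]=(s-1)[x_{\mathfrak k^\perp},x_{\mathfrak k}]$. By (a) this bracket lies in $\mathfrak k^\perp$, where $A^{-1}$ acts as the identity, so $ad^*_xx=-(s-1)[x_{\mathfrak k^\perp},x_{\mathfrak k}]$. Comparing with \eqref{E:conn}, where $\nabla^\delta_xx=\delta[x_{\mathfrak p},x_{\mathfrak k}]$ with $\mathfrak p=\mathfrak k^\perp$, gives the claim.

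The only delicate point is bookkeeping: getting the sign convention of $ad^*$ right, and using that the bracket lands in $\mathfrak k^\perp$ so that $A^{-1}$ drops out. If polarization feels uncomfortable, one can instead verify the full bilinear identity directly on the four block cases $\mathfrak k\times\mathfrak k$, $\mathfrak k\times\mathfrak k^\perp$, $\mathfrak k^\perp\times\mathfrak k$, $\mathfrak k^\perp\times\mathfrak k^\perp$. In these cases one also uses that $[\mathfrak k^\perp,\mathfrak k^\perp]$ has components in both summands, and checks that the symmetric $\delta$-term vanishes on $\mathfrak k\times\mathfrak k$ and on $\mathfrak k^\perp\times\mathfrak k^\perp$.

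Part (c) then follows at once: by (b) the Levi-Civita connection of $g_{s,\mathfrak k}$ is $\nabla^\delta$ for the reductive decomposition of (a), and Theorem~\ref{T:reddef}(c) shows that $\nabla^\delta$ is $\mathcal P$-complete. If $G$ is disconnected, apply the theorem to the identity component and translate by left translations, as remarked after Theorem~\ref{T:nabla-i-complete}.
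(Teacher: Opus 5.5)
Your proposal is correct and follows the paper's proof essentially step for step. Part (a) uses the same ad-invariance argument. Part (b) uses the same operator $A=\Lambda=I+(s-1)\pi_{\mathfrak k}$, the identity $ad^*_x=-\Lambda^{-1}ad_x\Lambda$, and the observation that the relevant bracket lies in $\mathfrak k^\perp$, so that $\Lambda^{-1}$ drops out. Part (c) follows from Theorem~\ref{T:reddef}(c). The only cosmetic difference is that the paper computes $ad_u\Lambda v+ad_v\Lambda u=(s-1)([u_{\mathfrak k^\perp},v_{\mathfrak k}]+[v_{\mathfrak k^\perp},u_{\mathfrak k}])$ directly instead of polarizing, which amounts to the same thing.
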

\begin{proof} 
a)
Let $u,w\in\mathfrak k$, $v\in\mathfrak k^\perp$. Then $[u,w]\in\mathfrak k$, hence biinvariance of $\lr$ implies
\begin{equation}
    \langle[u,v],w\rangle=-\langle v,[u,w]\rangle=0
\end{equation}
Therefore
\begin{equation}
    [\mathfrak k,\mathfrak k^\perp]\subset\mathfrak k^\perp.
\end{equation}
This shows that $\mathfrak g=\mathfrak k\oplus\mathfrak k^\perp$ is a reductive decomposition.

b) From the definition of  $g_{s,\mathfrak k}$ we get
\begin{equation}
     g_{s,\mathfrak k}(u,v):=\langle u_{\mathfrak k^\perp},v_{\mathfrak k^\perp}\rangle+s\langle u_\mathfrak k, v_\mathfrak k\rangle=\langle u,v_{\mathfrak k^\perp}\rangle+s\langle u,v_\mathfrak k\rangle=\langle u,\Lambda v\rangle,
     \end{equation}
    where $\Lambda:\mathfrak g\ra\mathfrak g$, $\Lambda v:=sv_\mathfrak k+v_{\mathfrak k^\perp}=(s-1)v_\mathfrak k+v$. 
   Let  now $u,v,w\in\mathfrak k$. Then
   \begin{equation}
       \langle w,\Lambda ad^*_uv\rangle=g_{s,\mathfrak k}(w,ad^*_uv)=g_{s,\mathfrak k}(ad_uw,v)=\langle ad_uw,\Lambda v\rangle=-\langle w, ad_u\Lambda v\rangle
   \end{equation}
This yields 
    \begin{equation}\label{E:ad*}  
    ad^*_uv=-\Lambda^{-1}ad_uv.
 \end{equation}    
Now 
$$
ad_u\Lambda v=[u,v_{\mathfrak k^\perp}+sv_\mathfrak k]=[u,v]+(s-1)[u,v_\mathfrak k]=[u,v]+(s-1)[u_\mathfrak k,v_\mathfrak k]+(s-1)[u_{\mathfrak k^\perp},v_\mathfrak k].
$$
Hence
$$
ad_u\Lambda v+ad_v\Lambda u=(s-1)([u_{\mathfrak k^\perp}, v_\mathfrak k]+[v_{\mathfrak k^\perp}, u_\mathfrak k])\in\mathfrak k^\perp.
$$
Therefore
\begin{equation}\label{E:lambdainverse}
\Lambda^{-1}(ad_u\Lambda v+ad_v\Lambda u)=ad_u\Lambda v+ad_v\Lambda u=
(s-1)([u_{\mathfrak k^\perp}, v_\mathfrak k]+[v_{\mathfrak k^\perp}, u_\mathfrak k]).
\end{equation}
From Corollary~\ref{C:EAequation} and  formulas \eqref{E:ad*} and \eqref{E:lambdainverse}   we get
    $$
    \nabla_uv=\frac{1}{2}\left([u,v]-ad^*_uv-ad^*_vu\right)=
    \frac{1}{2}\left([u,v]+(s-1)([u_{\mathfrak k^\perp}, v_\mathfrak k]+[v_{\mathfrak k^\perp}, u_\mathfrak k])\right)
    $$
 proving part b).  

c) This follows from part b) and Theorem~\ref{T:reddef} c).
\end{proof}
The  proof of part b) is  a   modification of  parts of \cite{BLP24} taylored to our more general situation. 
\subsection{Semisimple Lie groups}\label{Ss:semisimple}
   Let $G$ be a connected, real, semisimple Lie group with Lie algebra $\mathfrak g$ and denote by  $\langle.,.\rangle$   the Killing form  of $G$.
   When $G$ is noncompact, the Cartan decomposition
   $\mathfrak g=\mathfrak k\oplus\mathfrak p$   with respect to a maximal compact Lie subalgebra $\mathfrak k$ provides a reductive decomposition of $\mathfrak g$.
When $G$ is compact let $G'$ be a noncompact real form in the universal complexification $G_\bC$ of $G$ and let $K=G\cap G'$. The Cartan decomposition $\mathfrak g'=\mathfrak k\oplus\mathfrak p'$
   with respect to $K$ yields the   reductive decomposition $\mathfrak g=\mathfrak k\oplus\mathfrak p$, where $\mathfrak p:=i\mathfrak p'$.
These  special examples of Definition~\ref{D:Cheegerm}
    were studied by  Halverscheid and Iannuzzi in \cite{HI06} and \cite{HI09}.
In their notation 
    our deformed pseudo-Riemannian metric  $g_{s,\mathfrak k}$ was denoted by $\nu_m$ with 
    $m=-s$).  Among other things they determined the explicit form \eqref{E:explgeod} of the geodesics, the Levi-Civita connection and    the polar map \eqref{E:polar} as well. 
    In the noncompact case they also showed that for all $s\not=0$, the metric $g_{s,\mathfrak k}$ is not entire but there exists a maximal domain  $\Omega_s$, 
    (with  some very complicated complex geometry)
     where the ac-structure can be defined. 
In the compact case they calculated the sectional curvatures of the metric $-g_{s,\mathfrak k}$ (which is Riemannian when $0<s$) and concluded that for $4/3<s$ some sectional curvatures are negative. This 
(in light of Theorem 4.3 in \cite{LSz91}) 
implies that for such $s$, the Riemannian metric  $-g_{s,\mathfrak k}$ cannot be entire. 
 Since rescaling a (pseudo)-Riemannian metric with a scalar doesn't change the Levi-Civita connection, we get that $g_{s,\mathfrak k}$
is not entire either ($4/3<s$).
 On the other hand for any connected (compact or noncompact) semisimple Lie group $G$,
    Theorem~\ref{T:Cheegerdeformmetriccompl} implies that  the metrics
      $g_{s,\mathfrak k}$ are  $\mathcal P$-complete for all $0\not=s\in\bR$.   
\section{Vector fields}\label{S:vfields}
\subsection{Complete, homogeneous, quadratic vector fields}\label{Ss:completevf}
\begin{prop}\label{P:complete} Denote by $\mathbb K$ either the field of real or complex numbers. Let $n$ be a positive integer.
Suppose   $\mathbb K^n=W\oplus V$ is a decomposition  to $\mathbb K$-vector subspaces.
Let $A:\mathbb K^n\times \mathbb K^n\ra V$ be a  symmetric, $\mathbb K$-bilinear map. 
Define  $\varphi(z):=A(z,z)$, $z\in\mathbb K^n$ and
 denote by $\mathcal V(z)$  the  corresponding vector field on $\mathbb K^n$.
 Suppose 
 $\left.A\right|_{V\times V}\equiv 0$.
Then $\mathcal V$
is $\mathbb K$-complete i.e.  all integral curves of
\begin{equation}\label{E:phi}
    \dot z=\mathcal V(z)
\end{equation}
are defined on $\mathbb K$.
\end{prop}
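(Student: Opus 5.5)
Split $z=w+v$ with $w\in W$, $v\in V$. Since $A$ takes values in $V$, the system \eqref{E:phi} decouples:
\begin{equation*}
\dot w=0,\qquad \dot v=A(w+v,w+v)=A(w,w)+2A(w,v)+A(v,v).
\end{equation*}
The hypothesis $\left.A\right|_{V\times V}\equiv0$ kills the last term. The plan is therefore to solve the $W$-component explicitly and then observe that the $V$-component satisfies a linear equation with constant coefficients, whose solutions are entire.

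First, from $\dot w=0$ we get $w(\zeta)\equiv w_0$ for all $\zeta\in\mathbb K$. Substituting this, the $V$-component satisfies
\begin{equation*}
\dot v=L_{w_0}v+b_{w_0},\qquad L_{w_0}:=2A(w_0,\cdot)|_V:V\to V,\quad b_{w_0}:=A(w_0,w_0)\in V .
\end{equation*}
This is an inhomogeneous linear ODE with constant coefficients. Its solution is
\begin{equation*}
v(\zeta)=e^{\zeta L_{w_0}}v_0+\int_0^\zeta e^{(\zeta-s)L_{w_0}}b_{w_0}\,ds,
\end{equation*}
which is real-analytic on all of $\bR$ in the case $\mathbb K=\bR$, and entire in $\zeta$ in the case $\mathbb K=\bC$. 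Hence $z(\zeta)=w_0+v(\zeta)$ is a global solution. By uniqueness of solutions (holomorphic uniqueness in the complex case) it coincides with the maximal integral curve through $z_0=w_0+v_0$, so every integral curve is defined on $\mathbb K$.

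There is no real obstacle here. The only point that needs care is the first decoupling step: we must check that the $W$-component of $\mathcal V$ vanishes identically, which holds because the image of $A$ lies in $V$. In the complex case we should also note that the integral is path-independent, since the integrand is entire; alternatively the formula can be written as a power series in $\zeta$ with infinite radius of convergence.
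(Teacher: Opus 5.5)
Your proposal is correct and follows the paper's proof essentially verbatim. Like the paper, you use that $A$ takes values in $V$ to get $\dot z_w=0$, and you use $A|_{V\times V}\equiv 0$ to reduce the $V$-component to an inhomogeneous linear system with constant coefficients, whose solutions are defined on all of $\mathbb K$.
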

\begin{proof} For  $z\in \mathbb K^n$ write  $z=z_w\oplus z_v$ with $z_w\in W$  and $z_v\in V$. Then $\varphi(z)=A(z,z)=A(z_w,z_w)+2A(z_w,z_v)$.
  Let $z(\zeta)$ be a solution of \eqref{E:phi}. Then  equation \eqref{E:phi} can be rewritten as 
  \begin{align}
 \dot z_w&=0,\label{E:v1}\\
 \dot z_v&=\varphi(z_w)+2A(z_w,z_v).\label{E:w}
  \end{align}
  $\eqref{E:v1}$ implies that $z_w\equiv c$ constant. But then \eqref{E:w} reads as $\dot z_v=\varphi(c)+2A(c,z_v)$  a system of first order inhomogeneous linear ODE's  with constant coefficients, so its solutions are indeed defined on $\mathbb K$.
\end{proof}
Denote by $\mathcal CHQ_n$ the set of $\mathbb C$-complete complex homogeneous quadratic vector fields on $\bC^n$. Let  $\mathcal C_n$ be the set of those complex homogeneous quadratic vector fields on $\bC^n$ that corresponds to a decomposition and quadratic polynomial map  $\varphi$ as in Proposition~\ref{P:complete}, where $\dim V$ can be $1,2,\dots, n$.   Clearly $\mathcal CHQ_n$  and $\mathcal C_n$ are both  $GL(n,\bC)$-invariant and by Proposition \ref{P:complete}, $\mathcal C_n\subset\mathcal CHQ_n$. 
\begin{conj} $\mathcal C_n=\mathcal CHQ_n$.
\end{conj}
At  least for $n=2$, we can show  this to be true.
\begin{theorem}\label{T:complete2} Let $P_1(z_1,z_2)$, $P_2(z_1,z_2)$ be complex quadratic, homogeneous polynomials. 
 The  vector field $\mathcal V(z_1,z_2)=P_1\partial_{z_1}+P_2\partial_{z_2}$ is $\bC$-complete iff after an appropriate linear change of coordinates $\mathcal V$ has the form   
 $$\mathcal V_1(z_1,z_2)=z_1z_2\partial_{z_2}
  \quad \text{or}\quad 
 \mathcal V_2(z_1,z_2)=z^2_1\partial_{z_2}.
 $$
 \end{theorem}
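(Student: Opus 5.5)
The plan is to prove the two directions separately. The "if" direction is immediate: $\mathcal V_1=z_1z_2\partial_{z_2}$ and $\mathcal V_2=z_1^2\partial_{z_2}$ are both of the form in Proposition~\ref{P:complete}, with $V=\langle\partial_{z_2}\rangle$ and $A|_{V\times V}\equiv0$. Concretely, $z_1$ is constant and $z_2$ solves a linear ODE with constant coefficients, so all solutions are entire. Since $\bC$-completeness is $GL(2,\bC)$-invariant, the same holds after any linear change of coordinates. For the "only if" direction I would use the classical tool for homogeneous quadratic fields: \emph{Darboux points} and invariant lines. The zero field $\mathcal V\equiv0$ is trivially complete, so I would note it separately as a degenerate exception to the statement.

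\textbf{Step 1 (obstruction).} If $c\neq0$ satisfies $\mathcal V(c)=\lambda c$ with $\lambda\neq0$, then rescaling gives $\mathcal V(c')=c'$, and $z(\zeta)=c'/(1-\zeta)$ is a trajectory with a pole at $\zeta=1$. Hence a $\bC$-complete field has no such $c$. Equivalently, consider the cubic form
$$C(z)=z_1P_2(z)-z_2P_1(z).$$
Its zero lines are exactly the lines on which $\mathcal V(z)\parallel z$, and on each of them $\mathcal V$ must vanish identically.

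\textbf{Step 2 (reduction to a common factor).} If $C\equiv0$, then $\mathcal V(z)=l(z)z$ with $l$ linear. For $l\neq0$, any $c$ with $l(c)\neq0$ is a Darboux point, a contradiction; for $l=0$ we get $\mathcal V=0$. So assume $C\not\equiv0$. Then $C$ has at least one zero line $L$, and $P_1,P_2$ both vanish on $L$. Thus $P_1,P_2$ share a linear factor $l$, and $\mathcal V(z)=l(z)\,Bz$ with $B$ a $2\times2$ matrix. If $P_1,P_2$ had no common factor, then $\mathcal V$ would vanish only at $0$, contradicting Step 1.

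\textbf{Step 3 (case analysis; the main obstacle).} Now $C=l(z)\cdot\big(z_1(Bz)_2-z_2(Bz)_1\big)$. The zero lines of the second factor are the eigenlines of $B$ (or all lines, if $B$ is scalar). By Step 1, every eigenline of $B$ with nonzero eigenvalue must equal $\ker l$. I would choose coordinates with $l=z_1$ and split according to the Jordan form of $B$.
\begin{enumerate}
\item[(i)] $B$ scalar and nonzero: then $\mathcal V=z_1\cdot\lambda z$ is radial, which is impossible.
\item[(ii)] $B$ has two distinct eigenvalues: at most one of them can be nonzero, and its eigenline must be $\ker z_1=\langle e_2\rangle$. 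So $B$ has eigenvalues $0$ and $\mu\neq0$, with the $\mu$-eigenvector $e_2$ and some $0$-eigenvector $w\notin\langle e_2\rangle$. A change of coordinates preserving $\ker z_1$ (so $z_1$ only rescales) makes $B=\mathrm{diag}(0,\mu)$. Rescaling gives $\mathcal V=z_1z_2\partial_{z_2}$.
\item[(iii)] $B$ has a single eigenvalue $\mu$ and is not scalar: if $\mu\neq0$, the eigenline must be $\langle e_2\rangle$. Then $\dot z_1=\mu z_1^2$ blows up, so this is excluded. If $\mu=0$, $B$ is nilpotent. If $\ker B=\langle e_2\rangle$, we get $\mathcal V=z_1^2\partial_{z_2}$. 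Otherwise $\ker B\neq\ker z_1$, and I expect a direct solution to produce a pole (for example $\dot z_1=z_1z_2$, $\dot z_2=0$ is fine, but this fits form (ii) after swapping roles).
\end{enumerate}
So in the nilpotent case I must check carefully which configurations of $\ker B$ relative to $\ker l$ are complete and reduce each to $\mathcal V_1$ or $\mathcal V_2$. The step I would handle last and most carefully is this bookkeeping of the relative position of $\ker l$ and the eigenlines of $B$. When in doubt, I would integrate explicitly using the time change $d\tau=l(z)\,d\zeta$, which turns the system into the linear flow of $B$, and read off the poles.
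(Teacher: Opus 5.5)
Your route is essentially the paper's. Step~1 is the paper's observation that a $\bC$-complete field must vanish on every invariant line. Step~2 is its reduction to a common linear factor, its case a). Your Jordan-form split of $\mathcal V=l(z)Bz$ matches its case b) ($B$ invertible) and cases c), d) ($B$ of rank one). The following parts are correct: Steps~1 and~2, subcases (i) and (ii), subcase (iii) with $\mu\neq0$, and the nilpotent subcase with $\ker B=\ker l$. You are also right to set aside $\mathcal V\equiv0$, which the statement tacitly excludes.

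The gap is the remaining nilpotent subcase: $B\neq0$ nilpotent with $\ker B\neq\ker l$, where you write that you expect a pole. That expectation is false, because no field in this subcase has a pole. Write $Bz=m(z)\,w$ with $\ker m=\langle w\rangle=\ker B$, so $m(w)=0$, and $l(w)\neq0$. Then $\mathcal V(z)=l(z)m(z)\,w$. Along any trajectory, $\frac{d}{d\zeta}m(z)=l(z)m(z)m(w)=0$ and $\frac{d}{d\zeta}l(z)=l(w)\,m(z)\,l(z)$. Since $l$ and $m$ are independent, $u:=l(w)m(z)$ and $v:=l(z)$ are linear coordinates in which $\dot u=0$ and $\dot v=uv$. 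So every field in this subcase is $\bC$-complete and linearly equivalent to $\mathcal V_1$. Your example $z_1z_2\partial_{z_1}$ is not a lucky special case; it is the normal form of the whole subcase.

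The underlying reason is that for rank-one $B$ the factorization $\mathcal V=l\cdot Bz$ is not canonical. The field $\mathcal V=l\,m\,w$ has two linear factors whose roles can be swapped. Taking $m$ as the common factor replaces $B$ by $z\mapsto l(z)w$, which has eigenvalues $0$ and $l(w)\neq0$, with the nonzero eigenline $\langle w\rangle=\ker m$; that is exactly your case (ii). The paper avoids this ambiguity by writing the rank-one case symmetrically as $P_j=c_j\ell L$ and showing it is complete iff exactly one of $\ell(c),L(c)$ vanishes. The subcase you left open is $L(c)=0\neq\ell(c)$. With this correction your case analysis closes and yields exactly $\mathcal V_1$ and $\mathcal V_2$, apart from $\mathcal V\equiv0$.
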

 Abate and Tovena  \cite{AT11} gave a complete list (although not a detailed proof) of all quadratic homogeneous vector fields on $\bC^2$ up to the $GL(2,\bC)$ action. One can check that   only two vector fields on their list are $\bC$-complete. These are the fields (using the notations of 
 \cite{AT11}) $1_{00}$ which is our $\mathcal V_2$ and $2_{001}$ which is our $\mathcal V_1$. From this, our theorem follows. But we give a direct proof here not using the above classification.  See more on the paper \cite{AT11}) in Subsection~\ref{Ss:iRcomplvf}.
 \begin{proof}
 If $\mathcal V$ is one of this form, the first coordinate of the trajectories must be constant and the second coordinate has the form $ae^{bz}$ in the first case and  $az+b$ in the second case. Hence $\mathcal V$ is $\bC$-complete.
  
  To prove the opposite direction, we use the fact:  trajectories of $W(z)=cz^2\partial_z$, $z\in\bC$  are 
  never entire (only meromorphic), except if $c=0$. 
  Hence  a $\bC$-complete  vector field must vanish along  invariant complex lines (i.e. 1-dimensional complex subspaces to which $\mathcal V$ is tangential). 
  
    Since $P_j$ are quadratic homogenenous  polynomials, we can write them in the form $P_j=\ell_jL_j$, where $\ell_j, L_j\in (\bC^2)^*$. The singularity set  of $\mathcal V$ is  $S:=\{z\in\bC^2\mid \mathcal V(z)=0\}$. Thus
    $S=(\ker \ell_1\cup\ker L_1)\cap (\ker \ell_2\cup\ker L_2)$.

    a) Suppose 
    $S=\{(0,0)\}$. We show that   $\mathcal V$ is incomplete.  Since $S$ is just a point, the linear functionals $\ell_1,\ell_2$ and also $L_1$, $L_2$ must be linearly independent. 

Let $\Phi(z):=(\ell_1(z),\ell_2(z))$. Then $P_j(\Phi^{-1}(w))=w_jL_j(\Phi^{-1}(w))=w_j(a_jw_1+b_jw_2)$ with appropriate
$a_j, b_j\in\bC$ and $U:=\Phi_*\mathcal V=w_1L_1(\Phi^{-1}w)\partial_{w_1}+w_2L_2(\Phi^{-1}w)\partial_{w_2}$. Now
$p_0:=(b_2-b_1,a_1-a_2)\not=(0,0)$ since $L_1$ and $L_2$ are linearly independent. 
Furthermore a short calculation shows that
$W(\zeta p_0)=\zeta^2(\det\Phi)p_0$. Hence $W$ is a tangential vector field along the complex line $\zeta p_0$, and it is not identically zero. Therefore $W$ and consequently $\mathcal V$ is incomplete.

b) Suppose $\dim S=1$ and $P_j=\ell L_j$, with $\ell, P_j\in(\bC^2)^*$ and $L_1, L_2$ are linearly independent. Again we are going to prove that $\mathcal V$ is incomplete. First we show that after some linear change of coordinates, we can assume  $\ell(z)=z_1$ and hence
\begin{equation}\label{E:V2}
\mathcal V(z)=z_1L_1(z)\partial_{z_1}+z_1L_2(z)\partial_{z_2}.    
\end{equation}
If $\ell(z)=\alpha z_1$, we can rewrite $P_j$ as  
$P_j=z_1(\alpha L_j)$. Suppose $\ell(z)=\alpha z_1+\beta z_2$ with $\beta\not=0$. Similarly as in case a), let
$\Phi(z)=(z_1,\ell(z))$. Then $U(w)=\Phi_*\mathcal V(w)=P_1(\Phi^{-1}w)\partial_{w_1}+(\alpha P_1(\phi^{-1}w)+\beta P_2(\Phi^{-1}w))\partial_{w_2}=w_1\tilde L_1(w)\partial_{w_1}+w_1(\alpha\tilde L_1(w)+\beta \tilde L_2(w))\partial_{w_2},$
where $\tilde L_j(w)=L_j(\Phi^{-1}w)$.
As one readily sees, $\tilde L_1$ and $\alpha\tilde L_1+\beta \tilde L_2$ are linearly independent, since $L_1$ and $L_2$ are. Thus $U$ has the form we claimed. 

Back to our original notation, suppose that $\mathcal V$ is of the form \eqref{E:V2} and $L_1(z)=a_1z_1+b_1z_2$. If $b_1=0$, the equation for the first coordinate of a trajectory is $z'_1=a_1z^2_1$. This has only meromorphic solutions ($a_1$ cannot be zero, otherwise we would have $L_1=0$ and $L_1$, $L_2$ would be linearly dependent), so $\mathcal V$ is incomplete. Suppose $b_1\not=0$. To find  invariant lines, we need to solve
$$
z_1L_1(z)-z_0z_1=0,\quad z_1L_2(z)-z_0z_2=0.
$$
Substituting $(1,t)$ leads to the equation
\begin{equation}\label{E:t}
L_2(1,t)-L_1(1,t)t=0.    
\end{equation}
Since $b_1\not=0$, this is quadratic in $t$, hence it has a solution $t_0$. 
Then $\mathcal V(\zeta,t_0\zeta)=\zeta^2(L_1(1,t_0)\partial_{z_1}+L_2(1,t_0)\partial_{z_2})=\zeta^2L_1(1,t_0)(\partial_{z_1}+t_0\partial_{z_2})$, hence 
$\mathcal V$ will be tangential to the complex line
$B=(\zeta,t_0\zeta)$. If $L_1(1,t_0)$ were zero,
equation~\eqref{E:t} would yield  
$(1,t_0)\in\ker L_1\cap\ker L_2$, and so $L_1$, $L_2$ were linearly dependent, a contradiction. Therefore $L_1(1,t_0)\not=0$. Thus $\left.\mathcal V\right|_B\not\equiv0$ implying that indeed $\mathcal V$ is incomplete.

c) Suppose $\dim S=1$ and $P_j=c_j\ell L$, with $c_j\in\bC$, $\ell, L\in(\bC^2)^*$ and $\ell, L$ are linearly independent. We are going to show that $\mathcal V$ is complete iff 
$\ell(c_1,c_2)=0$ and $L(c_1,c_2)\not=0$ or reversed.
In the special case $\ell(z)=z_1$, $L(z)=z_2$ we have $\mathcal V(z_1,z_2)=c_1z_1z_2\partial_{z_1}+c_2z_1z_2\partial_{z_2}$. If $c_1$ or $c_2$ is zero, we know $\mathcal V$ is complete. If both are different from zero, with $p_0=(c_1,c_2)$
we have $\mathcal V(\zeta p_0)=\zeta^2c_1c_2p_0$. This shows that $\mathcal V$ is tangential to the line spanned by $p_0$ and it is not the zero vector field. Hence $\mathcal V$ is incomplete.
The general case is reduced to this case  again changing coordinates. 

Let $\ell(z)=\alpha z_1+\beta z_2$, $L(z)=\gamma z_1+\delta z_2$,
 $\Phi(z)=(\ell(z),L(z))$ and $U=\Phi_*\mathcal V$.
 Then 
 \begin{multline}    
 U(w)=(\alpha c_1P_1(\Phi^{-1}w)+\beta c_2P_2(\Phi^{-1}w)\partial_{w_1}+(\gamma c_1P_1(\Phi^{-1}w)+\delta c_2P_2(\Phi^{-1}w)\partial_{w_2})=\\
 =(\alpha c_1+\beta c_2)w_1w_2\partial_{w_1}+
 (\gamma c_1+\delta c_2)w_1w_2\partial_{w_2}=\ell(c_1,c_2)w_1w_2\partial_{w_1}+L(c_1,c_2)w_1w_2\partial_{w_2}. 
 \end{multline}
What was said above in the special case shows: $U$ (consequently $\mathcal V$ as well) is complete iff precisely one of $\ell(c_1,c_2)$ and $L(c_1,c_2)$ is zero.

d) Suppose $\dim S=1$ and $P_j=c_j\ell^2$, with $c_j\in\bC$, $\ell\in(\bC^2)^*$. 
The special case $\ell(z)=z_1$, $\mathcal V(z_1,z_2)=c_1z^2_1\partial_{z_1}+
c_2z^2_1\partial_{z_2}$ is complete iff $c_1=0$ ($z'_1=c_1z^2_1$). 

The general case again can be reduced to this. Let $\ell(z)=\alpha z_1+\beta z_2$. If $\alpha=0$, $\mathcal V$ will be complete iff $c_2=0$ as well ($z'_2=c_2\beta^2z^2_2$). So we can assume $\alpha\not=0$. Let $\Phi(z)=(\ell(z),z_2)$.  Then $\ell(\Phi^{-1}w)=w_1$ and
$U=\Phi_*\mathcal V(w)=(\alpha P_1(\Phi^{-1}w)+
\beta P_2(\Phi^{-1}w))\partial_{w_1}+P_2(\Phi^{-1}w)\partial_{w_2}=(\alpha c_1 w^2_1+\beta c_2w^2_1)\partial_{w_1}+c_2w^2_1\partial_{w_2}=\ell(c_1,c_2)w^2_1+c_2w^2\partial_{w_2}$.
The special case implies that $U$ (and thus $\mathcal V$) is complete iff $\ell(c_1,c_2)=0$.
 \end{proof}
The homogeneous quadratic vector fields on $\mathbb C^2$ forms  a complex vector space $\mathcal{HQ}_2$ of dimension 6 on which $GL(2,\mathbb C)$ acts. The set of complete vector fields $\mathcal CHQ_2$ is a  very small subset in $\mathcal{HQ}_2$. Easy calculations show that the stabilizer of $\mathcal V_1(z)=z_1z_2\partial_{z_2}$ and 
$\mathcal V_2(z)=z^2_1\partial_{z_2}$ are 
$$
\text{Stab} \mathcal V_1=\left\{\left(\begin{matrix}
 1&0\\0&\delta    
\end{matrix}\right)\mid \delta\in \mathbb C_*\right\},
\qquad
\text{Stab} \mathcal V_2=\left\{\left(\begin{matrix}
 \alpha&0\\\gamma&\alpha^2    
\end{matrix}\right)\mid \alpha\in \mathbb C_*,\gamma\in\mathbb C\right\}.
$$
Hence the dimension of the orbit
of $\mathcal V_1$ (resp. $\mathcal V_2)$ is $3$ (resp. $2$).
  \subsection{\texorpdfstring{$i\bR$}P-complete vector fields}\label{Ss:iRcomplvf}
 \begin{definition}\label{D:iRcomplete} Let
  $\mathcal W$ be a smooth, real vector field on $\bR^n$. A trajectory of $\mathcal W$ is $\bC$-complete  if it extends  to
a holomorphic map $\bC\ra \bC^n$.
We say that  the $\mathcal W$-trajectory $c$, $c(0)=p$ is  {\it i$\bR$-complete} at $p=c(0)$ or that $\mathcal W$ is {\it i$\bR$-complete} at $p$
 if   $c$,  extends holomorphically  to some open neighborhood of $i\bR$. 
  $\mathcal W$ is called {\it $\bC$-complete}
  (resp. {\it i$\bR$-complete}), if every trajectory is $\bC$-complete (resp. {\it i$\bR$-complete}).
\end{definition}
When $\mathcal W$ is both $\bR$-complete and $i\bR$-complete, then necessarily $\mathcal W$ will be $\bC$-complete.
\begin{prop}\label{P:t^2+a}  
Let  $A,B\in\bR$. Consider the following vector fields on $\bR$.

(a)
$\mathcal{W}(y):=(Ay^2+B)\partial_y$  is  $i\bR$-complete  iff   $A=0$ or $\frac{B}{A}\ge0$. When  $\frac{B}{A}<0$, $\mathcal W$ is  $i\bR$-complete at every point except at $p=0$.

(b)  
$\mathcal{W}(y):=(Ay^2+By)\partial_y$ is $i\bR$-complete  iff $A=0$.
When $A\not=0$, $\mathcal W$ is 
 $i\bR$-complete at every pont  except  at 
 $p=-\frac{B}{2A}\in \bR$.
\end{prop}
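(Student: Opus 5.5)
Since both fields are one-dimensional Riccati-type equations, the plan is to solve them explicitly. Then we read off the singularities of the holomorphic extension of each trajectory $y(\zeta)$, $y(0)=p$, and decide when none of them lies on $i\bR$. The local holomorphic solution exists because the right-hand side is a polynomial. It extends meromorphically along any path, and analytic continuation to a neighbourhood of $i\bR$ exists iff no pole of the (single-valued, here elementary) solution lies on $i\bR$. If $A=0$ both equations are linear: in (a) $y=p+B\zeta$, in (b) $y=pe^{B\zeta}$. Both are entire, so this case is immediate.

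For (a) with $A\neq0$, rescale so that the equation becomes $y'=A(y^2+c)$ with $c=B/A$. If $c=0$, then $y=p/(1-Ap\zeta)$, whose pole $\zeta=1/(Ap)$ is real (or absent when $p=0$), so it never lies on $i\bR$. If $c=k^2>0$, then $y=k\tan(Ak\zeta+\theta_0)$ with $\tan\theta_0=p/k$ and $\theta_0\in(-\pi/2,\pi/2)$ real. The poles are at $Ak\zeta+\theta_0\in\pi/2+\pi\bZ$, i.e. at real $\zeta$, so again $i\bR$ is avoided. If $c=-k^2<0$, then $y'=A(y-k)(y+k)$ gives
\[
\frac{y-k}{y+k}=\frac{p-k}{p+k}\,e^{2Ak\zeta},
\]
so poles occur where $e^{2Ak\zeta}=(p+k)/(p-k)$. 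For real $p\neq\pm k$ this ratio is a nonzero real number. A pole $\zeta=is$ would need $e^{2iAks}$ to be real, i.e. equal to $\pm1$. We have $(p+k)/(p-k)=-1$ iff $p=0$, and $=1$ never occurs. So for $p=0$ there are poles at $2Aks\in\pi+2\pi\bZ$, which lie on $i\bR$; indeed $y(is)=-k\tan(Aks)$ directly. For $p\neq0$ the argument of $(p+k)/(p-k)$ is $0$, since it is a positive real when $|p|>k$. When $|p|<k$ and $p\neq0$ the ratio is a negative real different from $-1$, so its log is $\log|r|+i\pi(2m+1)$ with $\log|r|\neq0$. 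Hence the poles have nonzero real part and miss $i\bR$. The points $p=\pm k$ are equilibria. This gives (a) including the exceptional point $p=0$.

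For (b) with $A\neq0$, complete the square with $u=y+B/(2A)$. This gives $u'=A(u^2-B^2/(4A^2))$, which is case (a) with $c=-B^2/(4A^2)\le0$. If $B\neq0$ we are in the $c<0$ subcase, so the only bad point is $u=0$, i.e. $p=-B/(2A)$. If $B=0$ we have $c=0$, where every point is fine but $p=0=-B/(2A)$ is an equilibrium. Here I must recheck the statement's claim. For $B=0$, $y'=Ay^2$ has trajectories $p/(1-Ap\zeta)$ with real poles, so it would be $i\bR$-complete. The statement's ``iff $A=0$'' then presumably intends $B\neq0$ or should be read accordingly. The main obstacle is exactly this careful bookkeeping of the logarithm branches, i.e. showing that the pole set meets $i\bR$ only for the single exceptional initial value, together with the degenerate subcases.
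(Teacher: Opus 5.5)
Your argument is correct and is essentially the paper's proof of (a). You solve the Riccati equation explicitly and locate the poles; the paper first rescales time to get $\dot x=x^2+B/A$, while you keep the factor $A$. The poles miss $i\bR$ except at $p=0$ when $B/A<0$. For (b), which the paper leaves to the reader, your completion of the square reducing it to (a) is the natural route. There is one harmless slip: $y(is)=-k\tanh(iAks)=-ik\tan(Aks)$.

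Your caveat about (b) is also justified. For $A\neq0=B$ the field $Ay^2\partial_y$ is $i\bR$-complete, as part (a) of the same statement asserts since $B/A=0$. So (b) should read ``iff $AB=0$'', with the single exceptional point $p=-B/(2A)$ only when $AB\neq0$. This does not affect the paper's one use of (b), for $\mathcal W_5$, where one may take $x_2(0)\neq0$.
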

\begin{proof}
(a) When  $A=0$, $\mathcal W$ will be trivially $\bC$-complete.
Suppose  $A\neq 0$ and $y$ is a trajectory of $\mathcal{W}$. Let $x(t)=y(t/A)$. Then $x(t)$ satisfies the equation
\begin{equation}\label{E:x^2+a}
    \dot{x}(t)=x(t)^2+a,
\end{equation}
where $a=\frac{B}{A}$.  Hence it suffices to deal with the vector field $\mathcal V(x)=(x^2+a)\partial_x$.  

If $a=0$  all trajectories of $\mathcal V$ have the form
\begin{equation}
    x(t)=\frac{x(0)}{1-x(0)t}.
\end{equation}
When $x(0)=0$, $x(t)$ is constant, hence $\bC$-complete. If $x(0)\not=0$, $x$ extends meromorphically to $\bC$ with a sole singularity at $1/x(0)\in\bR$, so  $\mathcal V$ is $i\bR$-complete.

If $a>0$, the trajectories are 
\begin{equation}
x(t)=\sqrt{a}\tan\left(\sqrt{a}t+\arctan\left(\frac{x(0)}{\sqrt{a}}\right)\right).
\end{equation}
The poles of $\tan(z)$ are located on the real line at $\pi/2+k\pi$, where $k\in\bZ$, and the range of $\arctan(t)$ is $(-\pi/2,\pi/2)$. So $x(t)$ is holomorphic in a neighborhood of $i\bR$, hence $\mathcal V$ is again $i\bR$-complete.

If $a<0$ let $b=\sqrt{-a}$. Thus $x(t)$ is a solution of $\dot{x}=x^2-b^2$. If $x(0)=\pm b$, then $x(t)\equiv \pm b$, so $x(t)$ is $\bC$-complete. Otherwise 
$$x(t)=b\frac{(x(0)+b)e^{-bt}+(x(0)-b)e^{bt}}{(x(0)+b)e^{-bt}-(x(0)-b)e^{bt}}.$$
So $x(t)$ has a meromorphic extension to $\bC$ having a singularity at z iff
\begin{equation}\label{E:pole}
    \frac{x(0)+b}{x(0)-b}=e^{2bz}.
\end{equation}
If $x(0)\neq0$, equation \eqref{E:pole} has no solution in a small enough open neighborhood of the imaginary axes yielding the $i\bR$-completenes of $\mathcal V$ at $0\not=x(0)$. On the other hand if $x(0)=0$, the trajectory  has the form
\begin{equation}
    x(t)=-b\tanh(bt),
\end{equation}
which does admit singular points on the imaginary axis, so $\mathcal V$  is not $i\bR$-complete at $0$.

The proof of (b) is similar and left to the reader.
\end{proof}
\begin{prop}\label{P:q-v}
    Let $v\in\bR^n$, and $q:\bR^n\times\bR^n\rightarrow\bR$  a symmetric bilinear form. Then the vector field $\mathcal{W}(x)=\sum_j(q(v,x)x_j-q(x,x)v_j)\partial_{x_j}$ is not $i\bR$-complete at $u\in\bR^n$ iff   $q(v,u)=0$  and $q(v,v)q(u,u)< 0$. In particular, if $v$ is lightlike, the vector field is $i\bR$-complete.
\end{prop}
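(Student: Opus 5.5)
The plan is to reduce the $n$-dimensional system $\dot x=\mathcal W(x)$ to a single scalar Riccati equation and then apply Proposition~\ref{P:t^2+a}(a). Let $x(t)$ be the trajectory with $x(0)=u$, and set
$$
y(t):=q(v,x(t)),\qquad r(t):=q(x(t),x(t)).
$$

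First, I would differentiate these two quantities along the trajectory using $\dot x=q(v,x)x-q(x,x)v$:
$$
\dot r=2q(x,\dot x)=2\big(q(v,x)q(x,x)-q(x,x)q(v,x)\big)=0,
$$
$$
\dot y=q(v,\dot x)=y^2-q(v,v)\,r .
$$
So $r\equiv q(u,u)$ is a first integral, and $y$ solves the autonomous equation $\dot y=y^2+B$ with $B=-q(v,v)q(u,u)$ and initial value $y(0)=q(v,u)$. This is exactly the situation of Proposition~\ref{P:t^2+a}(a) with $A=1$. That proposition says $y$ fails to be $i\bR$-complete only when the constant term has the sign that makes $y$ a $\tanh$-type solution, and moreover only at the initial point $y(0)=0$, i.e.\ only when $q(v,u)=0$.

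Second, I would transfer the conclusion from $y$ back to $x$, in both directions. Suppose $y$ extends holomorphically to a neighborhood $U$ of $i\bR$; shrinking $U$, we may take it simply connected. Then $x$ solves the linear system
$$
\dot x=y(t)\,x-q(u,u)\,v,
$$
whose coefficients are holomorphic on $U$. Hence $x$ continues holomorphically along $U$, by the linear existence theory and the monodromy argument already used in Proposition~\ref{P:CMC}. Conversely, if $x$ extended holomorphically to a neighborhood of $i\bR$, then so would the linear function $y=q(v,x)$. Therefore $x$ is $i\bR$-complete at $u$ iff $y$ is $i\bR$-complete at $q(v,u)$. For the last assertion of the statement: if $v$ is lightlike then $q(v,v)=0$, so the constant term vanishes, and part (a) of Proposition~\ref{P:t^2+a} (the case $a=0$, whose only poles are real) gives $i\bR$-completeness of every trajectory.

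The main obstacle, and the step I would check most carefully, is the sign bookkeeping. One has to identify exactly which sign of $q(v,v)q(u,u)$ yields the $\tanh(bt)$ solution, whose poles $i\pi/(2b)+ik\pi/b$ lie on $i\bR$, as opposed to the $\tan$ solution, whose poles are real. For this I would write the explicit solution of $\dot y=y^2-q(v,v)q(u,u)$ with $y(0)=0$ in both sign cases and locate its poles directly, rather than relying only on the case labels in Proposition~\ref{P:t^2+a}. This requires keeping track of the sign convention of $\mathcal W$, in particular the minus sign in front of $q(x,x)v$ and the orientation of time, and matching it against the convention used in that proposition. I would then compare the resulting condition with the wording ``$q(v,u)=0$ and $q(v,v)q(u,u)<0$'' of the statement. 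The degenerate cases $q(u,u)=0$ or $q(v,v)=0$, where $y$ solves $\dot y=y^2$ and has only a real pole, fall outside the exceptional set in all cases and need no special treatment.
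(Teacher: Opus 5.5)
Your reduction is essentially the paper's own argument. In both, $q(x,x)$ is a first integral, and $y=q(v,x)$ satisfies $\dot y=y^2-q(v,v)q(u,u)$, which Proposition~\ref{P:t^2+a}(a) handles; the paper calls this quantity $x_w$, after splitting $\bR^n=\ker q(v,\cdot)\oplus\langle w\rangle$ with $q(v,w)=1$. The remaining unknowns then satisfy a linear system with coefficients holomorphic near $i\bR$. Writing that system for all of $x$, as $\dot x=y\,x-q(u,u)v$, is slightly cleaner than the paper's version. It needs no choice of $w$ and no separate case $q(v,\cdot)\equiv0$. You also state the converse direction ($y$ is linear in $x$), which the paper leaves implicit.

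The gap is the step you postponed: the sign does not come out as the statement says. Put $c:=q(v,v)q(u,u)$ and take $y(0)=q(v,u)=0$. If $c>0$, then $y(t)=-\sqrt{c}\tanh(\sqrt{c}\,t)$, whose poles $i\pi(2k+1)/(2\sqrt{c})$ lie on $i\bR$. If $c<0$, then $y(t)=\sqrt{-c}\tan(\sqrt{-c}\,t)$, whose poles are all real. Equivalently, in Proposition~\ref{P:t^2+a}(a) you have $A=1$, $B=-c$, and failure requires $B/A<0$, i.e. $c>0$.

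So the correct characterization is: $\mathcal W$ is not $i\bR$-complete at $u$ iff $q(v,u)=0$ and $q(v,v)q(u,u)>0$. The proposition as printed, with ``$<0$'', is false in both directions:
\begin{itemize}
\item For the Euclidean $q$ on $\bR^2$, $v=e_1$, $u=e_2$, the trajectory is $x(t)=(-\tanh t,\,1/\cosh t)$. It is not $i\bR$-complete, although $q(v,v)q(u,u)=1>0$.
\item For $q(x,y)=x_1y_1-x_2y_2$ with the same $v,u$, the trajectory is $x(t)=(\tan t,\,1/\cos t)$. It is $i\bR$-complete, although $q(v,v)q(u,u)=-1<0$.
\end{itemize}
The paper's proof has the same sign slip in its last sentence. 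Its applications, however, use the condition $>0$: the treatment of $\mathcal W_6$ in Proposition~\ref{P:ir-complete-dim2} and the proof of Theorem~\ref{T:type F i-complete}. You should finish the sign computation, conclude with ``$>0$'', and state explicitly that the proposition must be corrected. The lightlike assertion is unaffected.
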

\begin{proof}
     $q(x,x)$ will be a first integral of $\mathcal{V}$, since 
    for an arbitrary trajectory $x$
    $$q(x,x)'=2q(\dot{x},x)=q(q(v,x)x-q(x,x)v,x)=q(q(v,x)x,x)-q(q(x,x)v,x)=0.$$    
    If $q(v,.)\equiv 0$, then $\mathcal{V}(x)=-q(x,x)v=-q(x(0),x(0))v$   and the trajectories take the form $x(t)=x(0)-q(x(0),x(0))tv$. That shows $\mathcal W$ is $\bC$-complete.

    If $q(v,.)\not\equiv 0$, there is a $w\in\bR^n$ such that $q(v,w)=1$.  Denote $\ker(q(v,.))$ by $V$. Then
    $\bR^n=V\oplus \langle w\rangle$. Now let $x\in\bR^n$. Then  $x=x_V+x_ww$, where $x_V\in V$. Hence $x_w=q(x,v)$ and  for $v=x$ we get $v=v_V+v_ww=v_V+q(v,v)w$. Now let $x(t)$ be a trajectory. Since $q(x,x)$ is constant,   $x$ must satisfy the following system of ODE. 
    \begin{equation}\label{E:q-v-w}
        \dot{x}_w=q(v,x)x_w-v_wq(x,x)=x_w^2-q(v,v)q(x(0),x(0)).
    \end{equation}
\begin{equation}\label{E:q-v-V}
        \dot{x}_V=x_wx_V-q(x(0),x(0))v_V.
    \end{equation}
Suppose $q(v,v)=0$.  Then \eqref{E:q-v-w} implies that $x_w$  takes the form $x_w(t)=\frac{x_w(0)}{1-x_w(0)t}$. So,
    if $x_w(0)=0$, then $x_w\equiv0$ and equation \eqref{E:q-v-V} shows the trajectory $x$ is $\bC$-complete. If $x_w(0)\not=0$, we still get that $x_w(t)$ extends holomorphically to the strip $U_{\eta}=\{\zeta\in\bC:|\Re \zeta|<\eta\}$  where $ \eta=1/|x_w(0)|\}$. Equation \eqref{E:q-v-V} then defines an inhomogeneous, linear system of ODEs with holomorphic coefficients in $U_{\eta}$. Therefore its solutions are also holomorphic on $U_{\eta}$. Hence  $\mathcal W$ is $i\bR$-complete.

    Suppose $q(v,v)\neq 0$. 
    Then \eqref{E:q-v-w}, Proposition \ref{P:t^2+a} (a) and \eqref{E:q-v-V} shows that the trajectory $x(t)$ does not extend holomorphically to a neighborhood of $i\bR$  iff $x_w(0)=0$ and $q(v,v)q(x_V(0),x_V(0))<0$, proving the Proposition.
\end{proof}
\begin{prop}\label{P:l-L-Omega} 
Let $n$ be a positive integer.
\begin{enumerate}
    \item[a)] For  an $l\in {(\bR^n)}^*$
 define the  map $\varphi:\bR^n\ra\bR^n$ by
$\varphi(x):=l(x)x$ and let $\mathcal W$ be the corresponding
   vector field on $\bR^n$.
Then $\mathcal W$ is $i\bR$-complete.
More generally we have.
\item[b)] Suppose   $\bR^n=W\oplus V$ is a decomposition  to vector subspaces. Let $A:W\times W\ra W$ and  $B:\bR^n\times\bR^n\ra V$ be  symmetric bilinear maps and $\left.B\right|_{V\times V}\equiv0$. Define $\psi(u):=A(u,u), u\in W$
and $\varphi(x):=A(x_w,x_w)\oplus B(x,x)$.
Denote by  $\mathcal V_w$ the corresponding vector field to $\psi$
 on $W$  
 (resp. of $\varphi$  on $\bR^n$ by $\mathcal W$). Assume that $\mathcal V_w$ is $i\bR$-complete.
 Then $\mathcal V$ is $i\bR$-complete. When $A=0$, 
$\mathcal V$ will be even  $\bC$-complete. 
\end{enumerate}
\end{prop}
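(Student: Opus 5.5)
Part (a) reduces to a scalar equation. For a trajectory $x(t)$ of $\dot x=l(x)x$, the direction of $x$ never changes. Indeed, if $x(0)=p$, the ansatz $x(t)=f(t)p$ gives $\dot f=l(p)f^2$ with $f(0)=1$. Hence
$$f(t)=\frac{1}{1-l(p)t}, \qquad x(t)=\frac{p}{1-l(p)t}.$$
By uniqueness this is the trajectory. Its holomorphic extension is meromorphic on $\bC$, with a single possible pole at the real point $t=1/l(p)$ (none if $l(p)=0$). So it is holomorphic on the open neighborhood $\{\zeta:|\Re\zeta|<1/|l(p)|\}$ of $i\bR$, and $\mathcal W$ is $i\bR$-complete. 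The same conclusion also follows from Proposition~\ref{P:t^2+a}(b) with $B=0$, after observing that the trajectory stays on the line $\bR p$. Part (a) is in fact the special case $V=0$, $A(u,u)=l(u)u$ of part (b), but the direct formula is cleaner.

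For part (b), I would split the system according to $\bR^n=W\oplus V$. Since $B$ takes values in $V$, the trajectory equation reads
$$\dot x_w=A(x_w,x_w),\qquad \dot x_v=B(x_w,x_w)+2B(x_w,x_v),$$
where $B(x_v,x_v)=0$ was used. The first equation is just the flow of $\mathcal V_w$ on $W$ and decouples. By hypothesis $x_w$ extends holomorphically to some open neighborhood $U$ of $i\bR$. I may shrink $U$ to a simply connected one, for instance a neighborhood of $i\bR$ that also contains the real segment up to $0$ and is star-shaped with respect to $0$.

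Plugging this extension into the second equation gives a linear inhomogeneous system for $x_v$,
$$\dot x_v=C(\zeta)x_v+b(\zeta),\qquad C(\zeta)=2B(x_w(\zeta),\cdot),\quad b(\zeta)=B(x_w(\zeta),x_w(\zeta)),$$
with coefficients holomorphic on $U$. By the standard theorem on linear holomorphic ODEs (\cite{Hi97}), the solution extends holomorphically to all of $U$. This is the same argument used in the proof of Proposition~\ref{P:q-v}, and it gives $i\bR$-completeness of $\mathcal W$. When $A=0$, $x_w$ is constant, so $U=\bC$ works and $\mathcal W$ is $\bC$-complete; this is exactly Proposition~\ref{P:complete}.

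The only delicate point is ensuring the domain is simply connected, so that the linear solution is single-valued. This holds because the neighborhood on which $x_w$ lives can be chosen as a thin tube around $i\bR$, which is simply connected.
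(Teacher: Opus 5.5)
Your proof is correct. Part (b) follows the paper's argument: split along $W\oplus V$, note that $x_w$ solves the decoupled $\mathcal V_w$-equation and so extends to a neighborhood of $i\bR$, and observe that $x_v$ then solves a linear inhomogeneous system with holomorphic coefficients there. The paper passes over the need to shrink to a simply connected tube around $i\bR$ so that the linear solution is single-valued; your remark supplies that point.

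Part (a) takes a different route. You note that the direction of $x$ is preserved and write down $x(t)=p/(1-l(p)t)$. This shows at once that the trajectory is meromorphic on $\bC$ with only the real pole $1/l(p)$, and it gives an explicit strip of half-width $1/|l(p)|$, with no ODE theory needed.

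The paper instead splits $\bR^n=\langle u\rangle\oplus\ker l$ with $l(u)=1$. It solves the scalar equation $\dot x_u=x_u^2$ via Proposition~\ref{P:t^2+a}(a) and then treats $\dot x_l=x_ux_l$ as a linear equation with holomorphic coefficients. In other words, it proves (a) by exactly the mechanism of (b), with $W=\langle u\rangle$, $V=\ker l$, $A(x_uu,y_uu)=x_uy_uu$ and $B(x,y)=\tfrac12(x_uy_l+y_ux_l)$. This is what makes (b) a genuine generalization of (a). Your formula buys explicitness; the paper's route buys a single template for both parts.

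It also shows that your aside, calling (a) the case $V=0$ of (b), is not the right embedding. With $V=0$ the hypothesis of (b) is just the conclusion of (a), so that reduction is circular; the non-circular one is the paper's. Minor point: for the scalar equation $\dot f=l(p)f^2$ the apt reference is Proposition~\ref{P:t^2+a}(a) with $B=0$, as in the paper. Your citation of part (b) still gives the right conclusion, since $f(0)=1$ is not the excluded point $0$.
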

\begin{proof} 
a)  If $l\equiv0$, $\mathcal W$ is trivially $\bC$ complete. Otherwise, let us write  $\bR^n=\langle u\rangle\oplus \ker(l)$, where $u\in \bR^n$ is such that $l(u)=1$. Every $x\in \bR^n$ can be writen  as $x=x_u u\oplus x_l $, where $l(x_l)=0$. Now  a curve $x(t)$, defined in a neighborhood of $0$,  will be an integral curve of $\mathcal W$ iff 
\begin{align}
 \dot x_u&=x^2_u,\label{E:ucomp}\\
 \dot x_l&=x_u x_l.\label{E:lcomp}
\end{align}
Proposition \ref{P:t^2+a}(a) shows that $x_u$ extends holomorphically to a neighborhood of the imaginary axes. Therefore \eqref{E:lcomp} is a
homogeneous linear system with holomorphic coefficients in this neighborhood, yielding that $\mathcal W$ is $i\bR$-complete at $x(0)$.

b) Let $x(t)$ be a trajectory of $\mathcal W$, defined in a neighborhood of $0$.
Then $x(t)$ must satisfy the following system of ODE's.
\begin{align}
   \dot x_w&=\psi(x_w)\label{E:first1} \\
   \dot x_v&=B(x,x)\label{E:2nd}
\end{align}
From our assumptions we know that $x_w$ extends holomorphically to some neighborhood of $i\bR$
and $B(x,x)=B(x_w,x_w)+2B(x_w,x_v)$.
Therefore the system \eqref{E:2nd} is an inhomogenous system of linear equations with holomorphic coefficients in some neighborhood of $i\bR$, yielding that $\mathcal W$ is $i\bR$-complete at $x(0)$.

If $A\equiv0$, $x_w$ must be constant, therefore the system \eqref{E:2nd}
has constant coefficients and so the solution exists on $\bC$.    
\end{proof}
The complex quadratic homogeneous vector fields on $\bC^2$ w.r.t. the $GL(2,\bC)$ action were classified  in the paper \cite{AT11}, Sect.9.
The classification is according to the number of characteristic directions (invariant complex lines through the origin) and the fact whether restriction to an invariant line is the zero vector field or not. There are 11 equivalent classes. In certain cases a type in this classification has one or two free parameters. If we allow these parameters to be real only, all of the vector fields in this list are complexifications of real quadratic vector fields. Among them two are $\bC$-complete, see Subsection~\ref{Ss:completevf} and Theorem~\ref{T:complete2}. Out of the remaining 9
classes we were able to determine in 6 cases whether the corresponding real homogeneous vector field is $i\bR$-complete or not.
 The tag indicates the classification type of the complexified vector field used in  \cite{AT11}. We did not deal with the more general question: what is the complete list of  the real quadratic homogeneous vector fields w.r.t. the $GL(2,\bR)$ action. 
\begin{prop}\label{P:ir-complete-dim2}
 Among the  quadratic, real homogeneous vector fields 
 \begin{equation}\tag{$\infty$}
 \mathcal W_1(x_1,x_2)=x^2_1\partial_{x_1}+x_1x_2\partial_{x_2}
\end{equation}
\begin{equation}\tag{$1_{10}$}
 \mathcal W_2(x_1,x_2)=-x^2_1\partial_{x_1}
 -(x^2_1+x_1x_2)\partial_{x_2} 
\end{equation}
\begin{equation}\tag{$2_{10\rho}$}
 \mathcal W_3(x_1,x_2)=-\rho x^2_1\partial_{x_1}+(1-\rho)x_1x_2\partial_{x_2}, \quad \rho\in\bR,\ \rho\not=0
\end{equation}
\begin{equation}\tag{$3_{\frac1{2}10}$}
\mathcal W_4(x_1,x_2)=\frac1{2}(-x^2_1+x_1x_2)\partial_{x_1}
 +\frac1{2}(-x^2_2+x_1x_2)\partial_{x_2} 
\end{equation}
\begin{equation}\tag{$3_{100}$}
 \mathcal W_5(x_1,x_2)=(x^2_1-x_1x_2)\partial_{x_1}
\end{equation}
\begin{equation}\label{E:first}\tag{$3_{\frac1{2}10}$}
\mathcal W_6(x_1,x_2)=-x^2_2\partial_{x_1}
 +x_1x_2\partial_{x_2} 
\end{equation}
$\mathcal W_1$, $\mathcal W_2$, $\mathcal W_3$, $\mathcal W_4$ are not $\bC$ complete but $i\bR$-complete on $\bR^2$;
$\mathcal W_5$ and $\mathcal W_6$ are not even $i\bR$-complete.
\end{prop}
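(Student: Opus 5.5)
We will treat each field separately, exploiting in every case a triangular or first-integral structure that reduces the flow to scalar Riccati-type equations. Those scalar equations are then handled by Proposition~\ref{P:t^2+a} together with the linear-ODE argument used in Proposition~\ref{P:l-L-Omega}.

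\emph{Non-$\bC$-completeness.} Each complexification has a complex invariant line on which it restricts to a nonzero field of the form $cz^2\partial_z$, whose trajectories have poles. For $\mathcal W_1$ this line is $x_2=0$, where the field is $x_1^2\partial_{x_1}$. For $\mathcal W_3$ and for $\mathcal W_5$ it is $x_2=0$. For $\mathcal W_4$ it is the diagonal $x_1=x_2$, where the field is $0$, so we use $x_2=0$ instead, where it is $-\frac12x_1^2\partial_{x_1}$. For $\mathcal W_2$ we look for an invariant line $x_2=tx_1$; this leads to $-t-1=-t$, so the only invariant line is $x_1=0$, where the field vanishes. Since that line gives nothing, we argue instead with the first coordinate: $x_1'=-x_1^2$ has a pole. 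For $\mathcal W_6$ we use the line $x_2=0$ and check it directly. The general principle, used in the proof of Theorem~\ref{T:complete2}, is that $\bC$-complete fields vanish on their invariant lines.

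\emph{$i\bR$-completeness of $\mathcal W_1,\dots,\mathcal W_4$.} $\mathcal W_1$ is exactly Proposition~\ref{P:l-L-Omega}(a) with $l(x)=x_1$. For $\mathcal W_2$ and $\mathcal W_3$ the first equation is $\dot x_1=-x_1^2$ (resp. $-\rho x_1^2$). Its solution $x_1=x_1(0)/(1+cx_1(0)t)$ has its only pole on the real axis, so it is holomorphic on a strip around $i\bR$. The second equation is then linear in $x_2$ with coefficients holomorphic on that strip, so $x_2$ extends as in Proposition~\ref{P:l-L-Omega}.

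For $\mathcal W_4$ we pass to the variables $s=x_1+x_2$ and $d=x_1-x_2$. A short computation gives
\[
\dot d=-\tfrac12 d\,s,\qquad \dot s=-\tfrac12(x_1^2+x_2^2)+x_1x_2=-\tfrac12 d^2 .
\]
This is not triangular, but it has the first integral $d^2-s^2$. Substituting $d^2=s^2+c$ gives $\dot s=-\frac12(s^2+c)$, where $c=d(0)^2-s(0)^2$ is real. Proposition~\ref{P:t^2+a}(a) applies whenever $c\ge0$. When $c<0$, the exceptional point is $s(0)=0$, which forces $d(0)^2<0$, impossible for real initial data. Hence every real trajectory yields an $s$ that is holomorphic near $i\bR$. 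Finally $d$ solves the linear equation $\dot d=-\frac12 ds$, so $d=d(0)\exp(-\frac12\int s)$ extends as well. Some care is needed near points where $s$ has real poles, but these lie off $i\bR$, so they cause no trouble.

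\emph{Failure for $\mathcal W_5,\mathcal W_6$.} For $\mathcal W_5$ we have $x_2$ constant equal to $b$, and $\dot x_1=x_1^2-bx_1$. By Proposition~\ref{P:t^2+a}(b), taking $b\neq0$ and $x_1(0)=b/2$ gives a trajectory that is not $i\bR$-complete. For $\mathcal W_6$ the first integral is $x_1^2+x_2^2$, which is constant, say $r^2$. Then $\dot x_2=x_1x_2$, and differentiating gives $\ddot x_1=-2x_1x_2^2=-2x_1(r^2-x_1^2)$; we then pass to a Riccati form. More directly, $\dot x_1=-x_2^2=x_1^2-r^2$, which is Proposition~\ref{P:t^2+a}(a) with $a=-r^2<0$. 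At the point $x_1(0)=0$, $x_2(0)=r\neq0$ the solution $-r\tanh(rt)$ has poles on $i\bR$, so $\mathcal W_6$ is not $i\bR$-complete.

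The main obstacle is $\mathcal W_4$, the one field without triangular structure. There the plan depends on finding the right coordinates and the first integral, and on checking that the exceptional case of Proposition~\ref{P:t^2+a}(a) cannot occur for real data.
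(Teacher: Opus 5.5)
Your proposal is correct and is essentially the paper's proof. You handle $\mathcal W_1,\mathcal W_2,\mathcal W_3$ through the triangular structure behind Proposition~\ref{P:l-L-Omega}, and $\mathcal W_5$ through Proposition~\ref{P:t^2+a}(b). For $\mathcal W_4$ and $\mathcal W_6$, your first-integral/Riccati reduction is the proof of Proposition~\ref{P:q-v} written out by hand: for $\mathcal W_4$, with $q(x,y)=-\frac12(x_1y_2+x_2y_1)$ and $v=(1,1)$, your $s$ is $-2q(v,x)$ and $d^2-s^2=4q(x,x)$. Your computation also confirms that the obstruction is $q(v,u)=0$ together with $q(v,v)q(u,u)>0$, which is the sign the paper actually uses here; the statement of Proposition~\ref{P:q-v} has this inequality reversed.

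The only slip is your remark on $\mathcal W_6$. It vanishes identically on $x_2=0$; its invariant lines with nonzero restriction are the non-real lines $x_2=\pm i x_1$. So that check proves nothing, but it is superfluous: your $-r\tanh(rt)$ trajectory already shows $\mathcal W_6$ is not $i\bR$-complete, hence not $\bC$-complete.
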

\begin{proof} Let  $e_j$ be the standard bases in $\bR^2$  and
     $W=\langle e_1\rangle$ and $V=\langle e_2\rangle$.
     
 Proposition~\ref{P:l-L-Omega}(b) with 
 $$
 A(xe_1,ye_1):=xye_1,\qquad B((x_1,x_2),(y_1,y_2)):=\left(\frac{x_1y_2+y_1x_2}{2}\right)e_2
 $$ shows that $\mathcal W_1$
 is $i\bR$-complete. 
 
Proposition~\ref{P:l-L-Omega}(b) with 
 $$
 A(xe_1,ye_1):=-xye_1,\qquad B((x_1,x_2),(y_1,y_2)):=\left(-x_1y_1-\frac{x_1y_2+y_1x_2}{2}\right)e_2
 $$ shows that $\mathcal W_2$
 is $i\bR$-complete. 

Proposition~\ref{P:l-L-Omega}(b) with 
$$
A(xe_1,ye_1):=-\rho x_1y_1e_1,\qquad B((x_1,x_2),(y_1,y_2))=\frac{1-\rho}{2}(x_1y_2+y_1x_2)e_2
$$ shows that $\mathcal W_3$ is  $i\bR$-complete.
If $x=x_1e_1+x_2e_2$ is a trajectory of $\mathcal W_j$, $j=1,2,3$, then $\dot x_1=cx^2_1$
must hold with $c\not=0$, hence $x_1$ is not holomorphic on $\bC$, so these fields are not $\bC$-complete.

Proposition  \ref{P:q-v}, with $n=2$,  $q(x,y)=-\frac1{2}(x_1y_2+y_1x_2)$, $v=(1,1)$, $u=(1,-1)$ $q(v,v)=-1$, $q(u,u)=1$ shows that $\mathcal W_4$
is $i\bR$-complete, since $q(v,v)q(u,u)=-1<0$.
On the other hand
 $W=\langle e_1\rangle$ is a characteristic direction of  $\mathcal W_4$ but the restriction is not the identically zero  vector field, so $\mathcal W_4$ is not $\bC$-complete. 
Let $x(t)=x_1(t)e_1+x_2(t)e_2$ be a trajectory of 
$\mathcal W_5$. Then $\dot x_2\equiv0$, hence $x_2=x_2(0)$ and
  $x_1$ must satisfy $\dot x_1=x_1^2-x_2(0)x_1$. By Proposition~\ref{P:t^2+a}(b) this vector field is not $i\bR$-complete on $\bR$. 
 
 Proposition  \ref{P:q-v}, with $n=2$, $v=e_1$, $u=e_2$, 
$q(x,y)=x_1y_1+x_2y_2$ implies that $\mathcal W_6$  is not $i\bR$-complete, since $q(v,v)q(u,u)=1>0$.
\end{proof}
From Proposition~\ref{P:ir-complete-dim2} and Corollary~\ref{c:ircomplete} we obtain:
\begin{cor}
 Let $G$ be a two dimensional real Lie group (i.e. $\bR^2$ or $\mathcal A^+(1,\bR)$ the orientation-preserving affine parametrizations of the real line) and $\beta_j:\mathfrak g\times\mathfrak g\ra \mathfrak g$ the symmetric bilinear map that is the polarization of the quadratic, homogeneous map identified with the vector field $\mathcal W_j$, $j=1,2,3,4$ in Proposition~\ref{P:ir-complete-dim2}.
 Then 
 \begin{equation}
 \nabla_xy:=\frac1{2}[x,y]-\beta_j(x,y) \qquad x,y\in\mathfrak g   
 \end{equation}
 defines a $\mathcal P$-complete left invariant Koszul connection on $TG$.
\end{cor}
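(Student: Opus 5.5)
The plan is to reduce the statement directly to Corollary~\ref{c:ircomplete}, with the $i\bR$-completeness input supplied by Proposition~\ref{P:ir-complete-dim2}. First fix a real basis $f_1,f_2$ of $\mathfrak g$. This gives a linear isomorphism $\mathfrak g\cong\bR^2$, $x=x_1f_1+x_2f_2\mapsto(x_1,x_2)$. Through this isomorphism the quadratic map $x\mapsto\mathcal W_j(x)$ of Proposition~\ref{P:ir-complete-dim2} becomes a homogeneous quadratic map $\mathfrak g\ra\mathfrak g$, and $\beta_j$ is by definition its polarization, so $\beta_j(x,x)=\mathcal W_j(x)$. Both possible groups, $\bR^2$ and $\mathcal A^+(1,\bR)$, are connected, so the connectedness hypothesis of Corollary~\ref{c:ircomplete} holds. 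The statement does not depend on which basis is chosen, and I would remark on why. A real linear change of coordinates $T$ carries trajectories of a vector field to trajectories of the transformed field. Since $T$ extends complex-linearly to $\bC^2$, it also carries holomorphic extensions near $i\bR$ to holomorphic extensions near $i\bR$. Hence $i\bR$-completeness is invariant under $GL(2,\bR)$.

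Next, set $\beta:=-\beta_j$. This is a symmetric bilinear map $\mathfrak g\times\mathfrak g\ra\mathfrak g$, and the given connection is $\nabla_xy=\frac1{2}[x,y]+\beta(x,y)$, which is exactly the form in Corollary~\ref{c:ircomplete}. The hypothesis to check there is that the vector field $x\mapsto-\beta(x,x)=\beta_j(x,x)$ is $i\bR$-complete. Equivalently, by Proposition~\ref{P:leftinvconn}, the Euler--Arnold vector field $\mathcal W(x)=-\alpha(x,x)$ is $i\bR$-complete, because the antisymmetric part $\frac1{2}[x,x]$ vanishes. In the chosen coordinates this field is precisely $\mathcal W_j$. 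For $j=1,2,3,4$, Proposition~\ref{P:ir-complete-dim2} states that $\mathcal W_j$ is $i\bR$-complete on $\bR^2$, where for $j=3$ one needs $\rho\ne0$ as assumed there.

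Corollary~\ref{c:ircomplete} then yields that $\nabla$ is $\mathcal P$-complete. Behind that corollary lies Theorem~\ref{T:nabla-i-complete}, which moreover gives $N_i(\eta,G_\bC)=TG$ together with the $G$-invariant ac-polarization $(\psi_\eta)^{-1}_*(T^{1,0}G_\bC)$. The only point needing care, and the one I expect to be the main obstacle, is the sign bookkeeping. The connection has $-\beta_j$, while the Euler--Arnold field is $-\alpha(x,x)$, so the two minus signs cancel and the field is $+\mathcal W_j$, not $-\mathcal W_j$. Even an error here would be harmless: $i\bR$-completeness is preserved under $\mathcal W\mapsto-\mathcal W$, since $t\mapsto c(-t)$ maps a neighborhood of $i\bR$ to a neighborhood of $i\bR$.
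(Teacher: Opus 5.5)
Your proposal is correct and is exactly the paper's argument: the paper derives the corollary by applying Corollary~\ref{c:ircomplete} with $\beta=-\beta_j$, so that the field to check is $-\beta(x,x)=\beta_j(x,x)=\mathcal W_j(x)$, and Proposition~\ref{P:ir-complete-dim2} supplies the $i\bR$-completeness of $\mathcal W_j$ for $j=1,2,3,4$. Your sign bookkeeping and your remarks on basis independence and connectedness are accurate additions, but they do not change the route.
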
  
\textbf{Declarations}

\textbf{Conflict of interest} On behalf of all authors the corresponding author states that there is no conflict of interest.

\textbf{Ethical approval} Not applicable.

\end{document}